\documentclass[12pt,a4paper]{article}
\usepackage[utf8]{inputenc}
\usepackage{amsmath}
\usepackage{breqn}
\usepackage{amsthm}
\usepackage{amsfonts}
\usepackage{amssymb} 
\usepackage{graphicx}
\usepackage[left=2cm,right=2cm,top=2cm,bottom=2cm]{geometry}
\usepackage[numbers]{natbib}
\usepackage{lineno,hyperref}
\modulolinenumbers[5]

\newtheorem{theorem}{Theorem}[section]
\newtheorem{corollary}{Corollary}[section]
\newtheorem{lemma}{Lemma}[section]

\newtheorem*{keyword}{Keywords}
\newtheorem*{remark}{Remark}
\newtheorem{example}{Example}[section]
\newtheorem*{definition}{Definition}

\newcommand*{\Scale}[2][4]{\scalebox{#1}{$#2$}}

\begin{document}
\title{Multiple Sums and Partition Identities}
\author{Roudy El Haddad \\
Universit\'e La Sagesse, Facult\'e de g\'enie, Polytech \\
\href{roudy1581999@live.com}{roudy1581999@live.com}}
\date{}
\maketitle

\begin{abstract}
Sums of the form $\sum_{q \leq N_1 < \cdots < N_m \leq n}{a_{(m);N_m}\cdots a_{(2);N_2}a_{(1);N_1}}$ date back to the sixteen century when Vi\`ete illustrated that the relation linking the roots and coefficients of a polynomial had this form. In more recent years, such sums have become increasingly used with a diversity of applications. In this paper, we develop formulae to help with manipulating such sums (which we will refer to as \textit{multiple sums}). We develop variation formulae that express the variation of multiple sums in terms of lower order multiple sums. Additionally, we derive a set of \textit{partition identities} that we use to prove a reduction theorem that expresses multiple sums as a combination of simple (non-recurrent) sums. We present a variety of applications including applications concerning polynomials and MZVs as well as a generalization of the binomial theorem. Finally, we establish the connection between multiple sums and a type of sums called recurrent sums. By exploiting this connection, we provide additional \textit{partition identities} for odd and even partitions. 
\end{abstract}
\begin{keyword}
{\em Multiple sums, Vi\`ete's Formula, Vi\`ete's Theorem, Polynomials, Partitions, Stirling numbers of the first kind, Multiple zeta values, Riemann zeta function, Bernoulli numbers, Faulhaber formula. } \\
\thanks{\bf{MSC 2020: }{primary 11P84 secondary 11B73, 11M32, 11C08}}
\end{keyword}
\section{Introduction and Notation} \label{intro}
The harmonic series is a divergent series as was proven independently by Nicole Oresme \cite{Oresme}, Pietro Mengoli \cite{Mengoli}, Johann Bernoulli \cite{JohannBernoulli}, and Jacob Bernoulli \cite{JacobBernoulli1,JacobBernoulli2}. 
However, elevating the terms to a power $s>1$, we obtain a convergent series. Sums of the form 
$$
\zeta(s)=\sum_{n=1}^{\infty}{\frac{1}{n^s}}
$$ 
where $s$ is a real number, were first studied by Euler. The case for $s=2$ is the celebrated Basel problem. Its fame came from the originality of the way Euler proved that $\zeta(2)=\frac{\pi^2}{6}$ \cite{Basel1,Basel2,Basel3}. More rigorous proofs were later developed \cite{Basel4}. By the method used to find $\zeta(2)$, Euler was able to obtain a general formula for this zeta function for positive even values of $s$. 
In 1859, a generalized form of $\zeta(s)$ where $s$ is a complex variable was introduced by Riemann in his article ``On the Number of Primes Less Than a Given Magnitude'' \cite{riemann1974appendix}. In recent years, mathematicians like Zagier \cite{zagier1995multiple}, Hoffman \cite{hoffman1992multiple}, and Granville \cite{granville1997decomposition} have introduced a generalized form of the zeta function which they call the multiple harmonic series (MHS) or multiple zeta values (MZV). However, interest in these sums dates back to when Euler studied the case of length 2 \cite{euler}. 

These sums/series have been of interest to mathematicians for a long time. Since the beginning of the 1990s, such series/sums have been heavily studied by mathematicians such as Hoffman and Zagier. Interest in such sums grew from their tremendous importance in Number Theory and numerous applications \cite{zagier1994values}. For example, the multiple harmonic series is directly related to the Riemann zeta function $\zeta(s)$ \cite{hoffman1992multiple,granville1997decomposition}. Interest in these series extends beyond mathematics. In fact, they appear in many fields of physics. The number $\zeta(\overline{6},\overline{2})$ appeared in the quantum field theory literature in 1986 \cite{broadhurst1986exploiting}. They are of fundamental importance for the connection of knot theory with quantum field theory \cite{broadhurst2013multiple,kassel2012quantum}. Multiple harmonic sums became even more important when higher order calculations in quantum electrodynamics (QED) and quantum chromodynamics (QCD) started needing them \cite{blumlein1999harmonic,blumlein2010multiple}. 

A multiple harmonic series (MHS) or multiple zeta values (MZV) is defined as: 
$$
\zeta(s_1,s_2, \ldots, s_k)
=\sum_{1 \leq N_1 < N_2 < \cdots < N_k}{\frac{1}{N_1^{s_1}N_2^{s_2}\cdots N_k^{s_k}}}
.$$ 
Its partial sum is written as: 
$$
\zeta_{n}(s_1,s_2, \ldots, s_k)
=\sum_{1 \leq N_1 < N_2 < \cdots < N_k \leq n}{\frac{1}{N_1^{s_1}N_2^{s_2}\cdots N_k^{s_k}}}
.$$ 
These sums are a particular case of what we called multiple sums as they are of the form $\sum_{1\leq N_1 < \cdots < N_m \leq n}{a_{(m);N_m} \cdots a_{(1);N_1}}$ with $a_{(i);N_i}=\frac{1}{N_i^{s_i}}$ for all $i$. Note, however, that while this particular case has been heavily studied, the general case has received much less attention. Meanwhile, there are thousands of formulae for multiple harmonic sums/series in the literature, not much can be found for general multiple sums/series. In this article, we derive some essential formulae for dealing with general multiple sums. 
The aim of this paper is to develop methods/formulae that allow us to better the way we work with multiple sums. We will develop formulae to calculate the variation of such sums as well as formulae to express multiple sums in terms of simple sums. Partition identities needed to prove these formulae as well as partition identities that can be derived from these formulae will be presented. These identities include sums over partitions involving Bernoulli numbers and the zeta function as well as a new definition of binomial coefficients as a sum over partitions. Likewise, by complementing this work with that done in \cite{RecurrentSums}, we will extend the partition identities of this article to sums over odd and even partitions. As a matter of fact, multiple sums are intimately related to recurrent sums (presented in \cite{RecurrentSums}) as we will show in this paper. 

Additionally, note that like the MHS, the general multiple sum structure is not recent, it goes back to the 17-th century. In 1646, Vi\`ete proved that a polynomial can be represented as a product of factors (Vi\`ete’s theorem) as well as he developed the relations linking the coefficients of a polynomial to its roots for positive roots (Vi\`ete’s formula) \cite{Viete}. Vi\`ete’s formula was later proven to hold for any roots or coefficients by A. Girard \cite{Girard}. The type of sums represented by Vi\`ete’s formula is a general multiple sum. Hence, according to Vi\`ete's formula, multiple sums are fundamental for linking the roots and coefficients of a polynomial. 

The importance of this article is based on how it improves our ability to study sums of this form. The theorems presented in this paper can be used to develop new theorems involving multiple sums or to improve upon previously obtained results as we will illustrate in this article. Applications include generalizing the Faulhaber formula for the sum of powers to a formula for the multiple sum of powers. They also include simplifying the relation between roots and coefficients of a polynomial as well as linking the roots of a polynomial to those of its derivatives. And probably, most importantly, illustrating the behavior of MZVs as the number of sums goes to infinity. 
The partition identities are also a major part of the importance of this paper. This article includes partition identities not only for partitions in general but also for odd and even partitions. These identities could be key in deriving new theorems involving odd or even partitions.

Now let us define a notation for multiple sums which we will use in the remainder of this paper: For any $m,q,n \in \mathbb{N}$ where $n \geq q+m-1$ and for any set of sequences $a_{(1);N_1},\ldots,a_{(m);N_m}$ defined in the interval $[q,n]$, let $P_{m,q,n}(a_{(1);N_1},\ldots,a_{(m);N_m})$ represent the general multiple sum of order $m$ for the sequences $a_{(1);N_1},\ldots,a_{(m);N_m}$ with lower and upper bounds respectively $q$ and $n$. For simplicity, however, we will denote it simply as $P_{m,q,n}$.
\begin{equation} \label{eq1}
\begin{split}
P_{m,q,n}
&=\sum_{q \leq N_1 < \cdots < N_m \leq n}{a_{(m);N_m}\cdots a_{(2);N_2}a_{(1);N_1}}\\
&=\sum_{N_m=q+m-1}^{n}{\cdots \sum_{N_2=q+1}^{N_3-1}{\sum_{N_1=q}^{N_2-1}{a_{(m);N_m}\cdots a_{(2);N_2}a_{(1);N_1}}}} \\
&=\sum_{N_m=q+m-1}^{n}{a_{(m);N_m} \cdots \sum_{N_2=q+1}^{N_3-1}{a_{(2);N_2}\sum_{N_1=q}^{N_2-1}{a_{(1);N_1}}}} 
.\end{split}
\end{equation}
The most common case of a multiple sum is that where all sequences are the same, 
\begin{equation} \label{eq2}
\begin{split}
P_{m,q,n}(a_{N_1}, \ldots , a_{N_m})
&=\sum_{q \leq N_1 < \cdots < N_m \leq n}{a_{N_m}\cdots a_{N_2}a_{N_1}}\\
&=\sum_{N_m=q+m-1}^{n}{\cdots \sum_{N_2=q+1}^{N_3-1}{\sum_{N_1=q}^{N_2-1}{a_{N_m}\cdots a_{N_2}a_{N_1}}}} \\
&=\sum_{N_m=q+m-1}^{n}{a_{N_m} \cdots \sum_{N_2=q+1}^{N_3-1}{a_{N_2}\sum_{N_1=q}^{N_2-1}{a_{N_1}}}} 
.\end{split}
\end{equation}
For simplicity, we will denote it as $\hat{P}_{m,q,n}$.
\begin{remark}
{\em
Knowing that adding zeros to a sum does not change the sum and noticing that for $N_2=q$, we get $q \leq N_1 \leq N_2 - 1 =q-1$ which would lead to an empty sum for this value. Hence, we can start $N_2$ at $q$. Similarly, for $N_3=q,q+1$ or $\cdots$ or $N_{m}=q, \ldots, q+m-2$, all would lead to zeros. Hence, we could start all the variables at $q$. }
$$
P_{m,q,n}
=\sum_{N_m=q}^{n}{\cdots \sum_{N_2=q}^{N_3-1}{\sum_{N_1=q}^{N_2-1}{a_{(m);N_m}\cdots a_{(2);N_2}a_{(1);N_1}}}}
.$$
\end{remark}
\begin{remark}
{\em If $m > n-q+1$ $($or $n<q+m-1)$, the multiple sum can still be considered defined and will be zero $(P_{m,q,n}=0)$. }
\end{remark}
\begin{remark}
{\em
A multiple sum of order 0 is always equal to $1$ $(P_{0,i,j}=1, \forall i,j \in \mathbb{N})$. It is not equivalent to an empty sum (which is equal to $0$). }
\end{remark}
\begin{remark}
{\em From these expressions, we can clearly see that it is directly related to recurrent sums \cite{RecurrentSums}. }
\end{remark} 
In this paper, multiple sums will be studied. In Section \ref{Variation Formulas}, formulas for the calculation of variation of these sums in terms of lower order multiple sums will be presented. Then, in Section \ref{Reduction Formulas}, we will present a reduction formula that allows the representation of a multiple sum as a combination of simple (non-recurrent) sums. In section \ref{ApplicationstoPolynomials}, the relations developed will be applied to Vi\`ete's formula in order to simplify the relation linking the coefficients of a polynomial to its roots. Additionally, some theorems related to polynomials will be developed. A generalization of the binomial theorem will also be developed. 
In section \ref{Applications}, the reduction theorem will be used to calculate certain special sums such as the multiple harmonic sum and the multiple power sum. In section \ref{Relation to Recurrent Sums and Odd-Even Partitions}, we investigate the relation between recurrent sums and multiple sums then, using these links, we derive some odd and even partition identities.     
\section{Variation Formulas} \label{Variation Formulas}
In this section, we will develop formulas to express the variation of a multiple sum of order $m$ ($P_{m,q,n+1}-P_{m,q,n}$) in terms of lower order multiple sums. Equivalently, these formulas can be used to express $P_{m,q,n+1}$ in terms of $P_{m,q,n}$ and lower order multiple sums. 
\subsection{Simple expression}
We begin by presenting the simplest case of the variation formula in Lemma \ref{Lemma 2.1}. This basic form is needed in order to prove the general form. 
\begin{lemma} \label{Lemma 2.1}
For any $m,q,n \in \mathbb{N}$ where $n \geq q+m-1$, we have that 
$$P_{m,q,n+1}=P_{m,q,n}+a_{(m);n+1} P_{m-1,q,n}.$$
\end{lemma}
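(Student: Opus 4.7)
The plan is to prove this by splitting the outermost summation over $N_m$ in $P_{m,q,n+1}$ according to whether $N_m = n+1$ or $N_m \leq n$. This is the natural analogue of the standard trick for peeling off the last term of a simple sum, now applied to the outermost index of a multiple sum.

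First I would write out $P_{m,q,n+1}$ using the form from equation \eqref{eq1} in which the outer sum ranges over $N_m$ from $q+m-1$ to $n+1$:
\begin{equation*}
P_{m,q,n+1}
=\sum_{N_m=q+m-1}^{n+1}{a_{(m);N_m} \sum_{N_{m-1}=q+m-2}^{N_m-1}\cdots \sum_{N_1=q}^{N_2-1}{a_{(m-1);N_{m-1}}\cdots a_{(1);N_1}}}.
\end{equation*}
Then I would split this outer sum as $\sum_{N_m=q+m-1}^{n+1} = \sum_{N_m=q+m-1}^{n} + (\text{term at } N_m=n+1)$.

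For the first piece, the constraint $q \leq N_1 < \cdots < N_m \leq n$ is recovered exactly, so this piece equals $P_{m,q,n}$. For the second piece, setting $N_m=n+1$ pulls out the factor $a_{(m);n+1}$, and the remaining nested sum ranges over $q \leq N_1 < \cdots < N_{m-1} \leq n$ with integrand $a_{(m-1);N_{m-1}}\cdots a_{(1);N_1}$, which is precisely $P_{m-1,q,n}$ formed from the sequences $a_{(1);N_1},\ldots,a_{(m-1);N_{m-1}}$. Adding the two pieces gives the claimed identity.

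There is essentially no obstacle: the only thing to be slightly careful about is the edge case where $n = q+m-1$, so that the ``$N_m \leq n$'' piece is a single term (or could be interpreted as empty if $m > n-q+1$); here the first remark after the definition of $P_{m,q,n}$ and the convention $P_{m,q,n}=0$ when $n<q+m-1$ ensure the formula still holds. Similarly, when $m=1$, the factor $P_{m-1,q,n} = P_{0,q,n} = 1$ by convention, reducing the identity to the familiar $\sum_{k=q}^{n+1}a_{(1);k} = \sum_{k=q}^{n}a_{(1);k} + a_{(1);n+1}$, confirming the base case.
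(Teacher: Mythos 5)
Your proposal is correct and uses exactly the decomposition the paper itself employs: splitting the sum according to whether $N_m = n+1$ or $N_m \leq n$, with the first piece giving $P_{m,q,n}$ and the second factoring out $a_{(m);n+1}$ to leave $P_{m-1,q,n}$. The only cosmetic difference is that you phrase the split in terms of the iterated-sum form while the paper works directly with the inequality-chain notation; your extra attention to the edge cases $n=q+m-1$ and $m=1$ is sound but not needed beyond the stated conventions.
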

\begin{proof}
\begin{equation*} 
\begin{split}
P_{m,q,n+1}
&=\sum_{q \leq N_1 < \cdots <N_m \leq n+1}{a_{(m);N_m} \cdots a_{(1);N_1}} \\
&=\sum_{q \leq N_1 < \cdots <N_m < n+1}{a_{(m);N_m} \cdots a_{(1);N_1}}+\sum_{q \leq N_1 < \cdots N_{m-1}<N_m = n+1}{a_{(m);N_m} \cdots a_{(1);N_1}} \\
&=\sum_{q \leq N_1 < \cdots <N_m \leq n}{a_{(m);N_m} \cdots a_{(1);N_1}} +a_{(m);n+1} \sum_{q \leq N_1 < \cdots <N_{m-1} \leq n}{a_{(m-1);N_{m-1}} \cdots a_{(1);N_1}}\\
&=P_{m,q,n}+a_{(m);n+1} P_{m-1,q,n}
.\end{split}
\end{equation*}
\end{proof}
Based upon Lemma \ref{Lemma 2.1}, a more generalized version of the variation formula can be developed which allows the representation of $P_{m,q,n+1}$ in terms of $P_{m,q,n}$ and lower order multiple sums (from order $0$ to $(m-1)$). 
\begin{theorem} \label{Theorem 2.1}
For any $m,q,n \in \mathbb{N}$ where $n \geq q+m-1$ and for any set of sequences $a_{(1);N_1},\ldots,a_{(m);N_m}$ defined in the interval $[q,n+1]$, we have that 
\begin{dmath*}
\sum_{q \leq N_1 < \cdots <N_m \leq n+1}{a_{(m);N_m} \cdots a_{(1);N_1}} 
=\sum_{k=0}^{m}{\left(\prod_{j=0}^{m-k-1}{a_{(m-j);n+1-j}}\right)\left(\sum_{q \leq N_1 < \cdots <N_{k} \leq n-m+k}{a_{(k);N_{k}} \cdots a_{(1);N_1}} \right)}
.\end{dmath*}
Using the notation from Eq.~\eqref{eq1}, this theorem can be written as 
$$
P_{m,q,n+1}
=\sum_{k=0}^{m}{\left(\prod_{j=0}^{m-k-1}{a_{(m-j);n+1-j}}\right) P_{k,q,n-m+k}}
.$$
\end{theorem}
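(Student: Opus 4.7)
The plan is to proceed by induction on the order $m$, using Lemma \ref{Lemma 2.1} as the single-step recursion. The base case $m = 0$ is trivial: both sides equal $1$ (on the right, only the $k = 0$ term survives, with empty product and $P_{0,q,n} = 1$), and the case $m = 1$ is exactly Lemma \ref{Lemma 2.1} itself.

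For the inductive step, I would start by applying Lemma \ref{Lemma 2.1} to write $P_{m,q,n+1} = P_{m,q,n} + a_{(m);n+1}\,P_{m-1,q,n}$, and then feed the inductive hypothesis (at order $m-1$, upper bound $n$; the assumption $n \geq q + m - 1$ ensures $n - 1 \geq q + (m-1) - 1$, so this application is legal) into the second summand, yielding $P_{m-1,q,n} = \sum_{k=0}^{m-1}\bigl(\prod_{j=0}^{m-k-2} a_{(m-1-j);\, n-j}\bigr) P_{k,q,n-m+k}$.

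The crux is then a careful reindexing of the inner product. Setting $j' = j + 1$ transforms $a_{(m-1-j);\, n-j}$ into $a_{(m-j');\, n+1-j'}$ and shifts the range to $j' = 1, \ldots, m-k-1$. Multiplying through by $a_{(m);n+1} = a_{(m-0);\, n+1-0}$ prepends the missing $j' = 0$ factor and extends the product to the full range $j' = 0, \ldots, m-k-1$, contributing precisely the $k = 0, 1, \ldots, m-1$ terms of the target formula. The leftover $P_{m,q,n}$ from Lemma \ref{Lemma 2.1} supplies the $k = m$ term (with empty product), completing the identity.

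The main obstacle, as is often the case with compact product-and-sum identities, is simply keeping the two indices (the \emph{sequence label} $m - j$ and the \emph{argument} $n + 1 - j$) in lockstep through the substitution $j' = j + 1$; once that alignment is verified and the $k = m$ boundary contribution is correctly matched to $P_{m,q,n}$, the proof reduces to tidy index manipulation with no hidden combinatorial content.
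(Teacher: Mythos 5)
Your proposal is correct and follows essentially the same route as the paper: induction on $m$ with Lemma \ref{Lemma 2.1} as the one-step recursion, application of the inductive hypothesis at the lower order with the upper bound shifted from $n+1$ to $n$, and the reindexing $j \mapsto j+1$ that lets the factor $a_{(m);n+1}$ extend the product to the full range while $P_{m,q,n}$ supplies the $k=m$ term. The only cosmetic difference is that the paper phrases the step as passing from $m$ to $m+1$ rather than from $m-1$ to $m$, and starts the base case at $m=1$ instead of $m=0$.
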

\begin{proof}
$ \\ $
1. Base Case: verify true for $m=1$.
\begin{equation*}
\begin{split}
\sum_{k=0}^{1}{\left(\prod_{j=0}^{-k}{a_{(1-j);n+1-j}}\right) P_{k,q,n-1+k}}
&=\left(\prod_{j=0}^{0}{a_{(1-j);n+1-j}}\right) P_{0,q,n-1}
+\left(\prod_{j=0}^{-1}{a_{(1-j);n+1-j}}\right) P_{1,q,n}\\
&=(a_{(1);n+1})(1)+(1)\sum_{q \leq N_1 \leq n}{a_{(1);n+1}}\\
&=\sum_{q \leq N_1 \leq n+1}{a_{(1);n+1}}\\
&=P_{1,q,n+1} 
.\end{split}
\end{equation*}
2. Induction hypothesis: assume the statement is true until $m$. 
$$
P_{m,q,n+1}
=\sum_{k=0}^{m}{\left(\prod_{j=0}^{m-k-1}{a_{(m-j);n+1-j}}\right) P_{k,q,n-m+k}}
.$$
3. Induction step: we will show that this statement is true for ($m+1$).\\
We have to show the following statement to be true: 
$$
P_{m+1,q,n+1}
=\sum_{k=0}^{m+1}{\left(\prod_{j=0}^{m-k}{a_{(m+1-j);n+1-j}}\right) P_{k,q,n-m-1+k}}
.$$
$ \\ $
From Lemma \ref{Lemma 2.1}, 
$$P_{m+1,q,n+1}=P_{m+1,q,n}+a_{(m+1);n+1} P_{m,q,n}.$$
By applying the induction hypothesis for $n$ instead of $(n+1)$, 
\begin{equation*}
\begin{split}
P_{m+1,q,n+1}
&=P_{m+1,q,n}+a_{(m+1);n+1}\sum_{k=0}^{m}{\left(\prod_{j=0}^{m-k-1}{a_{(m-j);n-j}}\right) P_{k,q,n-1-m+k}}\\
&=P_{m+1,q,n}+a_{(m+1);n+1}\sum_{k=0}^{m}{\left(\prod_{j=1}^{m-k}{a_{(m+1-j);n+1-j}}\right) P_{k,q,n-1-m+k}}\\
&=P_{m+1,q,n}+\sum_{k=0}^{m}{\left(\prod_{j=0}^{m-k}{a_{(m+1-j);n+1-j}}\right) P_{k,q,n-m-1+k}} 
.\end{split}
\end{equation*}
Noticing that 
$$
\sum_{k=m+1}^{m+1}{\left(\prod_{j=0}^{m-k}{a_{(m+1-j);n+1-j}}\right) P_{k,q,n-m-1+k}}
=P_{m+1,q,n}
.$$
Hence, by substituting back, we get the desired relation. \\
The case for ($m+1$) is proven. Hence, the theorem is proven by induction. 
\end{proof}
\begin{corollary} \label{Corollary 2.1}
If all sequences are the same, Theorem \ref{Theorem 2.1} becomes 
\begin{dmath*}
\sum_{q \leq N_1 < \cdots <N_m \leq n+1}{a_{N_m} \cdots a_{N_1}} 
=\sum_{k=0}^{m}{\left(\prod_{j=0}^{m-k-1}{a_{n+1-j}}\right)\left(\sum_{q \leq N_1 < \cdots <N_{k} \leq n-m+k}{a_{N_{k}} \cdots a_{N_1}} \right)}
.\end{dmath*}
Using the notation from Eq.~\eqref{eq2}, this theorem can be written as 
$$
\hat{P}_{m,q,n+1}
=\sum_{k=0}^{m}{\left(\prod_{j=0}^{m-k-1}{a_{n+1-j}}\right) \hat{P}_{k,q,n-m+k}}
.$$
\end{corollary}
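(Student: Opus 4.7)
The plan is to deduce Corollary \ref{Corollary 2.1} as an immediate specialization of Theorem \ref{Theorem 2.1}, since the only difference between the two statements is notational: Corollary \ref{Corollary 2.1} is the case in which all the indexed sequences $a_{(1);N_1},\ldots,a_{(m);N_m}$ coincide with a single sequence $a_{N}$.

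Concretely, I would start from the identity proved in Theorem \ref{Theorem 2.1},
$$
P_{m,q,n+1}
=\sum_{k=0}^{m}\left(\prod_{j=0}^{m-k-1}{a_{(m-j);n+1-j}}\right) P_{k,q,n-m+k},
$$
and then substitute $a_{(i);N_i}=a_{N_i}$ for every $i\in\{1,\ldots,m\}$. Under this substitution, the factor $a_{(m-j);n+1-j}$ collapses to $a_{n+1-j}$ (the sequence label becomes irrelevant), so the inner product becomes $\prod_{j=0}^{m-k-1}a_{n+1-j}$. Simultaneously, $P_{m,q,n+1}$ becomes $\hat{P}_{m,q,n+1}$ and each $P_{k,q,n-m+k}$ becomes $\hat{P}_{k,q,n-m+k}$ by the definitions in Eq.~\eqref{eq1} and Eq.~\eqref{eq2}. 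Rewriting the result in the sum form $\sum_{q\le N_1<\cdots<N_k\le n-m+k}a_{N_k}\cdots a_{N_1}$ then yields exactly the first displayed equation of the corollary.

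There is essentially no obstacle here: the argument is purely a relabeling of the result from Theorem \ref{Theorem 2.1}. If one wished to avoid invoking the theorem, an alternative would be to repeat the induction-on-$m$ argument verbatim, using the specialization of Lemma \ref{Lemma 2.1} that reads $\hat{P}_{m,q,n+1}=\hat{P}_{m,q,n}+a_{n+1}\hat{P}_{m-1,q,n}$; however, this is strictly redundant. The only minor point worth being careful about is that the product index $j$ on $a_{(m-j);n+1-j}$ runs over both the sequence label and the upper-bound shift, and one must confirm that after substitution only the second index survives—which it does trivially.
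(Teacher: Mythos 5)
Your proposal is correct and matches the paper's treatment: the corollary is stated as an immediate specialization of Theorem \ref{Theorem 2.1} obtained by setting $a_{(i);N_i}=a_{N_i}$ for all $i$, with no further argument given or needed. The point you flag about the product index $j$ appearing in both the sequence label and the bound shift is handled exactly as you describe.
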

\begin{example}
{\em Consider that $m=2$, we have the two following cases: }
\begin{itemize}
\item {\em If all sequences are distinct, } 
$$
\sum_{q \leq N_1 < N_2 \leq n+1}{b_{N_2}a_{N_1}}
-\sum_{q \leq N_1 < N_2 \leq n}{b_{N_2}a_{N_1}}
=(b_{n+1})\sum_{q \leq N_1 \leq n-1}{a_{N_1}}+(b_{n+1})(a_{n})
.$$
\item {\em If all sequences are the same, } 
$$
\sum_{q \leq N_1 < N_2 \leq n+1}{a_{N_2}a_{N_1}}
-\sum_{q \leq N_1 < N_2 \leq n}{a_{N_2}a_{N_1}}
=(a_{n+1})\sum_{q \leq N_1 \leq n-1}{a_{N_1}}+(a_{n+1})(a_{n})
.$$
\end{itemize}
\end{example}
\subsection{Simple recurrent expression}
Theorem \ref{Theorem 2.1} can be rewritten in a recursive way as illustrated by the following theorem.  
\begin{theorem} \label{Theorem 2.2}
For any $m,q,n \in \mathbb{N}$ where $n \geq q+m-1$ and for any set of sequences $a_{(1);N_1},\ldots,a_{(m);N_m}$ defined in the interval $[q,n+1]$, we have that 
\begin{dmath*}
\sum_{q \leq N_1 < \cdots <N_m \leq n+1}{a_{(m);N_m} \cdots a_{(1);N_1}} 
-\sum_{q \leq N_1 < \cdots <N_m \leq n}{a_{(m);N_m} \cdots a_{(1);N_1}} 
=a_{(m);n+1}\left\{a_{(m-1);n} \left[ \cdots a_{(2);n-m+3} \left( a_{(1);n-m+2}(1)+\sum_{q \leq N_1 \leq n-m+1}{a_{(1);N_1}} \right)+\sum_{q \leq N_1 <N_2 \leq n-m+2}{a_{(2);N_2} a_{(1);N_1}}\right] +\sum_{q \leq N_1 < \cdots < N_{m-1} \leq n-1}{a_{(m-1);N_{m-1}}\cdots a_{(1);N_1}} \right\}.
\end{dmath*}
Using the notation from Eq.~\eqref{eq1}, this theorem can be written as 
\begin{dmath*}
P_{m,q,n+1}
=a_{(m);n+1}\left \{ a_{(m-1);n}\left[ \cdots a_{(2);n-m+3} \left( a_{(1);n-m+2} \left( P_{0,q,n-m+1} \right) + P_{1,q,n-m+1} \right) + P_{2,q,n-m+2} \right] + P_{m-1,q,n-1} \right \} + P_{m,q,n}
\end{dmath*}
where $P_{0,q,n-m+1}=1$. 
\end{theorem}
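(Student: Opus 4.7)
The plan is to prove Theorem \ref{Theorem 2.2} by repeatedly applying Lemma \ref{Lemma 2.1}, peeling one layer off the nested expression at each step. A clean way to organize this is an induction on $m$: the base case $m=1$ reduces to $P_{1,q,n+1} = a_{(1);n+1}\,P_{0,q,n} + P_{1,q,n}$, which is exactly Lemma \ref{Lemma 2.1} under the convention $P_{0,q,n}=1$, and the inductive step uses Lemma \ref{Lemma 2.1} once more to feed the induction hypothesis.

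Concretely, for the induction step I would first invoke Lemma \ref{Lemma 2.1} to obtain
$$P_{m,q,n+1} = P_{m,q,n} + a_{(m);n+1}\,P_{m-1,q,n},$$
which already places the trailing $P_{m,q,n}$ in the position demanded by the theorem. I would then apply the induction hypothesis to $P_{m-1,q,n} = P_{m-1,q,(n-1)+1}$, expanding it into the fully nested form
$$P_{m-1,q,n} = a_{(m-1);n}\bigl\{\cdots a_{(2);n-m+3}\bigl(a_{(1);n-m+2}\,P_{0,q,n-m+1} + P_{1,q,n-m+1}\bigr) + P_{2,q,n-m+2}\bigr\} + P_{m-1,q,n-1}.$$
Substituting this back immediately yields the claimed formula: the outer factor $a_{(m);n+1}$ multiplies the entire brace produced by the hypothesis (whose trailing summand is precisely the $P_{m-1,q,n-1}$ advertised in the theorem), and the $+P_{m,q,n}$ remains outside. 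Equivalently, one can bypass the induction and simply iterate Lemma \ref{Lemma 2.1}, rewriting $P_{m-j,q,n-j+1}$ as $P_{m-j,q,n-j} + a_{(m-j);n-j+1}\,P_{m-j-1,q,n-j}$ at stage $j$, with each residual $P_{m-j,q,n-j}$ taking its claimed place as the trailing summand of the next bracket outward, and terminating at the innermost parenthesis via $P_{1,q,n-m+2} = a_{(1);n-m+2}\,P_{0,q,n-m+1} + P_{1,q,n-m+1}$.

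The only real obstacle is bookkeeping with the indices. The shift $(m,n+1) \to (m-1,n)$ in the induction hypothesis sends each sequence subscript $n+1-j$ down to $n-j$ and each internal $P_k$-bound $n-m+k$ down to $(n-1)-(m-1)+k$, which simplifies back to $n-m+k$. Hence the sequence indices $a_{(m-1);n}, a_{(m-2);n-1}, \ldots, a_{(1);n-m+2}$ and the inner $P_k$-bounds $P_{0,q,n-m+1}$, $P_{1,q,n-m+1}$, $P_{2,q,n-m+2}, \ldots$ produced by the hypothesis coincide exactly with the positions shown inside the brace of Theorem \ref{Theorem 2.2}. Once this alignment is checked, no further computation beyond the single application of Lemma \ref{Lemma 2.1} is required.
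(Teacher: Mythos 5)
Your proof is correct and follows essentially the same route as the paper's: induction on $m$, with the single application of Lemma \ref{Lemma 2.1} giving $P_{m,q,n+1}=P_{m,q,n}+a_{(m);n+1}P_{m-1,q,n}$ and the induction hypothesis (applied at order $m-1$ with upper bound $n$ in place of $n+1$) expanding $P_{m-1,q,n}$ into the nested brace. Your explicit check that the index shift sends $n-m+k$ to $(n-1)-(m-1)+k=n-m+k$ is a welcome detail the paper leaves implicit.
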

\begin{proof}
$ \\ $
1. Base Case: verify true for $m=1$. \\
From Lemma \ref{Lemma 2.1} for $m=1$, 
$$
P_{1,q,n+1}=P_{1,q,n}+a_{(1);n+1}P_{0,q,n}
.$$
2. Induction Hypothesis: assume the statement is true until $m$. 
\begin{dmath*}
P_{m,q,n+1}
=a_{(m);n+1}\left \{ a_{(m-1);n}\left[ \cdots a_{(2);n-m+3} \left( a_{(1);n-m+2} \left( P_{0,q,n-m+1} \right) + P_{1,q,n-m+1} \right) + P_{2,q,n-m+2} \right] + P_{m-1,q,n-1} \right \} + P_{m,q,n}.
\end{dmath*}
3. Induction Step: we will show that this statement is true for $(m+1)$. \\
We have to show the following statement to be true: 
\begin{dmath*}
P_{m+1,q,n+1}
=a_{(m+1);n+1}\left \{ a_{(m);n}\left[ \cdots a_{(2);n-m+2} \left( a_{(1);n-m+1} \left( P_{0,q,n-m} \right) + P_{1,q,n-m} \right) + P_{2,q,n-m+1} \right] + P_{m,q,n-1} \right \} + P_{m+1,q,n}.
\end{dmath*}
$ \\ $ 
From Lemma \ref{Lemma 2.1}, 
$$P_{m+1,q,n+1}=P_{m+1,q,n}+a_{(m+1);n+1} P_{m,q,n}.$$ 
By applying the induction hypothesis with $n$ instead of $n+1$, we get the desired theorem, 
\begin{dmath*}
P_{m+1,q,n+1}
=a_{(m+1);n+1}\left \{ a_{(m);n}\left[ \cdots a_{(2);n-m+2} \left( a_{(1);n-m+1} \left( P_{0,q,n-m} \right) + P_{1,q,n-m} \right) + P_{2,q,n-m+1} \right] + P_{m,q,n-1} \right \} + P_{m+1,q,n}.
\end{dmath*}
The case for ($m+1$) is proven. Hence, the theorem is proven by induction. 
\end{proof}
\begin{corollary} \label{Corollary 2.2}
If all sequences are the same, Theorem \ref{Theorem 2.2} becomes 
\begin{dmath*}
\sum_{q \leq N_1 < \cdots <N_m \leq n+1}{a_{N_m} \cdots a_{N_1}} 
-\sum_{q \leq N_1 < \cdots <N_m \leq n}{a_{N_m} \cdots a_{N_1}} 
=a_{n+1}\left\{a_{n} \left[ \cdots a_{n-m+3} \left( a_{n-m+2}(1)+\sum_{q \leq N_1 \leq n-m+1}{a_{N_1}} \right)+\sum_{q \leq N_1 <N_2 \leq n-m+2}{a_{N_2} a_{N_1}}\right] +\sum_{q \leq N_1 < \cdots < N_{m-1} \leq n-1}{a_{N_{m-1}}\cdots a_{N_1}} \right\}
.\end{dmath*}
Using the notation from Eq.~\eqref{eq2}, this theorem can be written as 
\begin{dmath*}
\Scale[0.9]{
\hat{P}_{m,q,n+1}
=a_{n+1}\left \{ a_{n}\left[ \cdots a_{n-m+3} \left( a_{n-m+2} \left( \hat{P}_{0,q,n-m+1} \right) + \hat{P}_{1,q,n-m+1} \right) + \hat{P}_{2,q,n-m+2} \right] + \hat{P}_{m-1,q,n-1} \right \} + \hat{P}_{m,q,n}}
\end{dmath*}
where $\hat{P}_{0,q,n-m+1}=1$.
\end{corollary}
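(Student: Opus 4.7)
The plan is to obtain Corollary \ref{Corollary 2.2} as an immediate specialization of Theorem \ref{Theorem 2.2}, which has already been proven in full generality. Since Theorem \ref{Theorem 2.2} holds for any set of sequences $a_{(1);N_1}, \ldots, a_{(m);N_m}$ defined on $[q,n+1]$, I would simply set $a_{(i);N_i} = a_{N_i}$ for every index $i \in \{1,\ldots,m\}$. Under this substitution, by Eq.~\eqref{eq2} the general multiple sum $P_{k,q,n}$ collapses to $\hat{P}_{k,q,n}$, and each factor $a_{(i);n+1-j}$ appearing in the nested expression reduces to $a_{n+1-j}$. Substituting term by term into the statement of Theorem \ref{Theorem 2.2} yields exactly the identity claimed in Corollary \ref{Corollary 2.2}.

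As an alternative, one could give a self-contained inductive proof that mirrors the argument for Theorem \ref{Theorem 2.2}. The base case $m=1$ follows from the same-sequence specialization of Lemma \ref{Lemma 2.1}, which reads $\hat{P}_{1,q,n+1} = \hat{P}_{1,q,n} + a_{n+1}\hat{P}_{0,q,n}$. For the inductive step, assuming the formula holds up to $m$, write
$$\hat{P}_{m+1,q,n+1} = \hat{P}_{m+1,q,n} + a_{n+1}\,\hat{P}_{m,q,n}$$
and apply the inductive hypothesis to $\hat{P}_{m,q,n}$ with $n-1$ in place of $n$ (which is valid whenever $n-1 \geq q+m-1$). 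The outer factor $a_{n+1}$ multiplies into every piece of the nested bracket, and shifting the indices of the inner sums gives the desired expression for $\hat{P}_{m+1,q,n+1}$.

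The only real work is bookkeeping: keeping track of which upper bound corresponds to which $\hat{P}_{k,q,\cdot}$ inside the nested brackets, and verifying that the telescoping of the $a$-factors inside the braces lines up correctly after the index shift. Since Theorem \ref{Theorem 2.2} already carries out exactly this bookkeeping for the general case, the cleanest presentation is to cite it directly and observe that the collapsing of sequences to a single sequence is a purely notational change. There is no conceptual obstacle, and the proof should occupy only a couple of lines.
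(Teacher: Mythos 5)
Your proposal is correct and matches the paper's treatment: the paper states Corollary \ref{Corollary 2.2} without a separate proof, treating it exactly as you do — an immediate specialization of Theorem \ref{Theorem 2.2} obtained by setting $a_{(i);N_i}=a_{N_i}$ for all $i$, under which every $P_{k,q,\cdot}$ becomes $\hat{P}_{k,q,\cdot}$ and every factor $a_{(i);n+1-j}$ becomes $a_{n+1-j}$. The alternative inductive argument you sketch is also sound but unnecessary, since the general theorem already carries the full content.
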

\begin{example}
{\em Consider that $m=2$, we have the two following cases: }
\begin{itemize}
\item {\em If all sequences are distinct,  }
$$
\sum_{q \leq N_1 < N_2 \leq n+1}{b_{N_2}a_{N_1}}
-\sum_{q \leq N_1 < N_2 \leq n}{b_{N_2}a_{N_1}}
=(b_{n+1}) \left\{ \sum_{q \leq N_1 \leq n-1}{a_{N_1}}+a_{n}(1) \right\}
.$$
\item {\em If all sequences are the same,  }
$$
\sum_{q \leq N_1 < N_2 \leq n+1}{a_{N_2}a_{N_1}}
-\sum_{q \leq N_1 < N_2 \leq n}{a_{N_2}a_{N_1}}
=(a_{n+1}) \left\{ \sum_{q \leq N_1 \leq n-1}{a_{N_1}}+a_{n}(1) \right\}
.$$
\end{itemize}
\end{example}
\subsection{General expression}
In order to represent the variation of a multiple sum of order $m$ ($P_{m,q,n+1}-P_{m,q,n}$) only in terms of multiple sums of order going from $p$ to $(m-1)$, a more general form of Theorem \ref{Theorem 2.1} can be developed.  
\begin{theorem} \label{Theorem 2.3}
For any $m,q,n \in \mathbb{N}$ where $n \geq q+m-1$, for any $p \in [0,m]$, and for any set of sequences $a_{(1);N_1},\ldots,a_{(m);N_m}$ defined in the interval $[q,n+1]$, we have that
\begin{dmath*}
\sum_{q \leq N_1 < \cdots <N_m \leq n+1}{a_{(m);N_m} \cdots a_{(1);N_1}} 
=\sum_{k=p+1}^{m}{\left(\prod_{j=0}^{m-k-1}{a_{(m-j);n+1-j}}\right)\left(\sum_{q \leq N_1 < \cdots <N_{k} \leq n-m+k}{a_{(k);N_{k}} \cdots a_{(1);N_1}} \right)}
+\left(\prod_{j=0}^{m-p-1}{a_{(m-j);n+1-j}}\right)\left(\sum_{q \leq N_1 < \cdots <N_p \leq n-m+p+1}{a_{(p);N_p} \cdots a_{(1);N_1}} \right)
.\end{dmath*}
Using the notation from Eq.~\eqref{eq1}, this theorem can be written as 
$$
P_{m,q,n+1}
=\sum_{k=p+1}^{m}{\left(\prod_{j=0}^{m-k-1}{a_{(m-j);n+1-j}}\right)P_{k,q,n-m+k}}
+\left(\prod_{j=0}^{m-p-1}{a_{(m-j);n+1-j}}\right)P_{p,q,n-m+p+1}
.$$
\end{theorem}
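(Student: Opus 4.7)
The natural plan is to prove Theorem \ref{Theorem 2.3} by induction on $p$, with $m$, $q$, $n$, and the sequences held fixed, treating Theorem \ref{Theorem 2.1} as the base case ($p=0$) and using Lemma \ref{Lemma 2.1} to advance $p \mapsto p+1$.

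\textbf{Base case ($p=0$).} Here the sum $\sum_{k=p+1}^{m}$ becomes $\sum_{k=1}^{m}$, and the extra term is $\left(\prod_{j=0}^{m-1}a_{(m-j);n+1-j}\right) P_{0,q,n-m+1}$. Since $P_{0,q,n-m+1}=1$, this extra term is precisely the $k=0$ summand appearing in Theorem \ref{Theorem 2.1}, so the two formulae agree. Thus the $p=0$ case follows immediately from Theorem \ref{Theorem 2.1}.

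\textbf{Inductive step.} Assume the claim for $p<m$; I will show the claim for $p+1$. Compare the two formulae: passing from $p$ to $p+1$ removes from the sum the index $k=p+1$ (whose contribution is $\left(\prod_{j=0}^{m-p-2}a_{(m-j);n+1-j}\right)P_{p+1,q,n-m+p+1}$) and also deletes the old extra term $\left(\prod_{j=0}^{m-p-1}a_{(m-j);n+1-j}\right)P_{p,q,n-m+p+1}$, replacing both by the new extra term $\left(\prod_{j=0}^{m-p-2}a_{(m-j);n+1-j}\right)P_{p+1,q,n-m+p+2}$. So it suffices to verify the single algebraic identity
$$
P_{p+1,q,n-m+p+1} + a_{(p+1);n-m+p+2}\, P_{p,q,n-m+p+1}
= P_{p+1,q,n-m+p+2},
$$
where I have divided through by the common factor $\prod_{j=0}^{m-p-2}a_{(m-j);n+1-j}$ and noted that the additional factor picked up when $j$ ranges up to $m-p-1$ is $a_{(m-(m-p-1));\,n+1-(m-p-1)} = a_{(p+1);\,n-m+p+2}$. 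But this displayed identity is exactly Lemma \ref{Lemma 2.1} applied with the substitutions $m \mapsto p+1$ and $n \mapsto n-m+p+1$, so the inductive step is complete.

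\textbf{Main obstacle.} The mathematical content is essentially a single application of Lemma \ref{Lemma 2.1}; the real work is the index bookkeeping. One has to check carefully that the product $\prod_{j=0}^{m-k-1}a_{(m-j);n+1-j}$ truly gains exactly the factor $a_{(p+1);n-m+p+2}$ when the upper limit on $j$ advances by one, and that the sum truncation aligns with the removed $k=p+1$ term. Once those index checks are made explicit, the induction closes cleanly, and there is no further subtlety.
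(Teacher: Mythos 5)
Your proof is correct, but it is organized differently from the paper's. The paper argues directly: it applies Theorem \ref{Theorem 2.1} to expand $P_{m,q,n+1}$ as $\sum_{k=0}^{m}$, splits that sum at $k=p$, factors the common product $\prod_{j=0}^{m-p-1}a_{(m-j);n+1-j}$ out of the block $k=0,\ldots,p$, and then recognizes the remaining inner sum $\sum_{k=0}^{p}\bigl(\prod_{j=m-p}^{m-k-1}a_{(m-j);n+1-j}\bigr)P_{k,q,n-m+k}$ as a second instance of Theorem \ref{Theorem 2.1} with $(m,n)$ replaced by $(p,\,n-m+p)$, i.e.\ as $P_{p,q,n-m+p+1}$ --- so the whole lower block collapses in one step. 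You instead induct on $p$, absorbing one summand at a time into the extra term via Lemma \ref{Lemma 2.1}; your index bookkeeping checks out (the new factor is indeed $a_{(p+1);n-m+p+2}$, and Lemma \ref{Lemma 2.1} with $m\mapsto p+1$, $n\mapsto n-m+p+1$ gives exactly the required identity), and your base case correctly uses $P_{0,q,n-m+1}=1=P_{0,q,n-m}$ to match the $k=0$ summand. The paper's route is shorter and avoids a fresh induction, at the cost of the slightly delicate reindexing of the product from $\prod_{j=0}^{p-k-1}a_{(p-j);n-m+p+1-j}$ to $\prod_{j=m-p}^{m-k-1}a_{(m-j);n+1-j}$; your route needs only the elementary one-step recursion and makes the telescoping mechanism explicit, which is arguably easier to verify. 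Both are complete proofs.
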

\begin{proof}
By applying Theorem \ref{Theorem 2.1}, 
\begin{dmath*}
P_{m,q,n+1}
=\sum_{k=0}^{m}{\left(\prod_{j=0}^{m-k-1}{a_{(m-j);n+1-j}}\right) P_{k,q,n-m+k}}
=\sum_{k=p+1}^{m}{\left(\prod_{j=0}^{m-k-1}{a_{(m-j);n+1-j}}\right) P_{k,q,n-m+k}}
+\sum_{k=0}^{p}{\left(\prod_{j=0}^{m-k-1}{a_{(m-j);n+1-j}}\right) P_{k,q,n-m+k}}
=\sum_{k=p+1}^{m}{\left(\prod_{j=0}^{m-k-1}{a_{(m-j);n+1-j}}\right) P_{k,q,n-m+k}}
+\left(\prod_{j=0}^{m-p-1}{a_{(m-j);n+1-j}}\right)\sum_{k=0}^{p}{\left(\prod_{j=m-p}^{m-k-1}{a_{(m-j);n+1-j}}\right) P_{k,q,n-m+k}}.
\end{dmath*}
From Theorem \ref{Theorem 2.1}, with $m$ substituted by $p$ and $n$ substituted by $n-m+p$, we have  
$$
P_{p,q,n-m+p+1}
=\sum_{k=0}^{p}{\left(\prod_{j=0}^{p-k-1}{a_{(p-j);n-m+p+1-j}}\right) P_{k,q,n-m+k}}
=\sum_{k=0}^{p}{\left(\prod_{j=m-p}^{m-k-1}{a_{(m-j);n+1-j}}\right) P_{k,q,n-m+k}}
.$$
Hence, by substituting, we get 
$$
P_{m,q,n+1}
=\sum_{k=p+1}^{m}{\left(\prod_{j=0}^{m-k-1}{a_{(m-j);n+1-j}}\right)P_{k,q,n-m+k}}
+\left(\prod_{j=0}^{m-p-1}{a_{(m-j);n+1-j}}\right)P_{p,q,n-m+p+1}
.$$
\end{proof}
\begin{corollary} \label{Corollary 2.3}
If all sequences are the same, Theorem \ref{Theorem 2.3} simplifies to the following,  
\begin{dmath*}
\sum_{q \leq N_1 < \cdots <N_m \leq n+1}{a_{N_m} \cdots a_{N_1}} 
=\sum_{k=p+1}^{m}{\left(\prod_{j=0}^{m-k-1}{a_{n+1-j}}\right)\left(\sum_{q \leq N_1 < \cdots <N_{k} \leq n-m+k}{a_{N_{k}} \cdots a_{N_1}} \right)}
+\left(\prod_{j=0}^{m-p-1}{a_{n+1-j}}\right)\left(\sum_{q \leq N_1 < \cdots <N_p \leq n-m+p+1}{a_{N_p} \cdots a_{N_1}} \right). 
\end{dmath*}
Using the notation from Eq.~\eqref{eq2}, this theorem can be written as 
$$
\hat{P}_{m,q,n+1}
=\sum_{k=p+1}^{m}{\left(\prod_{j=0}^{m-k-1}{a_{n+1-j}}\right)\hat{P}_{k,q,n-m+k}}
+\left(\prod_{j=0}^{m-p-1}{a_{n+1-j}}\right)\hat{P}_{p,q,n-m+p+1}
.$$
\end{corollary}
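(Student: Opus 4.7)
The plan is to deduce Corollary \ref{Corollary 2.3} as a direct specialization of Theorem \ref{Theorem 2.3}. Under the assumption that all sequences coincide, i.e.\ $a_{(i);N}=a_N$ for every $i \in \{1,\ldots,m\}$ and every admissible $N$, each sequence symbol appearing in Theorem \ref{Theorem 2.3} simply drops its parenthesized label.

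First, I would observe that the product $\prod_{j=0}^{m-k-1} a_{(m-j);n+1-j}$ of Theorem \ref{Theorem 2.3} becomes $\prod_{j=0}^{m-k-1} a_{n+1-j}$, and likewise $\prod_{j=0}^{m-p-1} a_{(m-j);n+1-j}$ becomes $\prod_{j=0}^{m-p-1} a_{n+1-j}$, since the subscript $(m-j)$ is immaterial once all sequences are identified. Next, each inner multiple sum $\sum_{q \leq N_1 < \cdots < N_k \leq n-m+k} a_{(k);N_k}\cdots a_{(1);N_1}$ collapses to $\sum_{q \leq N_1 < \cdots < N_k \leq n-m+k} a_{N_k}\cdots a_{N_1}$, which is exactly $\hat{P}_{k,q,n-m+k}$; the same simplification applies to the $P_{p,\cdot}$ term, yielding $\hat{P}_{p,q,n-m+p+1}$. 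Assembling these specializations reproduces the stated identity.

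There is no genuine obstacle here: the entire content of Corollary \ref{Corollary 2.3} is a formal substitution into an already-proven theorem. If one preferred a self-contained derivation, one could alternatively mimic the proof of Theorem \ref{Theorem 2.3} while starting from Corollary \ref{Corollary 2.1} instead of Theorem \ref{Theorem 2.1}. Namely, split the range of $k$ in Corollary \ref{Corollary 2.1} into $k \in \{p+1,\ldots,m\}$ and $k \in \{0,\ldots,p\}$, factor the common product $\prod_{j=0}^{m-p-1} a_{n+1-j}$ out of the second piece, and recognize the remaining factor as $\hat{P}_{p,q,n-m+p+1}$ via a second application of Corollary \ref{Corollary 2.1} (with $m \mapsto p$ and $n \mapsto n-m+p$). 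Either route delivers the result with essentially no extra effort, so I would just present the direct specialization.
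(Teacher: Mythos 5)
Your proposal is correct and matches the paper's treatment: the paper states Corollary \ref{Corollary 2.3} with no separate proof precisely because it is the immediate specialization of Theorem \ref{Theorem 2.3} obtained by setting $a_{(i);N}=a_N$ for all $i$, which is exactly your main argument. The alternative self-contained route you sketch is also valid but unnecessary.
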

\begin{example}
{\em For $p=2$ and if the sequences are the same: }
\begin{dmath*}
\sum_{q \leq N_1< \cdots <N_m \leq n+1}{a_{N_m}\cdots a_{N_1}}
=\sum_{k=3}^{m}{\left(\prod_{j=0}^{m-k-1}{a_{n+1-j}}\right)\left(\sum_{q \leq N_1 < \cdots <N_{k} \leq n-m+k}{a_{N_{k}} \cdots a_{N_1}} \right)}
+\left(\prod_{j=0}^{m-3}{a_{n+1-j}}\right)\left(\sum_{q \leq N_1 <N_2 \leq n-m+3}{a_{N_2}a_{N_1}} \right)
.\end{dmath*}
\end{example}
\begin{example}
{\em For $p=m-2$ and if the sequences are the same: }
\begin{dmath*}
\sum_{q \leq N_1< \cdots <N_m \leq n+1}{a_{N_m}\cdots a_{N_1}}
-\sum_{q \leq N_1< \cdots <N_m \leq n}{a_{N_m}\cdots a_{N_1}}
=\left(a_{n+1}\right)\left(\sum_{q \leq N_1< \cdots <N_{m-1} \leq n-1}{a_{N_{m-1}}\cdots a_{N_1}}\right)
+\left(a_{n+1}a_{n}\right)\left(\sum_{q \leq N_1< \cdots <N_{m-2} \leq n-1}{a_{N_{m-2}}\cdots a_{N_1}}\right)
.\end{dmath*}
\end{example}
\subsection{General recurrent expression}
The general expression of the variation formula (illustrated by Theorem \ref{Theorem 2.3}) can be expressed in a recursive way as illustrated by the following theorem.  
\begin{theorem} \label{Theorem 2.4}
For any $m,q,n \in \mathbb{N}$ where $n \geq q+m-1$, for any $p \in [0,m]$, and for any set of sequences $a_{(1);N_1},\ldots,a_{(m);N_m}$ defined in the interval $[q,n+1]$, we have that 
\begin{dmath*}
P_{m,q,n+1}=a_{(m);n+1}\left\{ a_{(m-1);n}\left[ \cdots a_{(p+2);n-m+p+3} \left( a_{(p+1);n-m+p+2} \left( P_{p,q,n-m+p+1} \right) + P_{p+1,q,n-m+p+1} \right) + P_{p+2,q,n-m+p+2} \right] + P_{m-1,q,n-1} \right\} + P_{m,q,n}
.\end{dmath*}
\end{theorem}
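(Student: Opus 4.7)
The plan is to prove Theorem~\ref{Theorem 2.4} by induction on $m$, closely following the template used in the proof of Theorem~\ref{Theorem 2.2}. Theorem~\ref{Theorem 2.4} interpolates between Lemma~\ref{Lemma 2.1} (the case $p = m - 1$, where the nested bracket collapses to a single level) and Theorem~\ref{Theorem 2.2} (the case $p = 0$, where the recursion is unfolded all the way down to $P_0$). Since every layer of nesting corresponds to exactly one application of Lemma~\ref{Lemma 2.1}, the induction should proceed cleanly, with the bulk of the work lying in the bookkeeping of indices rather than in any substantive new manipulation.

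Concretely, I would take the base case $m = p + 1$, in which the entire nested bracket reduces to a single term and the statement becomes a restatement of Lemma~\ref{Lemma 2.1}. For the inductive step, fix $p$ and assume the formula for $m$. Starting from $P_{m+1, q, n+1}$, apply Lemma~\ref{Lemma 2.1} to obtain $P_{m+1, q, n+1} = P_{m+1, q, n} + a_{(m+1); n+1}\, P_{m, q, n}$, and then invoke the induction hypothesis on $P_{m, q, n}$ (i.e., the same formula with $n+1$ replaced by $n$) to unfold it into its nested form with outer factor $a_{(m); n}$. Substituting this expansion back and distributing $a_{(m+1); n+1}$ over the resulting bracket produces exactly the target expression for $(m+1)$, whose outermost factor $a_{(m+1); n+1}$ now multiplies an inner bracket headed by $a_{(m); n}$, followed by the rest of the prescribed nesting.

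The main obstacle will be verifying that the index shift $n \to n - 1$ inside the induction hypothesis transports each internal factor $a_{(k); j}$ and each residual $P_{k, q, \cdot}$ to precisely the position prescribed by the $(m+1)$-version of the formula. An alternative, non-inductive route would be to invoke the already-proved Theorem~\ref{Theorem 2.3} directly: factor $a_{(m); n+1}$ out of every summand with $k < m$ in the non-recurrent formula, recognize the remaining sum as the Theorem~\ref{Theorem 2.3} expansion of $P_{m-1, q, n}$ at the same $p$, and then iterate this factoring $m - p - 1$ further times to generate the nested bracket. Either route reduces the entire proof to a family of routine index identities that must be tracked with care but present no substantial difficulty.
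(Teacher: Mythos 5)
Your proposal is correct, but it takes a different route from the paper. The paper does not re-run an induction for Theorem~\ref{Theorem 2.4}: it starts from the already-proved Theorem~\ref{Theorem 2.2} (the fully unfolded nesting down to $P_{0,q,n-m+1}$), applies Theorem~\ref{Theorem 2.2} a second time with $m$ replaced by $p$ and $n$ replaced by $n-m+p$ to recognize that the innermost block of that expansion --- everything from the factor $a_{(p);n-m+p+1}$ inward together with the additive term $P_{p,q,n-m+p}$ --- is exactly the Theorem~\ref{Theorem 2.2} expansion of $P_{p,q,n-m+p+1}$, and then collapses that block to a single symbol, truncating the nesting at level $p$. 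Your primary route instead rebuilds the truncated nesting from scratch by induction on $m$ with base case $m=p+1$ (which is Lemma~\ref{Lemma 2.1}) and inductive step via Lemma~\ref{Lemma 2.1} plus the hypothesis at $n-1$; the index check you flag does go through, since in both the $(m+1)$-statement and the shifted hypothesis the level-$k$ factor is $a_{(k);n-m+k}$ and the level-$k$ residual is $P_{k,q,n-m+k-1}$. The paper's argument is shorter because it reuses Theorem~\ref{Theorem 2.2} wholesale and needs no new induction; yours is self-contained and makes the interpolation between Lemma~\ref{Lemma 2.1} ($p=m-1$) and Theorem~\ref{Theorem 2.2} ($p=0$) explicit, at the cost of repeating the bookkeeping already done for Theorem~\ref{Theorem 2.2}. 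Your alternative route through Theorem~\ref{Theorem 2.3} by iterated factoring also works but is the least economical of the three.
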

\begin{proof}
From Theorem \ref{Theorem 2.2}, with $m$ substituted by $p$ and $n$ substituted by $n-m+p$, we have 
\begin{dmath*}
P_{p,q,n-m+p+1}
=a_{(p);n-m+p+1}\left \{ a_{(p-1);n-m+p}\left[ \cdots a_{(2);n-m+3} \left( a_{(1);n-m+2} \left( P_{0,q,n-m+1} \right) + P_{1,q,n-m+1} \right) + P_{2,q,n-m+2} \right] + P_{p-1,q,n-m+p-1} \right \} + P_{p,q,n-m+p}
\end{dmath*}
where $P_{0,q,n-m+1}=1$. \\
Substituting into the expression of Theorem \ref{Theorem 2.2}, the inner part becomes $P_{p,q,n-m+p+1}$ and we get the desired formula.  
\end{proof}
\begin{corollary} \label{Corollary 2.4}
If all sequences are the same, Theorem \ref{Theorem 2.4} simplifies to the following form, \begin{dmath*}
\hat{P}_{m,q,n+1}=a_{n+1}\left\{ a_{n}\left[ \cdots a_{n-m+p+3} \left( a_{n-m+p+2} \left( \hat{P}_{p,q,n-m+p+1} \right) + \hat{P}_{p+1,q,n-m+p+1} \right) + \hat{P}_{p+2,q,n-m+p+2} \right] + \hat{P}_{m-1,q,n-1} \right\} + \hat{P}_{m,q,n}.
\end{dmath*}
\end{corollary}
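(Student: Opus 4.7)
The plan is to obtain this corollary as an immediate specialization of Theorem \ref{Theorem 2.4}. Recall that the hatted multiple sum $\hat{P}_{m,q,n}$ defined in Eq.~\eqref{eq2} is exactly the restriction of $P_{m,q,n}$ to the case where all $m$ sequences coincide, that is, $a_{(1);N}=a_{(2);N}=\cdots=a_{(m);N}=a_N$ for every index $N\in[q,n+1]$. Under this assumption, the labels $(k)$ attached to each sequence play no role and can be dropped from every occurrence in the identity.

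First I would set $a_{(j);N}=a_N$ for all $j\in\{1,\ldots,m\}$ and all admissible $N$, and then rewrite both sides of the conclusion of Theorem \ref{Theorem 2.4} under this substitution. On the left-hand side, the definition of $\hat{P}$ immediately gives $P_{m,q,n+1}=\hat{P}_{m,q,n+1}$. On the right-hand side, each factor $a_{(k);\ell}$ collapses to $a_\ell$ and each lower-order multiple sum $P_{k,q,\ell}$ collapses to $\hat{P}_{k,q,\ell}$. Performing these substitutions term by term inside the nested bracket expression, while leaving the index structure (and in particular the arguments $n-m+p+1,\,n-m+p+2,\,\ldots,\,n$) untouched, produces exactly the formula claimed in Corollary \ref{Corollary 2.4}.

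Since Theorem \ref{Theorem 2.4} is valid for any choice of sequences, including the degenerate choice in which they are all equal, there is no genuinely difficult step in this argument. The only thing to verify is that the notational replacement is executed consistently at every depth of the nested bracket structure, from the innermost pair $(a_{(p+1);n-m+p+2}(\hat{P}_{p,q,n-m+p+1})+\hat{P}_{p+1,q,n-m+p+1})$ outward to the outermost factor $a_{n+1}$ and trailing term $\hat{P}_{m,q,n}$. No induction, auxiliary lemma, or separate base case is required; the corollary is a pure specialization.
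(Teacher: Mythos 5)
Your proof is correct and matches the paper's (implicit) treatment: Corollary \ref{Corollary 2.4} is obtained simply by setting $a_{(j);N}=a_N$ for all $j$ in Theorem \ref{Theorem 2.4}, whereupon every $P$ becomes $\hat{P}$ and every subscripted sequence factor collapses to $a_\ell$. The paper offers no separate proof precisely because the statement is a pure specialization, exactly as you argue.
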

\begin{example}
{\em For $p=m-2$ and if the sequences are the same: }
\begin{dmath*}
\sum_{q \leq N_1< \cdots <N_m \leq n+1}{a_{N_m}\cdots a_{N_1}}
-\sum_{q \leq N_1< \cdots <N_m \leq n}{a_{N_m}\cdots a_{N_1}}
=a_{n+1}\left\{\sum_{q \leq N_1< \cdots <N_{m-1} \leq n-1}{a_{N_{m-1}}\cdots a_{N_1}}+a_{n} \left[ \sum_{q \leq N_1< \cdots <N_{m-2} \leq n-1}{a_{N_{m-2}}\cdots a_{N_1}}\right] \right\}
.\end{dmath*}
\end{example}
\section{Reduction Formulas} \label{Reduction Formulas}
The objective of this section is to introduce formulas which can be used to reduce multiple sums from their original form containing multiple summations $\left(\sum_{q \leq N_1 < \cdots < N_m \leq n}{a_{N_m} \cdots a_{N_1}}\right)$ to a form containing only simple sums $\left(\left(\sum_{N=q}^{n}{(a_N)^i}\right)^y\right)$. This will involve the use of partitions of an integer. 
\subsection{A brief introduction to partitions}
In this paper, partitions are involved in the reduction formula for a multiple sum. For this reason, a brief introduction to partitions will be given in this section. 
\begin{definition}
A partition of a non-negative integer m is a set of positive integers whose sum equals $m$.
We can represent a partition of $m$ as a vector $(y_{k,1},\ldots,y_{k,m})$ that verifies
\begin{align} \label{partitionDef}
\begin{pmatrix}
y_{k,1} \\
\vdots \\
y_{k,m} \\
\end{pmatrix}
\cdot
\begin{pmatrix}
1 \\
\vdots \\
m \\
\end{pmatrix}
=y_{k,1}+2y_{k,2}+ \cdots + my_{k,m}=m
.\end{align}
\end{definition}
The set $\{(y_{1,1},\ldots,y_{1,m}),(y_{2,1},\ldots,y_{2,m}), \cdots \}$ containing all of the vectors $(y_{k,1},\ldots,y_{k,m})$ that verify Eq.~\eqref{partitionDef} represents the set of all partitions of $m$. The size of this set represents the number of partitions of $m$ and is given by the partition function $p(m)$. The value of $p(m)$ can be calculated from the generating function developed, in the eighteen century, by Euler \cite{PartitionEuler}, 
\begin{equation}
\sum_{m=0}^{\infty}{p(m)x^m}=\prod_{j=1}^{\infty}{\frac{1}{1-x^j}}.
\end{equation}
Other methods for calculating $p(m)$ include the recurrent definition provided by Euler, the asymptotic expression introduced by Hardy and Ramanujan in 1918 \cite{Hardy}, and Rademacher's formula for $p(m)$ \cite{Rademacher} \cite{rademacher1943expansion}.
However, the most celebrated way of obtaining $p(m)$ is by applying the formula developed by Ono and Bruinier which expresses $p(m)$ as a finite sum \cite{Ono}.
 \\
Additionnaly, a partition can be represented using a variety of ways although the most common representations are Ferrers diagrams and Young diagrams. Similarly, there exists some variants of Ferrers diagrams that are used (see \cite{propp1989some}). 
\begin{remark}
{\em A more in-depth explanation of partitions can be found in \cite{andrews1998theory}. }
\end{remark}
\subsection{Reduction Theorem and Partition Identities}
In order to prove the main theorem of this section (Theorem \ref{Theorem 3.1}, which we will call the reduction theorem), we have to prove a set of lemmas. There are 2 sets of lemmas needed: The first set of lemmas is also needed to prove the reduction theorem for Recurrent sums and, hence, was already proven in \cite{RecurrentSums}. The second set of lemmas is specific to the type of sums studied in this paper and will be proven in this section. Additionally, note that even though the major reason for proving these lemmas is to prove the reduction theorem, however, these lemmas are important on their own as they provide relations governing partitions. \\

Before we can proceed to prove the needed lemmas, we need to define the following notation: Let $[x^r]\left(P(x)\right)$ represent the coefficient of $x^r$ in $P(x)$. Let $x^{\overline{m}}=x(x+1)\cdots (x+m-1)$ represent the rising factorial. Let $(x)_m=x(x-1)\cdots (x-m+1)$ represent the falling factorial. \\
The original definition of Stirling numbers of the first kind $S(m,r)$ was as the coefficients in the expansion of $(x)_m$:
\begin{equation}
(x)_m
=\sum_{k=0}^{m}{S(m,k)x^k} \,\,\,\,\,\,\,\, or \,\,\,\,\,\,\,\,   S(m,r)=[x^r](x)_m
.\end{equation}
In a similar way, the unsigned Stirling numbers of the first kind, denoted $|S(m,r)|$ or ${m \brack r}$, can be expressed in terms of the rising factorial $x^{\overline{m}}$:
\begin{equation}
x^{\overline{m}}
=\sum_{k=0}^{m}{{m \brack k}x^k} \,\,\,\,\,\,\,\, or \,\,\,\,\,\,\,\,   {m \brack r}=[x^r]\left(x^{\overline{m}}\right)
.\end{equation}
\begin{remark}
{\em For $m \geq 1$, ${m \brack 0}=0.$ }
\end{remark}
\begin{remark}
{\em For $m \geq 0$, ${m \brack m}=1.$ }
\end{remark}
From this definition, the famous finite alternating sum of the unsigned Stirling numbers of the first kind can be directly deduced by substituting $x$ by $(-1)$ to get 
\begin{itemize}
\item If $m=0$ or $m=1$ 
\begin{equation}
\sum_{k=0}^{0}{(-1)^k {0 \brack k}}
={0 \brack 0}=1=0!(-1)^0.
\end{equation}
\begin{equation}
\sum_{k=0}^{1}{(-1)^k {1 \brack k}}
={1 \brack 0}-{1 \brack 1}=0-1=1!(-1)^1. 
\end{equation}
\item If $m \geq 2$ 
\begin{equation}
\sum_{k=0}^{m}{(-1)^k {m \brack k}}
=(-1)(-1+1)\cdots (-1+m-1)=0. 
\end{equation}
\end{itemize}
Hence, 
\begin{equation} \label{StirlingSum}
\sum_{k=0}^{m}{(-1)^k {m \brack k}}=
\begin{cases}
(-1)^m m! & $for $ 0 \leq m \leq 1, \\
0 & $for $ m \geq 2. \\
\end{cases}
\end{equation}
\begin{remark}
{\em More details on Stirling numbers of the first kind can be found in  \cite{loeb1992generalization}. }
\end{remark}
For simplicity, we define $\sum{f(i)}$ to represent $\sum_{i=1}^{m}{f(i)}$. In particular, $\sum{i.y_{k,i}}=\sum_{i=1}^{m}{i.y_{k,i}}$ and $\sum{y_{k,i}}=\sum_{i=1}^{m}{y_{k,i}}$. Additionally, a partition of $m$ of length $r$ is a partition of $m$ where $\sum{y_{k,i}}=r$. \\

Now that we have defined the needed notation, we can start proving the required lemmas. We begin by proving the following identity involving an alternating sum over partitions. 
\begin{lemma} \label{Lemma 3.1}
Let $m$ be a non-negative integer, 
\begin{equation*}
\sum_{\substack{k \\ \sum{i.y_{k,i}}=m}}{\prod_{i=1}^{m}{\frac{(-1)^{y_{k,i}}}{i^{y_{k,i}}(y_{k,i})!}}}
=\sum_{\substack{k \\ \sum{i.y_{k,i}}=m}}{(-1)^{\sum{{y_{k,i}}}}\prod_{i=1}^{m}{\frac{1}{i^{y_{k,i}}(y_{k,i})!}}}
=
\begin{cases}
(-1)^m & $ {\em for} $0 \leq m \leq 1, \\
0 & $ {\em for} $ m \geq 2. \\
\end{cases}
\end{equation*}
\end{lemma}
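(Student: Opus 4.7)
The natural approach is via generating functions: I will recognize the left-hand side as the coefficient of $x^m$ in an explicit power series that collapses to $1-x$. For each integer $i \geq 1$, expand
\begin{equation*}
\exp\!\left(-\frac{x^i}{i}\right) = \sum_{y \geq 0} \frac{(-1)^y}{i^y\, y!}\, x^{iy},
\end{equation*}
and form the product over $i \geq 1$. Multiplying out, the coefficient of $x^m$ in $\prod_{i \geq 1} \exp(-x^i/i)$ is
\begin{equation*}
\sum_{\substack{(y_1,y_2,\ldots) \\ \sum i\, y_i = m}} \prod_{i \geq 1} \frac{(-1)^{y_i}}{i^{y_i}\, y_i!},
\end{equation*}
which, since the constraint $\sum i\, y_i = m$ forces $y_i = 0$ for every $i > m$, is precisely the left-hand side of the lemma after truncating the products at $i = m$.

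On the other hand, summing the exponents formally yields
\begin{equation*}
\prod_{i \geq 1} \exp\!\left(-\frac{x^i}{i}\right) = \exp\!\left(-\sum_{i \geq 1} \frac{x^i}{i}\right) = \exp\bigl(\ln(1-x)\bigr) = 1-x,
\end{equation*}
whose $x^m$-coefficients are $1,\,-1,\,0,\,0,\ldots$ for $m=0,1,2,\ldots$; this matches the stated right-hand side, namely $(-1)^m$ for $m \in \{0,1\}$ and $0$ for $m \geq 2$. The only point needing care is that all manipulations are purely coefficient-wise, which is legitimate because for each fixed $m$ only finitely many tuples $(y_1,\ldots,y_m)$ with $\sum i\, y_i = m$ contribute, so every quantity in sight is a finite sum.

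A more in-keeping alternative, given the Stirling apparatus just introduced, is to group the partitions of $m$ by length $k=\sum_i y_{k,i}$ and invoke the classical partition expansion ${m \brack k} = m!\sum \prod_i \frac{1}{i^{y_i}\, y_i!}$, with the inner sum ranging over partitions of $m$ of length $k$ (provable in one line from the EGF identity $(1-z)^{-x}=\exp(x\sum_i z^i/i)$). Under this grouping the left-hand side becomes $\frac{1}{m!}\sum_{k=0}^m (-1)^k {m \brack k}$, and the conclusion follows immediately from Eq.~\eqref{StirlingSum}. The only genuine obstacle in either route is justifying the link between partitions and the ambient generating function, but this amounts to expanding a single exponential, not to combinatorial cleverness.
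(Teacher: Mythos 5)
Your proposal is correct, and your primary route is genuinely different from the paper's. The paper proceeds exactly as in your ``more in-keeping alternative'': it splits the sum over partitions of $m$ according to the length $r=\sum y_{k,i}$, invokes the identity $\sum_{\sum i\,y_{k,i}=m,\ \sum y_{k,i}=r}\prod_i \frac{1}{i^{y_{k,i}}(y_{k,i})!}=\frac{1}{m!}{m \brack r}$ (imported from the companion paper on recurrent sums), and finishes with the alternating Stirling sum \eqref{StirlingSum}. Your main argument instead reads the left-hand side as $[x^m]\prod_{i\geq 1}\exp(-x^i/i)=[x^m](1-x)$, which is self-contained, shorter, and makes the answer $1,-1,0,0,\ldots$ transparent rather than an artifact of a Stirling-number cancellation; the coefficient-wise justification you give (only finitely many tuples contribute for each fixed $m$) is exactly what is needed for the formal-power-series manipulation. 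What the paper's route buys in exchange is that the intermediate identity $\frac{1}{m!}{m \brack r}$ is reused elsewhere (e.g.\ in Theorem \ref{Theorem 6.2} and the recurrent-sum comparison), so the authors get the refined statement ``per length $r$'' for free, whereas your generating-function proof only delivers the aggregate alternating sum; conversely, your method generalizes immediately to weighted versions by replacing $-x^i/i$ with other exponents. Either argument is complete as written.
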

\begin{proof} 
\begin{equation*}
\begin{split}
\sum_{\substack{k \\ \sum{i.y_{k,i}}=m}}{\prod_{i=1}^{m}{\frac{(-1)^{y_{k,i}}}{i^{y_{k,i}}(y_{k,i})!}}}
&=\sum_{r=0}^{m}{\sum_{\substack{k \\ \sum{i.y_{k,i}}=m \\ \sum{y_{k,i}}=r}}{(-1)^{\sum{y_{k,i}}}\prod_{i=1}^{m}{\frac{1}{i^{y_{k,i}}(y_{k,i})!}}}} \\
&=\sum_{r=0}^{m}{(-1)^r \sum_{\substack{k \\ \sum{i.y_{k,i}}=m \\ \sum{y_{k,i}}=r}}{\prod_{i=1}^{m}{\frac{1}{i^{y_{k,i}}(y_{k,i})!}}}}
.\end{split}
\end{equation*}
From \cite{RecurrentSums}, we have 
$$
\sum_{\substack{k \\ \sum{i.y_{k,i}}=m \\ \sum{y_{k,i}}=r}}{\prod_{i=1}^{m}{\frac{1}{i^{y_{k,i}}(y_{k,i})!}}}
=\frac{1}{m!}{m \brack r}
.$$
Hence, 
$$
\sum_{\substack{k \\ \sum{i.y_{k,i}}=m}}{\prod_{i=1}^{m}{\frac{(-1)^{y_{k,i}}}{i^{y_{k,i}}(y_{k,i})!}}}
=\sum_{r=0}^{m}{(-1)^r \frac{1}{m!}{m \brack r}}
=\frac{1}{m!}\sum_{r=0}^{m}{(-1)^r {m \brack r}}
.$$
Using Eq.~\eqref{StirlingSum}, we obtain the desired theorem. 
\end{proof}
A more general form of Lemma \ref{Lemma 3.1} is illustrated in the following lemma.  
\begin{lemma} \label{Lemma 3.2}
Let $(y_{k,1}, \cdots , y_{k,m})=\{(y_{1,1}, \cdots , y_{1,m}),(y_{2,1}, \cdots , y_{2,m}), \cdots \}$ be the set of all partitions of $m$. Let $(\varphi_1, \cdots , \varphi_m)$ be a partition of $r \leq m$.
\begin{dmath*}
\sum_{\substack{k \\ \sum{i.y_{k,i}}=m}}{\prod_{i=1}^{m}{\frac{(-1)^{y_{k,i}}\binom{y_{k,i}}{\varphi_i}}{i^{y_{k,i}}(y_{k,i})!}}}
=\sum_{\substack{k \\ \sum{i.y_{k,i}}=m \\ y_{k,i} \geq \varphi_i}}{\prod_{i=1}^{m}{\frac{(-1)^{y_{k,i}}\binom{y_{k,i}}{\varphi_i}}{i^{y_{k,i}}(y_{k,i})!}}}
=
\begin{cases}
(-1)^{m-r} \prod_{i=1}^{m}{\frac{(-1)^{\varphi_i}}{i^{\varphi_{i}} (\varphi_{i})!}} & ${\em for} $ 0 \leq m-r \leq 1, \\
0 & ${\em for} $ m-r \geq 2. \\
\end{cases}
\end{dmath*}
\begin{remark}
{\em Knowing that the largest element of a partition of $r$ is $r$, we can rewrite it as follows }
\begin{dmath*}
\sum_{\substack{k \\ \sum{i.y_{k,i}}=m}}{\prod_{i=1}^{m}{\frac{(-1)^{y_{k,i}}\binom{y_{k,i}}{\varphi_i}}{i^{y_{k,i}}(y_{k,i})!}}}
=\sum_{\substack{k \\ \sum{i.y_{k,i}}=m \\ y_{k,i} \geq \varphi_i}}{\prod_{i=1}^{m}{\frac{(-1)^{y_{k,i}}\binom{y_{k,i}}{\varphi_i}}{i^{y_{k,i}}(y_{k,i})!}}}
=
\begin{cases}
(-1)^{m-r} \prod_{i=1}^{r}{\frac{(-1)^{\varphi_i}}{i^{\varphi_{i}} (\varphi_{i})!}} & ${\em for} $ 0 \leq m-r \leq 1, \\
0 & ${\em for} $ m-r \geq 2. \\
\end{cases}
\end{dmath*}
\end{remark}
\end{lemma}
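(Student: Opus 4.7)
The plan is to reduce Lemma \ref{Lemma 3.2} to Lemma \ref{Lemma 3.1} by a shift of summation index. First I would observe that whenever some $y_{k,i}<\varphi_i$, the binomial coefficient $\binom{y_{k,i}}{\varphi_i}$ vanishes and kills the corresponding term, which justifies the equality of the two sums on the left-hand side (unrestricted sum and the sum restricted by $y_{k,i}\geq\varphi_i$). From here on I can work with the restricted sum.

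Next I would introduce the substitution $z_{k,i}:=y_{k,i}-\varphi_i\geq 0$. Using the identity $\binom{y_{k,i}}{\varphi_i}/(y_{k,i})!=1/(\varphi_i!\,z_{k,i}!)$, each summand factors as
\begin{equation*}
\prod_{i=1}^{m}\frac{(-1)^{y_{k,i}}\binom{y_{k,i}}{\varphi_i}}{i^{y_{k,i}}(y_{k,i})!}
=\left(\prod_{i=1}^{m}\frac{(-1)^{\varphi_i}}{i^{\varphi_i}\varphi_i!}\right)\prod_{i=1}^{m}\frac{(-1)^{z_{k,i}}}{i^{z_{k,i}}(z_{k,i})!}.
\end{equation*}
The $\varphi$-factor is constant and can be pulled out of the sum. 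The defining constraint $\sum i\cdot y_{k,i}=m$ translates into $\sum i\cdot z_{k,i}=m-\sum i\cdot\varphi_i=m-r$, so the remaining sum is over all tuples $(z_{k,1},\ldots,z_{k,m})$ of non-negative integers satisfying $\sum i\cdot z_{k,i}=m-r$.

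The final step is to recognize this inner sum as exactly the left-hand side of Lemma \ref{Lemma 3.1} applied with $m$ replaced by $m-r$. The only subtle point, which I would address explicitly, is that the product here runs up to $i=m$ rather than $i=m-r$; but since $\sum i\cdot z_{k,i}=m-r$ forces $z_{k,i}=0$ for every $i>m-r$, and such factors contribute $1$ to the product, the two expressions coincide. Invoking Lemma \ref{Lemma 3.1} then yields $(-1)^{m-r}$ when $m-r\in\{0,1\}$ and $0$ when $m-r\geq 2$, which gives the stated formula. The remark at the end is immediate from the observation that $\varphi_i=0$ for $i>r$, so the product $\prod_{i=1}^{m}\frac{(-1)^{\varphi_i}}{i^{\varphi_i}\varphi_i!}$ truncates to $\prod_{i=1}^{r}$.

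I do not expect any serious obstacle: the main (minor) subtlety is the bookkeeping of index ranges in the reduction to Lemma \ref{Lemma 3.1}, together with being careful that the binomial-coefficient vanishing legitimately extends the restricted sum to the unrestricted one.
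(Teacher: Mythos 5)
Your proposal is correct and follows essentially the same route as the paper's own proof: drop the vanishing terms via the binomial coefficient, factor out the constant $\varphi$-product, shift to $z_{k,i}=y_{k,i}-\varphi_i$, and reduce to Lemma \ref{Lemma 3.1} with $m$ replaced by $m-r$. Your explicit remark that $z_{k,i}=0$ for $i>m-r$ (so the product truncates harmlessly) is exactly the index-range point the paper also invokes, so there is nothing to add.
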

\begin{proof}
Knowing that $\binom{n}{k}$ is zero if $n<k$, then $\binom{y_{k,i}}{\varphi_i}=0$ if $ \exists i \in \mathbb{N}, y_{k,i}<\varphi_i$. Hence, 
\begin{dmath*}
\sum_{\substack{k \\ \sum{i.y_{k,i}}=m}}{\prod_{i=1}^{m}{\frac{(-1)^{y_{k,i}}\binom{y_{k,i}}{\varphi_i}}{i^{y_{k,i}}(y_{k,i})!}}}
=\sum_{\substack{k \\ \sum{i.y_{k,i}}=m \\ \exists i, y_{k,i}<\varphi_i}}{\prod_{i=1}^{m}{\frac{(-1)^{y_{k,i}}\binom{y_{k,i}}{\varphi_i}}{i^{y_{k,i}}(y_{k,i})!}}}
+\sum_{\substack{k \\ \sum{i.y_{k,i}}=m \\ y_{k,i} \geq \varphi_i}}{\prod_{i=1}^{m}{\frac{(-1)^{y_{k,i}}\binom{y_{k,i}}{\varphi_i}}{i^{y_{k,i}}(y_{k,i})!}}}
=\sum_{\substack{k \\ \sum{i.y_{k,i}}=m \\ y_{k,i} \geq \varphi_i}}{\prod_{i=1}^{m}{\frac{(-1)^{y_{k,i}}\binom{y_{k,i}}{\varphi_i}}{i^{y_{k,i}}(y_{k,i})!}}}
.\end{dmath*}
The first part of the proof is complete. 
\begin{equation*}
\begin{split}
\sum_{\substack{k \\ \sum{i.y_{k,i}}=m}}{\prod_{i=1}^{m}{\frac{(-1)^{y_{k,i}}\binom{y_{k,i}}{\varphi_i}}{i^{y_{k,i}}(y_{k,i})!}}}
&=\sum_{\substack{k \\ \sum{i.y_{k,i}}=m}}{\prod_{i=1}^{m}{\frac{(-1)^{y_{k,i}}}{i^{y_{k,i}}(y_{k,i})!}.\frac{y_{k,i}!}{\varphi_i! (y_{k,i}-\varphi_i)!}}}\\
&=\sum_{\substack{k \\ \sum{i.y_{k,i}}=m}}{\prod_{i=1}^{m}{\frac{(-1)^{y_{k,i}}}{i^{y_{k,i}}}.\frac{1}{\varphi_i! (y_{k,i}-\varphi_i)!}}} \\
&=\sum_{\substack{k \\ \sum{i.y_{k,i}}=m}}{\prod_{i=1}^{m}{\frac{(-1)^{\varphi_{i}}}{i^{\varphi_i} \varphi_i!}.\frac{(-1)^{y_{k,i}-\varphi_{i}}}{i^{y_{k,i}-\varphi_i} (y_{k,i}-\varphi_i)!}}} \\
&=\sum_{\substack{k \\ \sum{i.y_{k,i}}=m}}{\prod_{i=1}^{m}{\frac{(-1)^{\varphi_{i}}}{i^{\varphi_i} \varphi_i!}}\prod_{i=1}^{m}{\frac{(-1)^{y_{k,i}-\varphi_{i}}}{i^{y_{k,i}-\varphi_i} (y_{k,i}-\varphi_i)!}}} 
.\end{split}
\end{equation*}
As $\varphi_1, \ldots , \varphi_m$ are all constants then $\prod_{i=1}^{m}{\frac{(-1)^{\varphi_{i}}}{i^{\varphi_i} \varphi_i!}}$ is constant. This factor is constant and is common to all terms of the sum, therefore, we can factor it and take it outside the sum. 
$$
\sum_{\substack{k \\ \sum{i.y_{k,i}}=m}}{\prod_{i=1}^{m}{\frac{(-1)^{y_{k,i}}\binom{y_{k,i}}{\varphi_i}}{i^{y_{k,i}}(y_{k,i})!}}}
=\left(\prod_{i=1}^{m}{\frac{(-1)^{\varphi_{i}}}{i^{\varphi_i} \varphi_i!}}\right) \sum_{\substack{k \\ \sum{i.y_{k,i}}=m}}{\prod_{i=1}^{m}{\frac{(-1)^{y_{k,i}-\varphi_{i}}}{i^{y_{k,i}-\varphi_i} (y_{k,i}-\varphi_i)!}}}
.$$ 
Having that $(\varphi_1, \cdots , \varphi_m)$ is a partition of $r \leq m$, hence, $\sum{i.\varphi_i}=r \leq m$. Thus, the condition $\sum{i.y_{k,i}}=m$ can be replaced by $\sum{i.(y_{k,i}-\varphi_i)}=\sum{i.y_{k,i}}-\sum{i.\varphi_i}=m-r(\geq 0)$. Hence, 
$$
\sum_{\substack{k \\ \sum{i.y_{k,i}}=m}}{\prod_{i=1}^{m}{\frac{(-1)^{y_{k,i}}\binom{y_{k,i}}{\varphi_i}}{i^{y_{k,i}}(y_{k,i})!}}}
=\left(\prod_{i=1}^{m}{\frac{(-1)^{\varphi_{i}}}{i^{\varphi_i} \varphi_i!}}\right) \sum_{\substack{k \\ \sum{i.(y_{k,i}-\varphi_i)}=m-r}}{\prod_{i=1}^{m}{\frac{(-1)^{y_{k,i}-\varphi_i}}{i^{y_{k,i}-\varphi_i} (y_{k,i}-\varphi_i)!}}}
.$$ 
Let $Y_{k,i}=y_{k,i}-\varphi_i$, 
$$
\sum_{\substack{k \\ \sum{i.y_{k,i}}=m}}{\prod_{i=1}^{m}{\frac{(-1)^{y_{k,i}}\binom{y_{k,i}}{\varphi_i}}{i^{y_{k,i}}(y_{k,i})!}}}
=\left(\prod_{i=1}^{m}{\frac{(-1)^{\varphi_i}}{i^{\varphi_i} \varphi_i!}}\right) \sum_{\substack{k \\ \sum{i.Y_{k,i}}=m-r}}{\prod_{i=1}^{m}{\frac{(-1)^{Y_{k,i}}}{i^{Y_{k,i}} Y_{k,i}!}}}
.$$ 
Knowing that the largest element of a partition of $(m-r)$ is $(m-r)$, hence,   
$$
\sum_{\substack{k \\ \sum{i.y_{k,i}}=m}}{\prod_{i=1}^{m}{\frac{(-1)^{y_{k,i}}\binom{y_{k,i}}{\varphi_i}}{i^{y_{k,i}}(y_{k,i})!}}}
=\left(\prod_{i=1}^{m}{\frac{(-1)^{\varphi_i}}{i^{\varphi_i} \varphi_i!}}\right) \sum_{\substack{k \\ \sum{i.Y_{k,i}}=m-r}}{\prod_{i=1}^{m-r}{\frac{(-1)^{Y_{k,i}}}{i^{Y_{k,i}} Y_{k,i}!}}}
.$$ 
Applying Lemma \ref{Lemma 3.1}, with $y_{k,i}$ substituted by $Y_{k,i}$ and $m$ substituted by $m-r$, we get 
\begin{dmath*}
\sum_{\substack{k \\ \sum{i.y_{k,i}}=m}}{\prod_{i=1}^{m}{\frac{(-1)^{y_{k,i}}\binom{y_{k,i}}{\varphi_i}}{i^{y_{k,i}}(y_{k,i})!}}}
=
\begin{cases}
(-1)^{m-r} \prod_{i=1}^{m}{\frac{(-1)^{\varphi_i}}{i^{\varphi_{i}} (\varphi_{i})!}} & $for $ 0 \leq m-r \leq 1, \\
0 & $for $ m-r \geq 2. \\
\end{cases}
\end{dmath*}
The proof is complete. 
\end{proof}
\begin{remark}
{\em If $r>m$, then $\sum{i.Y_{k,i}}=m-r<0$ which makes Lemma \ref{Lemma 3.1} invalid. Hence, this lemma is invalid for $r>m$. }
\end{remark}
Now that all the required lemmas have been proven, we show the following theorem which allows the representation of a multiple sum in terms of simple sums.  
\begin{theorem}[Reduction Theorem] \label{Theorem 3.1}
Let $m$ be a non-negative integer, $p(m)$ be the number of partition of $m$, $k$ be the index of the $k$-th partition of $m$ $(1 \leq k \leq p(m))$, $i$ be an integer between $1$ and $m$, and $y_{k,i}$ be the multiplicity of $i$ in the $k$-th partition of $m$. The reduction theorem for multiple sums is stated as follow:
$$\sum_{q \leq N_1 < \cdots <N_m \leq n}{a_{N_m}\cdots a_{N_1}}
=(-1)^m\sum_{\substack{k \\ \sum{i.y_{k,i}}=m}}{\prod_{i=1}^{m}{\frac{(-1)^{y_{k,i}}}{(y_{k,i})!} \left( \frac{1}{i}\sum_{N=q}^{n}{(a_N)^i }\right)^{y_{k,i}}}}.$$
\end{theorem}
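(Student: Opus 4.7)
The plan is to prove the Reduction Theorem by induction on the upper bound $n$, taking advantage of the variation formula already established in Lemma \ref{Lemma 2.1}. Denote the right-hand side of the theorem by $R_m(q,n)$, a function of the power sums $\sigma_i(q,n)=\sum_{N=q}^{n}(a_N)^i$. The goal is to show $R_m(q,n)=\hat{P}_{m,q,n}$ for all admissible $m$ and $n$, which by Lemma \ref{Lemma 2.1} reduces to proving that $R_m$ satisfies the same recurrence: $R_m(q,n+1)=R_m(q,n)+a_{n+1}R_{m-1}(q,n)$.

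For the base case I would take $n=q-1$ (an empty index range). Then $\sigma_i(q,q-1)=0$, so every partition summand of $R_m$ with $m\geq 1$ contains a zero factor and vanishes, while the $m=0$ case trivially gives $R_0=1$; this matches $\hat{P}_{m,q,q-1}$, which is $1$ for $m=0$ and $0$ otherwise.

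For the inductive step, I would use $\sigma_i(q,n+1)=\sigma_i(q,n)+(a_{n+1})^i$ and expand each factor $(\sigma_i(q,n+1)/i)^{y_i}$ in $R_m(q,n+1)$ by the binomial theorem, writing the splitting exponents as $\varphi_i$ (contribution of the new term) and $y_i-\varphi_i$ (contribution of the old terms). After swapping the order of summation and performing the substitution $Y_i=y_i-\varphi_i$, the condition $\sum i\,y_i=m$ becomes $\sum i\,Y_i=m-r$ where $r=\sum i\,\varphi_i$. The double sum then factorizes: the inner $(Y_i)$-sum over partitions of $m-r$ reassembles, up to an explicit sign $(-1)^{m-r}$, into $R_{m-r}(q,n)$, and the outer sum becomes $\sum_{r=0}^{m} a_{n+1}^{r}\!\sum_{\sum i\varphi_i=r}\prod_i \frac{(-1)^{\varphi_i}}{i^{\varphi_i}\varphi_i!}$, the factor $a_{n+1}^{r}$ arising from the identity $\prod_i a_{n+1}^{i\varphi_i}=a_{n+1}^{\sum i\varphi_i}$.

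The decisive step is the application of Lemma \ref{Lemma 3.1}: the inner alternating partition sum equals $(-1)^{r}$ when $r\in\{0,1\}$ and vanishes for $r\geq 2$. Only the $r=0$ and $r=1$ contributions survive; after the three sign factors $(-1)^{m}$, $(-1)^{m-r}$, and $(-1)^{r}$ collapse, the $r=0$ term yields $R_m(q,n)$ and the $r=1$ term yields exactly $a_{n+1}R_{m-1}(q,n)$, matching Lemma \ref{Lemma 2.1}. The main obstacle I expect is not conceptual but the bookkeeping of these three intertwined signs and the careful reindexing from $y_i$ to $Y_i$, ensuring the constraints transform correctly; the elegance of the proof lies precisely in the fact that Lemma \ref{Lemma 3.1} truncates an a priori complicated sum over all partitions down to just two surviving terms.
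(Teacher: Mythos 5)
Your proof is correct, and its skeleton is the same as the paper's: induction on the upper bound $n$, binomial expansion of each new power sum $\sigma_i(q,n+1)=\sigma_i(q,n)+(a_{n+1})^i$, and a partition identity that kills all but two terms so that the recurrence of Lemma \ref{Lemma 2.1} reappears. The one genuine difference is which factor carries the free exponent $\varphi_i$: the paper attaches $\varphi_i$ to the old power sums $\sigma_i(q,n)$, fixes a partition $\varphi$ of $j$, and then needs the generalized identity of Lemma \ref{Lemma 3.2} (the alternating partition sum weighted by $\binom{y_{k,i}}{\varphi_i}$) to collapse the $y$-sum, together with a somewhat delicate regrouping/interchange of the $k$-sum and the $j$-sum. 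You instead attach $\varphi_i$ to the new term $(a_{n+1})^i$, substitute $Y_i=y_i-\varphi_i$ directly in the double sum, and observe that it factorizes over $r=\sum i\varphi_i$ into a $\varphi$-sum carrying no $\sigma$'s times a $Y$-sum that is $(-1)^{m-r}R_{m-r}(q,n)$; the $\varphi$-sum is then exactly the unweighted identity of Lemma \ref{Lemma 3.1}, so you never need Lemma \ref{Lemma 3.2} at all. (This is no accident: the paper proves Lemma \ref{Lemma 3.2} from Lemma \ref{Lemma 3.1} by precisely the substitution you perform, so you have in effect inlined that reduction at the only place it is used.) Your sign bookkeeping checks out ($(-1)^m(-1)^{m-r}(-1)^r=1$ for $r\in\{0,1\}$), your base case at the empty range $n=q-1$ is even slightly cleaner than the paper's base case at $n=q$ (which itself requires Lemma \ref{Lemma 3.1}), and framing the conclusion as ``$R_m$ and $\hat{P}_{m,q,n}$ satisfy the same recurrence with the same initial data'' is a tidy way to close the induction over all $m$ simultaneously.
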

\begin{remark}
{\em The theorem can also be written as }
$$\sum_{q \leq N_1 < \cdots <N_m \leq n}{a_{N_m}\cdots a_{N_1}}
=\sum_{\substack{k \\ \sum{i.y_{k,i}}=m}}{(-1)^{m-\sum{y_{k,i}}}\prod_{i=1}^{m}{\frac{1}{(y_{k,i})!} \left( \frac{1}{i}\sum_{N=q}^{n}{(a_N)^i }\right)^{y_{k,i}}}}.$$
\end{remark}
\begin{proof}
1. Base Case: verify true for $n=q$, $\forall m \in \mathbb{N}$.
\begin{equation*}
\begin{split}
(-1)^m\sum_{\sum{i.y_{k,i}}=m}{\prod_{i=1}^{m}{\frac{(-1)^{y_{k,i}}}{(y_{k,i})!} \left( \frac{1}{i}\sum_{N=q}^{q}{(a_N)^i }\right)^{y_{k,i}}}}
&=(-1)^m\sum_{\sum{i.y_{k,i}}=m}{\prod_{i=1}^{m}{\frac{(-1)^{y_{k,i}}}{(y_{k,i})!i^{y_{k,i}}} \left(a_q\right)^{i.y_{k,i}}}} \\
&=(-1)^m\sum_{\sum{i.y_{k,i}}=m}{\left(a_q\right)^{\sum{i.y_{k,i}}}\prod_{i=1}^{m}{\frac{(-1)^{y_{k,i}}}{(y_{k,i})!i^{y_{k,i}}}}} \\
&=\left(-a_q\right)^{m}\sum_{\sum{i.y_{k,i}}=m}{\prod_{i=1}^{m}{\frac{(-1)^{y_{k,i}}}{(y_{k,i})!i^{y_{k,i}}} }} 
.\end{split}
\end{equation*}
By applying Lemma \ref{Lemma 3.1}, we get 
$$
(-1)^m\sum_{\sum{i.y_{k,i}}=m}{\prod_{i=1}^{m}{\frac{(-1)^{y_{k,i}}}{(y_{k,i})!} \left( \frac{1}{i}\sum_{N=q}^{q}{(a_N)^i }\right)^{y_{k,i}}}}
=
\begin{cases}
(-a_q)^0(-1)^0=1 & $for $ m=0, \\
(-a_q)^1(-1)^1=a_q & $for $ m=1, \\
(-a_q)^m(0)=0 & $for $ m \geq 2.
\end{cases}
$$
Likewise, 
$$
\sum_{q \leq N_1 < \cdots <N_m \leq q}{a_{N_m}\cdots a_{N_1}}
=
\begin{cases}
1 & $for $ m=0, \\
\sum_{q \leq N_1 \leq q}{a_{N_1}}=a_q & $for $ m=1, \\
\sum_{q \leq N_1 < \cdots <N_m \leq q}{a_{N_m}\cdots a_{N_1}}=0 & $for $ m \geq 2.
\end{cases}
$$
2. Induction hypothesis: assume the statement is true until $n$, $\forall m \in \mathbb{N}$.
$$\sum_{q \leq N_1 < \cdots <N_m \leq n}{a_{N_m}\cdots a_{N_1}}
=(-1)^m\sum_{\sum{i.y_{k,i}}=m}{\prod_{i=1}^{m}{\frac{(-1)^{y_{k,i}}}{(y_{k,i})!} \left( \frac{1}{i}\sum_{N=q}^{n}{(a_N)^i }\right)^{y_{k,i}}}}
.$$
3. Induction step: we will show that this statement is true for $(n+1)$, $\forall m \in \mathbb{N}$. \\
We have to show the following statement to be true:  
$$
\sum_{q \leq N_1 < \cdots <N_m \leq n+1}{a_{N_m}\cdots a_{N_1}}
=(-1)^m\sum_{\sum{i.y_{k,i}}=m}{\prod_{i=1}^{m}{\frac{(-1)^{y_{k,i}}}{(y_{k,i})!} \left( \frac{1}{i}\sum_{N=q}^{n+1}{(a_N)^i }\right)^{y_{k,i}}}}
.$$
$ \\ $
\begin{dmath*}
(-1)^m\sum_{\sum{i.y_{k,i}}=m}{\prod_{i=1}^{m}{\frac{(-1)^{y_{k,i}}}{(y_{k,i})!} \left( \frac{1}{i}\sum_{N=q}^{n+1}{(a_N)^i }\right)^{y_{k,i}}}}
=(-1)^m\sum_{\sum{i.y_{k,i}}=m}{\prod_{i=1}^{m}{\frac{(-1)^{y_{k,i}}}{(y_{k,i})!i^{y_{k,i}}} \left(\sum_{N=q}^{n}{(a_N)^i}+(a_{n+1})^i\right)^{y_{k,i}}}}
.\end{dmath*}
The binomial theorem states that 
$$
(u+v)^n=\sum_{\varphi=0}^{n}{\binom{n}{\varphi}u^{n-\varphi}v^{\varphi}}
.$$
Hence, 
\begin{equation*}
\begin{split}
\left(\sum_{N=q}^{n}{(a_N)^i}+(a_{n+1})^i\right)^{y_{k,i}}
&=\sum_{\varphi_{}=0}^{y_{k,i}}{\binom{y_{k,i}}{\varphi_{}}{\left(\sum_{N=q}^{n}{(a_N)^i}\right)}^{\varphi_{}}{\left((a_{n+1})^i\right)}^{y_{k,i}-\varphi_{}}}
.\end{split}
\end{equation*}
Thus, 
\begin{dmath*}
(-1)^m\sum_{\sum{i.y_{k,i}}=m}{\prod_{i=1}^{m}{\frac{(-1)^{y_{k,i}}}{(y_{k,i})!} \left( \frac{1}{i}\sum_{N=q}^{n+1}{(a_N)^i }\right)^{y_{k,i}}}}
=(-1)^m\sum_{\sum{i.y_{k,i}}=m}{\prod_{i=1}^{m}{\frac{(-1)^{y_{k,i}}}{(y_{k,i})!i^{y_{k,i}}}\sum_{\varphi_{}=0}^{y_{k,i}}{\binom{y_{k,i}}{\varphi_{}}{\left(\sum_{N=q}^{n}{(a_N)^i}\right)}^{\varphi_{}}{\left((a_{n+1})^i\right)}^{y_{k,i}-\varphi_{}}}}}
=(-1)^m\sum_{\sum{i.y_{k,i}}=m}{\prod_{i=1}^{m}{\sum_{\varphi_{}=0}^{y_{k,i}}{\frac{(-1)^{y_{k,i}}}{(y_{k,i})!i^{y_{k,i}}}\binom{y_{k,i}}{\varphi_{}}{\left(\sum_{N=q}^{n}{(a_N)^i}\right)}^{\varphi_{}}{\left(a_{n+1}\right)}^{i.y_{k,i}-i.\varphi_{}}}}}
.\end{dmath*}
Let $A_{\varphi_{},i,k}={\frac{(-1)^{y_{k,i}}}{(y_{k,i})!i^{y_{k,i}}}\binom{y_{k,i}}{\varphi_{}}{\left(\sum_{N=q}^{n}{(a_N)^i}\right)}^{\varphi_{}}{\left(a_{n+1}\right)}^{i.y_{k,i}-i.\varphi_{}}}$.
By expanding then regrouping, it can be seen that 
$$
\prod_{i=1}^{m}{\sum_{\varphi_{}=0}^{y_{k,i}}{A_{\varphi_{},i,k}}}
=\sum_{\varphi_{m}=0}^{y_{k,m}} {\cdots \sum_{\varphi_{1}=0}^{y_{k,1}}{\prod_{i=1}^{m}{A_{\varphi_{i},i,k}}}}
=\sum_{0 \leq \varphi_{i} \leq y_{k,i}}{\prod_{i=1}^{m}{A_{\varphi_{i},i,k}}}
.$$
This is because, for any given $k$, by expanding the product of sums (the left hand side term), we will get a sum of products of the form $A_{\varphi_1,1}A_{\varphi_2,2} \cdots A_{\varphi_m,m}$ ($\prod_{i=1}^{m}{A_{\varphi_i,i}}$) for all combinations of $\varphi_1,\varphi_2, \ldots ,\varphi_m$ such that $0 \leq \varphi_1 \leq y_{k,1}, \cdots ,0 \leq \varphi_m \leq y_{k,m}$, which is equivalent to the right hand side term. 
This then can be written more compactly by expressing the repeated sum over the $\varphi_i$'s with one sum that combines all the conditions. The set of conditions $0\leq \varphi_1 \leq y_{k,1}, \ldots ,0\leq \varphi_m \leq y_{k,m}$ can be expressed as the condition $0 \leq \varphi_i \leq y_{k,i}$ for $i \in [1,m]$. \\
Hence, 
\begin{dmath*}
(-1)^m\sum_{\sum{i.y_{k,i}}=m}{\prod_{i=1}^{m}{\frac{(-1)^{y_{k,i}}}{(y_{k,i})!} \left( \frac{1}{i}\sum_{N=q}^{n+1}{(a_N)^i }\right)^{y_{k,i}}}}
=(-1)^m\sum_{\sum{i.y_{k,i}}=m}{\sum_{0 \leq \varphi_{i} \leq y_{k,i}}{\prod_{i=1}^{m}{\frac{(-1)^{y_{k,i}}}{(y_{k,i})!i^{y_{k,i}}}\binom{y_{k,i}}{\varphi_{i}}{\left(\sum_{N=q}^{n}{(a_N)^i}\right)}^{\varphi_{i}}{\left(a_{n+1}\right)}^{i.y_{k,i}-i.\varphi_{i}}}}}
.\end{dmath*}
Similarly, let $j$ represent $\sum{i.\varphi_{i}}$. Hence, we can add the trivial condition that is $j=\sum{i.\varphi_{i}}$ to the sum over $\varphi_i$. Additionally, \\
$\sum{i.\varphi_{i}}=j$ is minimal when $\varphi_1=0, \ldots , \varphi_m=0$. Hence $j_{min}=0$. \\  
$\sum{i.\varphi_{i}}=j$ is maximal when $\varphi_1=y_{k,1}, \ldots , \varphi_m=y_{k,m}$. Hence $j_{max}=\sum{i.y_{k,i}}=m$. \\ 
Therefore, we have that $0 \leq j \leq m$ or equivalently that $j$ can go from $0$ to $m$. Hence, knowing that adding a true statement to a condition does not change the condition, we can add this additional condition to get 
\begin{dmath*}
(-1)^m\sum_{\sum{i.y_{k,i}}=m}{\prod_{i=1}^{m}{\frac{(-1)^{y_{k,i}}}{(y_{k,i})!} \left( \frac{1}{i}\sum_{N=q}^{n+1}{(a_N)^i }\right)^{y_{k,i}}}}
=(-1)^m\sum_{\sum{i.y_{k,i}}=m}{\sum_{\substack{j=0 \\ \sum {i.\varphi_i}=j \\ 0 \leq \varphi_i \leq y_{k,i}}}^{m}{ \prod_{i=1}^{m}{\frac{(-1)^{y_{k,i}}}{(y_{k,i})!i^{y_{k,i}}}\binom{y_{k,i}}{\varphi_{i}}{\left(\sum_{N=q}^{n}{(a_N)^i}\right)}^{\varphi_{i}}{\left(a_{n+1}\right)}^{i.y_{k,i}-i.\varphi_{i}}}}}
.\end{dmath*}
Knowing that $\binom{y_{k,i}}{\varphi_i}=0$ if $\varphi_i>y_{k,i}$, hence, the terms produced for $\varphi_i>y_{k,i}$ would be zero. Thus, we can remove the condition $0 \leq \varphi_i \leq y_{k,i}$ because terms that do not satisfy this condition will be zeros and, therefore, would not change the value of the sum. 
\begin{dmath*}
(-1)^m\sum_{\sum{i.y_{k,i}}=m}{\prod_{i=1}^{m}{\frac{(-1)^{y_{k,i}}}{(y_{k,i})!} \left( \frac{1}{i}\sum_{N=q}^{n+1}{(a_N)^i }\right)^{y_{k,i}}}}
=(-1)^m\sum_{\sum{i.y_{k,i}}=m}{\sum_{\substack{j=0 \\ \sum {i.\varphi_i}=j}}^{m}{ \prod_{i=1}^{m}{\frac{(-1)^{y_{k,i}}}{(y_{k,i})!i^{y_{k,i}}}\binom{y_{k,i}}{\varphi_{i}}{\left(\sum_{N=q}^{n}{(a_N)^i}\right)}^{\varphi_{i}}{\left(a_{n+1}\right)}^{i.y_{k,i}-i.\varphi_{i}}}}}
.\end{dmath*}
We expand the expression then, from all values of $k$ (from every partitions $(y_{k,1}, \cdots , y_{k,m})$ of $m$), we regroup together the terms having a combination of exponents $(\varphi_1, \cdots , \varphi_m)$ that forms a partition of the same integer $j$ and we do so $\forall j \in [0,m]$. 
Hence, performing this manipulation allows us to interchange the sum over $k$ (over $\sum{i.y_{k,i}}=m$) with the sums over $j$.
Thus, the expression becomes as follows, 
\begin{dmath*}
(-1)^m\sum_{\sum{i.y_{k,i}}=m}{\prod_{i=1}^{m}{\frac{(-1)^{y_{k,i}}}{(y_{k,i})!} \left( \frac{1}{i}\sum_{N=q}^{n+1}{(a_N)^i }\right)^{y_{k,i}}}}
=(-1)^m\sum_{\substack{j=0 \\ \sum {i.\varphi_i}=j}}^{m}{\sum_{\sum{i.y_{k,i}}=m}{ \prod_{i=1}^{m}{\frac{(-1)^{y_{k,i}}}{(y_{k,i})!i^{y_{k,i}}}\binom{y_{k,i}}{\varphi_{i}}{\left(\sum_{N=q}^{n}{(a_N)^i}\right)}^{\varphi_{i}}{\left(a_{n+1}\right)}^{i.y_{k,i}-i.\varphi_{i}}}}}
=(-1)^m\sum_{\substack{j=0 \\ \sum {i.\varphi_i}=j}}^{m}{\sum_{\sum{i.y_{k,i}}=m}{{\left(a_{n+1}\right)}^{\sum{i.y_{k,i}}-\sum{i.\varphi_{i}}}\left[\prod_{i=1}^{m}{{\left(\sum_{N=q}^{n}{(a_N)^i}\right)}^{\varphi_{i}}}\right]\left[\prod_{i=1}^{m}{\frac{(-1)^{y_{k,i}}}{(y_{k,i})!i^{y_{k,i}}}\binom{y_{k,i}}{\varphi_{i}}}\right]}}
=(-1)^m\sum_{\substack{j=0 \\ \sum {i.\varphi_i}=j}}^{m}{{\left(a_{n+1}\right)}^{m-j}\left[\prod_{i=1}^{m}{{\left(\sum_{N=q}^{n}{(a_N)^i}\right)}^{\varphi_{i}}}\right]\left(\sum_{\sum{i.y_{k,i}}=m}{\prod_{i=1}^{m}{\frac{(-1)^{y_{k,i}}}{(y_{k,i})!i^{y_{k,i}}}\binom{y_{k,i}}{\varphi_{i}}}}\right)}
.\end{dmath*}
Applying Lemma \ref{Lemma 3.2}, we have 
\begin{dmath*}
\sum_{\sum{i.y_{k,i}}=m}{\prod_{i=1}^{m}{\frac{(-1)^{y_{k,i}}}{(y_{k,i})!i^{y_{k,i}}}\binom{y_{k,i}}{\varphi_{i}}}}
=
\begin{cases}
(-1)^{m-j} \prod_{i=1}^{m}{\frac{(-1)^{\varphi_i}}{i^{\varphi_{i}} (\varphi_{i})!}} & $for $ 0 \leq m-j \leq 1, \\
0 & $for $ m-j \geq 2, \\
\end{cases}
=
\begin{cases}
(-1)^{m-j} \prod_{i=1}^{m}{\frac{(-1)^{\varphi_i}}{i^{\varphi_{i}} (\varphi_{i})!}} & $for $ m-1 \leq j \leq m, \\
0 & $for $ 0 \leq j \leq m-2. \\
\end{cases}
\end{dmath*}
Hence, 
\begin{dmath*}
(-1)^m\sum_{\sum{i.y_{k,i}}=m}{\prod_{i=1}^{m}{\frac{(-1)^{y_{k,i}}}{(y_{k,i})!} \left( \frac{1}{i}\sum_{N=q}^{n+1}{(a_N)^i }\right)^{y_{k,i}}}}
=(-1)^m\sum_{\substack{j=m-1 \\ \sum {i.\varphi_i}=j}}^{m}{{\left(a_{n+1}\right)}^{m-j}\left[\prod_{i=1}^{m}{{\left(\sum_{N=q}^{n}{(a_N)^i}\right)}^{\varphi_{i}}}\right](-1)^{m-j}\left(\prod_{i=1}^{m}{\frac{(-1)^{\varphi_i}}{i^{\varphi_{i}} (\varphi_{i})!}}\right)}
=\sum_{\substack{j=m-1 \\ \sum {i.\varphi_i}=j}}^{m}{{\left(a_{n+1}\right)}^{m-j}(-1)^{j}\left(\prod_{i=1}^{m}{\frac{(-1)^{\varphi_i}}{i^{\varphi_{i}} (\varphi_{i})!}}{\left(\sum_{N=q}^{n}{(a_N)^i}\right)}^{\varphi_{i}}\right)}
.\end{dmath*} 
Knowing that for any given value of $j$ there is  multiple combinations of $\varphi_1, \ldots , \varphi_m$ that satisfy $\sum{i.\varphi_i}=j$. Hence, every value of $j$ corresponds to a sum of the sum's argument for all partitions of $j$ (for all combinations of $\varphi_1, \ldots , \varphi_m$ satisfying $\sum{i.\varphi_i}=j$). Therefore, we can split the outer sum with two conditions into two sums each with one of the conditions as follows, 
\begin{dmath*}
(-1)^m\sum_{\sum{i.y_{k,i}}=m}{\prod_{i=1}^{m}{\frac{(-1)^{y_{k,i}}}{(y_{k,i})!} \left( \frac{1}{i}\sum_{N=q}^{n+1}{(a_N)^i }\right)^{y_{k,i}}}}
=\sum_{j=m-1}^{m}{{\left(a_{n+1}\right)}^{m-j}(-1)^{j}\sum_{\sum {i.\varphi_i}=j}{\left(\prod_{i=1}^{m}{\frac{(-1)^{\varphi_i}}{i^{\varphi_{i}} (\varphi_{i})!}}{\left(\sum_{N=q}^{n}{(a_N)^i}\right)}^{\varphi_{i}}\right)}}
.\end{dmath*} 
Knowing that the largest element of a partition of $j$ is $j$, 
\begin{dmath*}
(-1)^m\sum_{\sum{i.y_{k,i}}=m}{\prod_{i=1}^{m}{\frac{(-1)^{y_{k,i}}}{(y_{k,i})!} \left( \frac{1}{i}\sum_{N=q}^{n+1}{(a_N)^i }\right)^{y_{k,i}}}}
=\sum_{j=m-1}^{m}{{\left(a_{n+1}\right)}^{m-j}(-1)^{j}\sum_{\sum {i.\varphi_i}=j}{\left(\prod_{i=1}^{j}{\frac{(-1)^{\varphi_i}}{i^{\varphi_{i}} (\varphi_{i})!}}{\left(\sum_{N=q}^{n}{(a_N)^i}\right)}^{\varphi_{i}}\right)}}
.\end{dmath*} 
By using the induction hypothesis, the expression becomes 
\begin{equation*}
\begin{split}
&(-1)^m\sum_{\sum{i.y_{k,i}}=m}{\prod_{i=1}^{m}{\frac{(-1)^{y_{k,i}}}{(y_{k,i})!} \left( \frac{1}{i}\sum_{N=q}^{n+1}{(a_N)^i }\right)^{y_{k,i}}}} \\
&\,\,=\sum_{j=m-1}^{m}{\left(a_{n+1}\right)^{m-j}\left(\sum_{q \leq N_1 < \cdots < N_j \leq n}{a_{N_j} \cdots a_{N_1}}\right)} \\
&\,\,=\left(\sum_{q \leq N_1 < \cdots < N_m \leq n}{a_{N_m} \cdots a_{N_1}}\right)
+\left(a_{n+1}\right)\left(\sum_{q \leq N_1 < \cdots < N_{m-1} \leq n}{a_{N_{m-1}} \cdots a_{N_1}}\right)
\end{split}
.\end{equation*}
Using  Lemma \ref{Lemma 2.1}, we get 
$$
(-1)^m\sum_{\sum{i.y_{k,i}}=m}{\prod_{i=1}^{m}{\frac{(-1)^{y_{k,i}}}{(y_{k,i})!} \left( \frac{1}{i}\sum_{N=q}^{n+1}{(a_N)^i }\right)^{y_{k,i}}}}
=\sum_{q \leq N_1 < \cdots <N_m \leq n+1}{a_{N_m}\cdots a_{N_1}}
.$$
The theorem is proven by induction. 
\end{proof}
\begin{corollary} \label{Corollary 3.1}
If the multiple sum starts at 1, Theorem \ref{Theorem 3.1} becomes 
$$\sum_{1 \leq N_1 < \cdots <N_m \leq n}{a_{N_m}\cdots a_{N_1}}
=(-1)^m\sum_{\sum{i.y_{k,i}}=m}{\prod_{i=1}^{m}{\frac{(-1)^{y_{k,i}}}{(y_{k,i})!} \left( \frac{1}{i}\sum_{N=1}^{n}{(a_N)^i }\right)^{y_{k,i}}}}.$$
\end{corollary}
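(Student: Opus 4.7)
The statement is Corollary 3.1, which asserts the reduction formula in the special case where the lower summation index equals $1$. Since Theorem 3.1 has already been established for arbitrary $q \in \mathbb{N}$, the plan is simply to specialize it.

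My approach would be a one-line derivation: invoke Theorem 3.1 with $q$ set to $1$. The hypothesis of Theorem 3.1 requires $m,q,n \in \mathbb{N}$ with $n \geq q+m-1$; taking $q=1$ gives the natural constraint $n \geq m$, which is exactly the non-trivial range in which the multiple sum $\sum_{1 \leq N_1 < \cdots < N_m \leq n}{a_{N_m}\cdots a_{N_1}}$ is non-vanishing. The sequences $a_{N_1},\ldots,a_{N_m}$ appearing here are all identical, but this is harmless: Theorem 3.1 was proved allowing arbitrary sequences, and identical sequences are a particular case.

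Substituting $q=1$ into the formula
$$\sum_{q \leq N_1 < \cdots <N_m \leq n}{a_{N_m}\cdots a_{N_1}}
=(-1)^m\sum_{\substack{k \\ \sum{i.y_{k,i}}=m}}{\prod_{i=1}^{m}{\frac{(-1)^{y_{k,i}}}{(y_{k,i})!} \left( \frac{1}{i}\sum_{N=q}^{n}{(a_N)^i }\right)^{y_{k,i}}}}$$
immediately yields the stated corollary, with the inner simple sum $\sum_{N=q}^{n}(a_N)^i$ becoming $\sum_{N=1}^{n}(a_N)^i$. There is no obstacle: the corollary is a direct notational specialization, included to highlight the practically important case where the lower bound is the conventional starting index $1$ (as in MZVs and most classical applications discussed earlier in the paper).
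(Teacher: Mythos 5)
Your proposal is correct and coincides with the paper's treatment: Corollary \ref{Corollary 3.1} is obtained by the direct substitution $q=1$ into Theorem \ref{Theorem 3.1}, and the paper offers no further argument. One small slip worth noting: Theorem \ref{Theorem 3.1} is already stated for a single sequence $a_N$ (the distinct-sequence generalization is Theorem \ref{Theorem 3.2}), so there is no need to invoke ``arbitrary sequences'' and then specialize to identical ones; the specialization here is purely in the lower bound $q$.
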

\begin{corollary} \label{Corollary 3.2}
Let $m,n \in \mathbb{N}$, we have that 
$$(-1)^m\sum_{\sum{i.y_{k,i}}=m}{\prod_{i=1}^{m}{\frac{(-1)^{y_{k,i}}}{(y_{k,i})!} \left( \frac{n}{i}\right)^{y_{k,i}}}}=\binom{n}{m}.$$
\end{corollary}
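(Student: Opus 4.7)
The plan is to apply the reduction theorem (Theorem \ref{Theorem 3.1}, or equivalently its normalized version Corollary \ref{Corollary 3.1}) to the constant sequence $a_N = 1$ on the interval $[1,n]$. This is the natural test case because it collapses both sides of the reduction formula into elementary quantities, and it matches the structure of the claimed identity, which depends only on $n$ and $m$ and not on any sequence data.

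First I would evaluate the left-hand side of Corollary \ref{Corollary 3.1} under this specialization. With $a_N \equiv 1$, each summand $a_{N_m}\cdots a_{N_1}$ equals $1$, so
$$
\sum_{1 \le N_1 < \cdots < N_m \le n} a_{N_m}\cdots a_{N_1}
= \#\bigl\{(N_1,\ldots,N_m) : 1 \le N_1 < \cdots < N_m \le n\bigr\}
= \binom{n}{m},
$$
which is just the number of $m$-element subsets of $\{1,\ldots,n\}$.

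Next I would simplify the right-hand side of Corollary \ref{Corollary 3.1} under the same choice. For every $i \ge 1$ we have $\sum_{N=1}^{n}(a_N)^i = \sum_{N=1}^{n} 1 = n$, so each inner factor reduces to
$$
\left(\frac{1}{i}\sum_{N=1}^{n}(a_N)^i\right)^{y_{k,i}}
= \left(\frac{n}{i}\right)^{y_{k,i}},
$$
and the entire right-hand side becomes precisely the partition sum that appears in Corollary \ref{Corollary 3.2}. Equating the two evaluations yields the identity.

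I do not anticipate any real obstacle: the corollary is a direct specialization of the reduction theorem, and the only content is the observation that the constant sequence turns the multiple sum into a set-counting problem while turning every inner simple sum into the integer $n$. The interesting point is conceptual rather than technical, namely that the identity exhibits $\binom{n}{m}$ as a signed sum over the partitions of $m$, in line with the paper's broader theme of partition identities.
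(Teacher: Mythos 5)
Your proof is correct and follows essentially the same route as the paper: both specialize the reduction theorem to the constant sequence $a_N \equiv 1$ so that the multiple sum counts the strictly increasing $m$-tuples in $\{1,\ldots,n\}$ and each inner sum collapses to $n$. The only difference is cosmetic — you identify $\sum_{1 \le N_1 < \cdots < N_m \le n} 1 = \binom{n}{m}$ directly as a subset count, whereas the paper reaches the same fact by citing a repeated-sum identity and shifting indices, so your version of that step is if anything cleaner.
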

\begin{proof}
From paper \cite{RepeatedSums}, we have the following relation, 
$$
\sum_{N_m=0}^{n-m}{\cdots \sum_{N_1=0}^{N_2}{1}}=\binom{n}{m}
.$$
We shift the variables in the following way,  
\begin{dmath*}
\sum_{q \leq N_1 < \cdots < N_m \leq n}{1}
=\sum_{N_m=m}^{n}{\sum_{N_{m-1}=m-1}^{N_m-1}\cdots \sum_{N_1=1}^{N_2-1}{1}}
=\sum_{N_m=0}^{n-m}{\sum_{N_{m-1}=m-1}^{N_m+m-1}{\cdots \sum_{N_1=1}^{N_2-1}{1}}}
=\sum_{N_m=0}^{n-m}{\sum_{N_{m-1}=0}^{N_m}{\sum_{N_{m-2}=m-2}^{N_{m-1}+m-2}{\cdots \sum_{N_1=1}^{N_2-1}{1}}}}
.\end{dmath*}
After completing the shifting for all variables, we get 
$$
\sum_{q \leq N_1 < \cdots < N_m \leq n}{1}
=\sum_{N_m=0}^{n-m}{\cdots \sum_{N_1=0}^{N_2}{1}}
=\binom{n}{m}
.$$
By applying Theorem \ref{Theorem 3.1}, we get 
$$
(-1)^m\sum_{\sum{i.y_{k,i}}=m}{\prod_{i=1}^{m}{\frac{(-1)^{y_{k,i}}}{(y_{k,i})!} \left( \frac{1}{i}\sum_{N=1}^{n}{1}\right)^{y_{k,i}}}}
=(-1)^m\sum_{\sum{i.y_{k,i}}=m}{\prod_{i=1}^{m}{\frac{(-1)^{y_{k,i}}}{(y_{k,i})!} \left( \frac{n}{i}\right)^{y_{k,i}}}}
=\binom{n}{m}
.$$
\end{proof}
\begin{example}
{\em For $n=1$, we get Lemma \ref{Lemma 3.1}, }
$$
\sum_{\sum{i.y_{k,i}}=m}{\prod_{i=1}^{m}{\frac{(-1)^{y_{k,i}}}{(y_{k,i})!i^{y_{k,i}}}}}
=(-1)^m\binom{1}{m}
=
\begin{cases}
(-1)^m & $ {\em for } $ 0 \leq m \leq 1, \\  
0 & $ {\em for } $ m \geq 2. \\ 
\end{cases}
$$
\end{example}
\begin{example}
{\em For $n=2$, }
$$
\sum_{\sum{i.y_{k,i}}=m}{\prod_{i=1}^{m}{\frac{(-2)^{y_{k,i}}}{(y_{k,i})!i^{y_{k,i}}}}}
=(-1)^m\binom{2}{m}
.$$
\end{example}
\begin{example}
{\em For $n=m$, }
$$
\sum_{\sum{i.y_{k,i}}=m}{\prod_{i=1}^{m}{\frac{(-m)^{y_{k,i}}}{(y_{k,i})!i^{y_{k,i}}}}}
=(-1)^m\binom{m}{m}=(-1)^m 
.$$
\end{example}
\begin{corollary} \label{Corollary 3.3}
For any $q,n \in \mathbb{N}$ where $n \geq q$, we have that 
$$
(-1)^{n-q+1}\sum_{\sum{i.y_{k,i}}=n-q+1}{\prod_{i=1}^{n-q+1}{\frac{(-1)^{y_{k,i}}}{(y_{k,i})!} \left( \frac{1}{i}\sum_{N=q}^{n}{(a_N)^i }\right)^{y_{k,i}}}}
=\prod_{j=q}^{n}{a_j}
.$$
\end{corollary}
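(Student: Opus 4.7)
The plan is to apply the Reduction Theorem (Theorem \ref{Theorem 3.1}) with the specific choice $m = n-q+1$, and then observe that under this choice the original multiple-sum side of that theorem collapses to a single product.

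First, I would set $m = n-q+1$, so that the upper bound $n$ and the lower bound $q$ differ by exactly $m-1$. Then I would examine the multiple sum
$$
P_{m,q,n}
=\sum_{q \leq N_1 < \cdots < N_m \leq n}{a_{N_m}\cdots a_{N_1}}.
$$
The key observation is that the interval $[q,n]$ contains exactly $m = n-q+1$ integers, and I am asking for a strictly increasing tuple of length $m$ inside this interval. There is only one such tuple, namely $(N_1,\ldots,N_m) = (q, q+1, \ldots, n)$. Hence the entire multiple sum reduces to the single term $a_q \cdot a_{q+1} \cdots a_n = \prod_{j=q}^{n} a_j$.

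Next, I would invoke Theorem \ref{Theorem 3.1} to rewrite $P_{m,q,n}$ as
$$
P_{m,q,n}
=(-1)^m\sum_{\sum{i.y_{k,i}}=m}{\prod_{i=1}^{m}{\frac{(-1)^{y_{k,i}}}{(y_{k,i})!} \left( \frac{1}{i}\sum_{N=q}^{n}{(a_N)^i }\right)^{y_{k,i}}}}.
$$
Equating this expression with the collapsed form $\prod_{j=q}^{n} a_j$ obtained above and substituting $m = n-q+1$ yields exactly the claimed identity.

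There is no real obstacle here: the proof is essentially a one-line application of Theorem \ref{Theorem 3.1} combined with the pigeonhole-style observation that a strictly increasing $m$-tuple inside an $m$-element set is unique. The only thing to be careful about is to verify the trivial edge case $q = n$ (so $m=1$), where both sides reduce to $a_q$, and to note that the hypothesis $n \geq q+m-1$ needed for Theorem \ref{Theorem 3.1} is satisfied with equality, which is precisely what forces the collapse to a single tuple.
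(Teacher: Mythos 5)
Your proposal is correct and follows essentially the same route as the paper: observe that for $m=n-q+1$ the only strictly increasing $m$-tuple in $[q,n]$ is $(q,q+1,\ldots,n)$, so the multiple sum collapses to $\prod_{j=q}^{n}a_j$, and then apply Theorem \ref{Theorem 3.1}. No gaps.
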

\begin{proof}
$$
\sum_{q \leq N_1 < \cdots <N_{n-q+1} \leq n}{a_{N_{n-q+1}}\cdots a_{N_1}}
=\sum_{q = N_1, N_2=q+1, \cdots ,N_{n-q+1} = n}{a_{N_{n-q+1}}\cdots a_{N_1}}
=a_n \cdots a_q
.$$
By using Theorem \ref{Theorem 3.1}, we get the desired theorem. 
\end{proof}
\subsection{Particular cases}
In this section, we will apply the reduction formula for the cases of $m$ from $1$ to $4$. These cases were independently proven using two distinct methods (which are omitted here for simplicity). Similarly, these formulas were verified for a certain range of $n$ using a computer program which calculated the right expression as well as the left expression then checked that they were equal. 
\begin{itemize}
\item For $m=1$ 
$$
\sum_{1 \leq N_1 \leq n}{a_{N_1}}
=\sum_{N=1}^{n}{a_{N}}
.$$
\item For $m=2$ 
$$
\sum_{1 \leq N_1 < N_2 \leq n}{a_{N_2}a_{N_1}}
=\frac{1}{2}\left(\sum_{N=1}^{n}{a_{N}}\right)^{2}
-\frac{1}{2}\left(\sum_{N=1}^{n}{\left(a_{N}\right)^{2}}\right)
.$$
\item For $m=3$ 
$$
\sum_{1 \leq N_1 < N_2 < N_3 \leq n}{a_{N_3}a_{N_2}a_{N_1}}
=\frac{1}{6}\left(\sum_{N=1}^{n}{a_{N}}\right)^{3}
-\frac{1}{2}\left(\sum_{N=1}^{n}{a_{N}}\right)\left(\sum_{N=1}^{n}{\left(a_{N}\right)^{2}}\right)
+\frac{1}{3}\left(\sum_{N=1}^{n}{\left(a_{N}\right)^{3}}\right)
.$$
\item For $m=4$ 
\begin{dmath*}
\sum_{1 \leq N_1 < N_2 < N_3 < N_4 \leq n}{a_{N_4}a_{N_3}a_{N_2}a_{N_1}}
=\frac{1}{24}\left(\sum_{N=1}^{n}{a_{N}}\right)^{4}
-\frac{1}{4}\left(\sum_{N=1}^{n}{a_{N}}\right)^{2}\left(\sum_{N=1}^{n}{\left(a_{N}\right)^{2}}\right)
+\frac{1}{3}\left(\sum_{N=1}^{n}{a_{N}}\right)\left(\sum_{N=1}^{n}{\left(a_{N}\right)^{3}}\right)
+\frac{1}{8}\left(\sum_{N=1}^{n}{\left(a_{N}\right)^{2}}\right)^{2}
-\frac{1}{4}\left(\sum_{N=1}^{n}{\left(a_{N}\right)^{4}}\right)
.\end{dmath*}
\end{itemize}
\subsection{General Reduction Theorem}
Let $|A|$ represent the number of elements in a set $A$. Note that if $A$ is a set of sets then $|A|$ represents the number of sets in $A$. \\ 
Let $m$ be a non-negative integer and let $\{(y_{1,1}, \cdots, y_{1,m}),(y_{2,1}, \cdots, y_{2,m}), \cdots \}$ be the set of all partitions of $m$. 
Let us consider the set $M=\{1, \ldots, m\}$: 
The permutation group $S_m$ is the set of all permutations of the set $M$. Let $\sigma \in S_m$ be a permutation of $M$ and let $\sigma (i)$ represent the $i$-th element of this given permutation. The number of such permutations is given by 
\begin{equation}
|S_m|=m!
.\end{equation}
The cycle-type of a permutation $\sigma$ is the ordered set where the $i$-th element represents the number of cycles of size $i$ in the cycle decomposition of $\sigma$. The number of ways of arranging $i$ elements cyclically is $(i-1)!$. The number of possible combinations of $y_{k,i}$ cycles of size $i$ is $[(i-1)!]^{y_{k,i}}$. Hence, the number of permutations having cycle-type $(y_{k,1}, \cdots, y_{k,m})$ is given by 
\begin{equation}
\prod_{i=1}^{m}{[(i-1)!]^{y_{k,i}}}
.\end{equation}
A partition $P$ of a set $M$ is a set of non-empty disjoint subsets of $M$ such that every element of $M$ is present in exactly one of the subsets. Let $P=\{\underbrace{P_{1,1}, \cdots, P_{1,y_1}}_{y_1 \,\, sets}, \cdots, \underbrace{P_{m,1}, \cdots, P_{m,y_m}}_{y_m \,\, sets}\}$ represent a partition of a set of $m$ elements (for our purpose let it be the set $\{1, \ldots, m\}$). $P_{i,y}$ represents the $y$-th subset of order (size) $i$. $y_i$ represents the number of subsets of size $i$ contained in this partition of the set. It is interesting to note that $(y_1, \cdots, y_m)$ will always form a partition of $m$. 
However, the number of partitions of $m$ is smaller than the number of partitions of the set $M$ because there are more than one partition of the set $M$ that can be associated with a given partition of $m$. In fact, we can determine that the number of partitions of the set $M$ associated with the partition $(y_1, \cdots, y_m)$ is given by 
\begin{equation} \label{setpartition}
|\Omega_k|
=\frac{m!}{1!^{y_{k,1}}\cdots m!^{y_{k,m}}(y_{k,1})! \cdots (y_{k,m})!}
=\frac{m!}{\prod_{i=1}^{m}{i!^{y_{k,i}}y_{k,i}!}}
.\end{equation}  
where $\Omega_k$ is the set of all partitions of the set $M$ associated the partition $(y_{k,i}, \cdots, y_{k,m})$. This is because the number of ways to divide $m$ objects into $l_1$ groups of $1$ element, $l_2$ groups of $2$ elements, $\cdots$, and $l_m$ groups of $m$ elements is given by 
\begin{equation} 
\frac{m!}{1!^{l_1} \cdots m!^{l_{m}}l_1! \cdots l_m!}
=\frac{m!}{\prod_{i=1}^{m}{i!^{l_{i}}l_i!}}
.\end{equation}  
We will denote by $\Omega$ the set of all partitions of the set $M$. 
Finally, a partition $P$ of a set $M$ is a refinement of a partition $\rho$ of the same set $M$ if every element in $P$ is a subset of an element in $\rho$. We denote this as $P \succeq \rho$. \\
Using the notation introduced, we can formulate a generalization of Theorem \ref{Theorem 3.1} where all sequences are distinct. 
\begin{theorem} \label{Theorem 3.2}
Let $m,n,q \in \mathbb{N}$ such that $n \geq q+m-1$. Let $a_{(1);N}, \ldots , a_{(m);N}$ be $m$ sequences defined in the interval $[q,n]$.
we have that 
\begin{equation*}
\begin{split}
&\sum_{\sigma \in S_m}{\left(\sum_{q \leq N_1 < \cdots < N_m \leq n}{a_{(\sigma(m));N_{m}} \cdots a_{(\sigma(1));N_{1}} }\right)} \\
&=\sum_{\substack{P \in \Omega}}{(-1)^{m-\sum{y_{k,i}}}\prod_{i=1}^{m}{[(i-1)!]^{y_{k,i}} \left[\prod_{g=1}^{y_{k,i}}{\left( \sum_{N=q}^{n}{\prod_{h \in P_{i,g}}{a_{(h);N}}}\right)}\right]}}
.\end{split}
\end{equation*}
\end{theorem}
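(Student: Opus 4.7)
The plan is to identify the LHS as a sum over ordered tuples of distinct indices, and then derive the RHS via inclusion--exclusion on the set-partition lattice of $M = \{1, \ldots, m\}$.

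First I would reindex the outer permutation sum by setting $\tau = \sigma^{-1}$, and observe that as $\tau$ ranges over $S_m$ and $(N_1, \ldots, N_m)$ ranges over strictly increasing tuples in $[q,n]$, the tuple $(M_k)_{k=1}^m := (N_{\tau(k)})_{k=1}^m$ ranges bijectively over all ordered $m$-tuples of pairwise distinct elements of $[q, n]$. Consequently the LHS collapses to
\begin{equation*}
\sum_{\substack{(M_1, \ldots, M_m) \in [q,n]^m \\ M_i \text{ pairwise distinct}}} \prod_{k=1}^{m} a_{(k); M_k}.
\end{equation*}

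Next, for each set partition $\rho \in \Omega$ of $M$, I would introduce
\begin{equation*}
T_\rho = \prod_{B \in \rho}\left(\sum_{N=q}^{n} \prod_{h \in B} a_{(h);N}\right), \qquad S_\rho = \sum_{\pi(M) = \rho} \prod_{k=1}^{m} a_{(k);M_k},
\end{equation*}
where $\pi(M)$ denotes the kernel partition of $(M_1, \ldots, M_m)$, namely the partition of $M$ whose blocks group indices with equal coordinates. Expanding the product defining $T_\rho$ shows $T_\rho = \sum_{\pi} S_\pi$, summed over every $\pi$ whose blocks are unions of blocks of $\rho$ (equivalently, $\rho \succeq \pi$ in the paper's notation). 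The distinct-tuples sum obtained in the first step is precisely $S_{\hat{0}}$, where $\hat{0}$ is the singleton partition.

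Möbius inversion on the partition lattice then yields $S_{\hat{0}} = \sum_{\rho \in \Omega} \mu(\hat{0}, \rho)\, T_\rho$, and the classical formula $\mu(\hat{0}, \rho) = \prod_{B \in \rho}(-1)^{|B|-1}(|B|-1)!$ simplifies, for a partition $\rho$ of integer type $(y_{k,1}, \ldots, y_{k,m})$, to $(-1)^{m - \sum_i y_{k,i}}\prod_{i=1}^{m} [(i-1)!]^{y_{k,i}}$. Substituting the factored expression $T_\rho = \prod_{i=1}^{m}\prod_{g=1}^{y_{k,i}} \sum_{N=q}^{n} \prod_{h \in P_{i,g}} a_{(h);N}$ recovers the RHS term by term.

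The main obstacle is justifying the Möbius-function evaluation, since the partition-lattice machinery is not developed elsewhere in the paper. This gap can be closed by a short self-contained induction on $m$ that peels off the block containing the element $m$, or equivalently by iterated inclusion--exclusion on the pairwise coincidence events $\{M_i = M_j\}$ together with Lemma \ref{Lemma 3.2} applied block-by-block. An elementary alternative is induction on $n$ along the lines of Theorem \ref{Theorem 3.1}, but this would force one to generalise Lemmas \ref{Lemma 3.1} and \ref{Lemma 3.2} to the set-partition setting and appears substantially heavier.
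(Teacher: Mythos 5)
Your proposal is correct, and its skeleton is closer to the paper's proof than it may appear: both arguments reduce the theorem to comparing, for each monomial $a_{(m);M_m}\cdots a_{(1);M_1}$ with kernel partition $\rho$, the multiplicity on the left ($1$ if $\rho$ is the all-singletons partition $\hat{0}$, else $0$) with the signed count $\sum_{P}(-1)^{m-\sum{y_{k,i}}}\prod_{i}[(i-1)!]^{y_{k,i}}$ on the right, the sum running over the refinements $P$ of $\rho$; your identity $T_\rho=\sum_{\pi}S_\pi$ is exactly the paper's multiplicity count read globally rather than monomial by monomial. Where you genuinely diverge is in evaluating that signed sum. The paper notes that the summand equals $\sum \operatorname{sgn}(\sigma)$ over the permutations $\sigma$ whose cycle decomposition has block structure $P$, so the whole sum is the signed order of the Young subgroup stabilizing the tuple, which vanishes unless that subgroup is trivial (a nontrivial stabilizer contains a transposition, hence has equally many even and odd elements). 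You instead recognize the coefficient as the M\"obius function $\mu(\hat{0},P)$ of the partition lattice and invoke M\"obius inversion; the two are equivalent, since the paper's vanishing claim is precisely the defining recurrence $\sum_{P}\mu(\hat{0},P)=\delta_{\hat{0}\rho}$ over the interval below $\rho$. The trade-off is that the paper's parity argument is self-contained and one line, whereas your route imports the closed form $\mu(\hat{0},\rho)=\prod_{B}(-1)^{|B|-1}(|B|-1)!$, which, as you correctly flag, is proved nowhere in the paper; your suggested repairs (induction peeling off the block containing $m$, or iterated inclusion--exclusion on the coincidence events $\{M_i=M_j\}$) would close that gap, though the idea of applying Lemma \ref{Lemma 3.2} block-by-block is less apt, as that lemma concerns integer partitions rather than set partitions. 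On balance your version is the more standard and more readily generalizable argument; the paper's is shorter once the even/odd observation is granted.
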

\begin{remark}
{\em The theorem can also be written as }
\begin{equation*}
\begin{split}
&\sum_{\sigma \in S_m}{\left(\sum_{q \leq N_1 < \cdots < N_m \leq n}{a_{(\sigma(m));N_{m}} \cdots a_{(\sigma(1));N_{1}} }\right)} \\
&\,\,=\sum_{\substack{k \\ \sum{i.y_{k,i}}=m}}{(-1)^{m-\sum{y_{k,i}}}\sum_{\Omega_{k}}{\prod_{i=1}^{m}{[(i-1)!]^{y_{k,i}} \left[\prod_{g=1}^{y_{k,i}}{\left( \sum_{N=q}^{n}{\prod_{h \in P_{i,g}}{a_{(h);N}}}\right)}\right]}}} \\
&\,\,=(-1)^m |S_m|\sum_{\substack{k \\ \sum{i.y_{k,i}}=m}}{\frac{1}{|\Omega_k|}\sum_{\Omega_{k}}{\prod_{i=1}^{m}{\frac{(-1)^{y_{k,i}}}{y_{k,i}! i^{y_{k,i}}} \left[\prod_{g=1}^{y_{k,i}}{\left( \sum_{N=q}^{n}{\prod_{h\in P_{i,g}}{a_{(h);N}}}\right)}\right]}}}.
\end{split}
\end{equation*}
{\em The first form is obtained by regrouping together, from the set of all partitions of the set $\{1, \cdots, m\}$, those who are associated with a given partition of $m$. \\
The second expression is obtained by noting that $$\frac{(-1)^{m}|S_m|}{|\Omega_k|}\prod_{i=1}^{m}{\frac{(-1)^{y_{k,i}}}{y_{k,i}!i^{y_{k,i}}}}=(-1)^{m-\sum{y_{k,i}}}\prod_{i=1}^{m}{{[(i-1)!]^{y_{k,i}}}}.$$
These forms are shown as they can be more easily used to show that this theorem reduces to Theorem \ref{Theorem 3.1} if all sequences are the same. } 
\end{remark}
\begin{proof}
The same terms appear in both sides of the theorem, hence, to prove the theorem, it suffices to prove that they appear with the same multiplicity on both sides. \\
We assume, without lost of generality, that all sequences are distinct. The left hand side term can be written as follows 
\begin{equation*}
\begin{split}
\sum_{\sigma \in S_m}{\left(\sum_{q \leq N_1 < \cdots < N_m \leq n}{a_{(\sigma(m));N_{m}} \cdots a_{(\sigma(1));N_{1}} }\right)} 
=\sum_{\sigma \in S_m}{\left(\sum_{q \leq N_1 < \cdots < N_m \leq n}{a_{(m);N_{\sigma(m)}} \cdots a_{(1);N_{\sigma(1)}} }\right)}
.\end{split}
\end{equation*}
Consider the symmetric group $S_m$ acting on $N=(N_1, \cdots, N_m)$. $N$ has an isotropy group $S_m(N)$ and an associated partition $\rho$ of the set $\{1, \cdots, m\}$. $\rho$ is the set of equivalence classes of the relation given by $a \sim b$ if and only if $N_a=N_b$. $S_m(N)=\{\sigma \in S_m \,\, | \,\, \sigma(i) \sim i\}$. Hence, a term 
\begin{equation} \label{seq}
a_{(m);N_m}\cdots a_{(1);N_1}
\end{equation}
appears, in the left hand side term, once if all the $N_j$'s are distinct and and none otherwise. \\
A term \eqref{seq} appears 
\begin{equation}  \label{rho}
\sum_{P \succeq \rho}{(-1)^{m-\sum{y_{k,i}}}\prod_{i=1}^{m}{{[(i-1)!]^{y_{k,i}}}}}
\end{equation}
times in the right hand side. Note that $\sum{y_{k,i}}$ is equal to the number of sets in $P$. \\
To prove the theorem, one has to show that 
\begin{equation} 
\sum_{P \succeq \rho}{(-1)^{m-\sum{y_{k,i}}}\prod_{i=1}^{m}{{[(i-1)!]^{y_{k,i}}}}}
=
\begin{cases}
1 &  \text{if } |\rho|=m,\\
0 &  \text{otherwise}.
\end{cases}
\end{equation}
We notice that the sign of $(-1)^{m-\sum{y_{k,i}}}\prod_{i=1}^{m}{{[(i-1)!]^{y_{k,i}}}}$ is positive if the permutations of cycle type $P$ are even and negative if they are odd. 
Therefore, \eqref{rho} is the signed sum of the number of even and odd permutations in the isotropy group $S_m(N)$. Let us also note that an isotropy group has the same number of even and odd permutations unless the associated partition $\rho$ is $\{\{1\}, \cdots, \{m\}\}$ ($|\rho|=m$). Hence, \eqref{rho} is zero unless $|\rho|=m$. This concludes our proof of the theorem.   
\end{proof} 
\begin{example}
{\em For $m=2$, Theorem \ref{Theorem 3.2} gives the following, }
\begin{dmath*}
\sum_{q \leq N_1 < N_2 \leq n}{a_{N_{2}}  b_{N_{1}} }
+\sum_{q \leq N_1 < N_2 \leq n}{b_{N_{2}}  a_{N_{1}} }
=\left(\sum_{N=q}^{n}{a_N}\right)\left(\sum_{N=q}^{n}{b_N}\right)
-\left(\sum_{N=q}^{n}{a_N b_N}\right)
.\end{dmath*}
\end{example}
\begin{example}
{\em For $m=3$, Theorem \ref{Theorem 3.2} gives the following, }
\begin{dmath*}
\sum_{\sigma \in S_3}{\left(\sum_{q \leq N_1 < N_2 < N_3 \leq n}{a_{(\sigma(3));N_{3}} a_{(\sigma(2));N_{2}}  a_{(\sigma(1));N_{1}} }\right)}
=\left(\sum_{N=q}^{n}{a_{(1);N}}\right)\left(\sum_{N=q}^{n}{a_{(2);N}}\right)\left(\sum_{N=q}^{n}{a_{(3);N}}\right)
-\left(\sum_{N=q}^{n}{a_{(1);N}}\right)\left(\sum_{N=q}^{n}{a_{(2);N} a_{(3);N}}\right)
-\left(\sum_{N=q}^{n}{a_{(2);N}}\right)\left(\sum_{N=q}^{n}{a_{(1);N} a_{(3);N}}\right)
-\left(\sum_{N=q}^{n}{a_{(3);N}}\right)\left(\sum_{N=q}^{n}{a_{(1);N} a_{(2);N}}\right)
+2\left(\sum_{N=q}^{n}{a_{(1);N} a_{(2);N} a_{(3);N}}\right)
.\end{dmath*}
\end{example}
\section{Applications to polynomials} \label{ApplicationstoPolynomials}
Multiple sums have a variety of applications. However, the most famous one is it's usage in Vi\`ete's formula to relate the coefficients of a polynomial to its roots. This corresponds to the particular case where the sequence $a_N$ represents the roots $r_N$ of the polynomial. In this section, some applications of this type of sums to polynomials will be presented. 
\subsection{Relation between the roots and the coefficients of a polynomial}
Let $P(x)$ be a polynomial of degree $n$, 
$$
P(x)=\sum_{i=0}^{n}{a_i x^i}
=a_n x^n + a_{n-1} x^{n-1} + \cdots + a_1 x +a_0
$$
where $a_i$ is the coefficient of $x^i$. \\
Vi\`ete's theorem allows us to rewrite this polynomial as a product of its factors, 
$$
P(x)=a_n \prod_{i=1}^{n}{(x-r_i)}=a_n (x-r_1) \cdots (x-r_n)
$$
where $r_i$ is the $i$-th root of the polynomial. \\ 
The relation between the roots and the coefficients of a polynomial of degree $n$ (Vi\`ete's formula) is structured as a multiple sum of the roots of the polynomial, 
$$
\frac{a_m}{a_n}=(-1)^{n-m} \sum_{1 \leq N_1 < \cdots < N_{n-m} \leq n}{r_{N_{n-m}} \cdots r_{N_1}}
.$$
This relation can also be written as follows, 
$$
\frac{a_{n-m}}{a_n}=(-1)^{m} \sum_{1 \leq N_1 < \cdots < N_{m} \leq n}{r_{N_{m}} \cdots r_{N_1}}
.$$
This relation linking the roots and coefficients of a polynomial can be simplified by applying the reduction theorem to Vi\`ete's formula.  
\begin{theorem} \label{Theorem 4.1}
Let $P(x)=a_n x^n + \cdots + a_1 x +a_0 =a_n(x^n + \cdots +\frac{a_1}{a_n}x+\frac{a_0}{a_n})$ be a polynomial and let $r_1, \cdots , r_n$ be the roots of this polynomial. The coefficients of this polynomial can be linked to its roots by the following relation, 
$$
\frac{a_{n-m}}{a_n}
=\sum_{\sum{i.y_{k,i}}=m}{\prod_{i=1}^{m}{\frac{(-1)^{y_{k,i}}}{(y_{k,i})!} \left( \frac{1}{i}\sum_{N=1}^{n}{(r_N)^i }\right)^{y_{k,i}}}}
.$$
\end{theorem}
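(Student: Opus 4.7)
The plan is to observe that Theorem 4.1 is essentially a direct corollary of the reduction theorem applied to Vi\`ete's formula. All the heavy lifting has already been done when proving Theorem \ref{Theorem 3.1}, so the proof should amount to a clean substitution.

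First I would recall Vi\`ete's formula, which is stated immediately above the theorem:
$$\frac{a_{n-m}}{a_n} = (-1)^m \sum_{1 \leq N_1 < \cdots < N_m \leq n} r_{N_m} \cdots r_{N_1}.$$
This expresses the ratio of coefficients as a multiple sum of order $m$ in the sequence of roots, with lower bound $q=1$ and upper bound $n$. It therefore fits exactly the hypothesis of the reduction theorem (or, more precisely, Corollary \ref{Corollary 3.1}, which is the specialization to $q=1$).

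Next I would apply Corollary \ref{Corollary 3.1} with the identification $a_N \leftarrow r_N$. This gives
$$\sum_{1 \leq N_1 < \cdots < N_m \leq n} r_{N_m} \cdots r_{N_1} = (-1)^m \sum_{\substack{k \\ \sum i \cdot y_{k,i} = m}} \prod_{i=1}^{m} \frac{(-1)^{y_{k,i}}}{(y_{k,i})!} \left( \frac{1}{i} \sum_{N=1}^{n} (r_N)^i \right)^{y_{k,i}}.$$
Substituting this into Vi\`ete's formula, the two factors of $(-1)^m$ cancel, and we obtain precisely the claimed identity for $a_{n-m}/a_n$.

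There is essentially no obstacle, since both ingredients (Vi\`ete's formula and the reduction theorem) are already available. The only things worth stating carefully in the write-up are the index shift between the two common forms of Vi\`ete's relation ($a_m/a_n$ versus $a_{n-m}/a_n$) and the validity range $0 \le m \le n$, so that the multiple sum over the roots is well-defined. Beyond that, the proof is a one-line substitution and can be presented in a few lines.
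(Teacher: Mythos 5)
Your proposal is correct and matches the paper's own proof exactly: the paper likewise just quotes Vi\`ete's formula in the form $\frac{a_{n-m}}{a_n}=(-1)^m\sum_{1\leq N_1<\cdots<N_m\leq n}{r_{N_m}\cdots r_{N_1}}$ and applies Corollary \ref{Corollary 3.1} with $a_N=r_N$, the two factors of $(-1)^m$ cancelling. Your added remarks about the index shift and the range $0\le m\le n$ are sensible but not part of the paper's (equally terse) argument.
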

\begin{proof}
Vi\`ete's formula is as follows, 
$$
\frac{a_{n-m}}{a_n}=(-1)^{m} \sum_{1 \leq N_1 < \cdots < N_{m} \leq n}{r_{N_{m}} \cdots r_{N_1}}
.$$
Hence, by applying Corollary \ref{Corollary 3.1}, we get the desired theorem. 
\end{proof}
Some relations relating the coefficients of a polynomial to its roots for different values of $m$ have been calculated using Theorem \ref{Theorem 4.1}, and are as follows: 
\begin{itemize}
\item For $m=0$
$$
\frac{a_n}{a_n}=1
.$$
\item For $m=1$
$$
\frac{a_{n-1}}{a_n}=-\sum_{N=1}^{n}{r_{N}}
.$$
\item For $m=2$
$$
\frac{a_{n-2}}{a_n}
=\frac{1}{2}\left(\sum_{N=1}^{n}{r_{N}}\right)^{2}
-\frac{1}{2}\left(\sum_{N=1}^{n}{\left(r_{N}\right)^{2}}\right)
.$$
\item For $m=3$
$$
\frac{a_{n-3}}{a_n}
=-\frac{1}{6}\left(\sum_{N=1}^{n}{r_{N}}\right)^{3}
+\frac{1}{2}\left(\sum_{N=1}^{n}{r_{N}}\right)\left(\sum_{N=1}^{n}{\left(r_{N}\right)^{2}}\right)
-\frac{1}{3}\left(\sum_{N=1}^{n}{\left(r_{N}\right)^{3}}\right)
.$$
\item For $m=4$
\begin{dmath*}
\frac{a_{n-4}}{a_n}
=\frac{1}{24}\left(\sum_{N=1}^{n}{r_{N}}\right)^{4}
-\frac{1}{4}\left(\sum_{N=1}^{n}{r_{N}}\right)^{2}\left(\sum_{N=1}^{n}{\left(r_{N}\right)^{2}}\right)
+\frac{1}{3}\left(\sum_{N=1}^{n}{r_{N}}\right)\left(\sum_{N=1}^{n}{\left(r_{N}\right)^{3}}\right)
+\frac{1}{8}\left(\sum_{N=1}^{n}{\left(r_{N}\right)^{2}}\right)^{2}
-\frac{1}{4}\left(\sum_{N=1}^{n}{\left(r_{N}\right)^{4}}\right)
.\end{dmath*}
\item For $m=n$
$$
\frac{a_0}{a_n}
=(-1)^n \prod_{i=1}^{n}{r_i}
.$$
\end{itemize}
\subsection{Relation between the roots of a polynomial and the roots of its derivatives}
With the formulas developed, we can go beyond just linking the coefficients and roots of a polynomial. In this section, we will develop a formula linking the roots of a polynomial to the roots of its derivatives as illustrated by the following theorem.  
\begin{theorem}
Let $f(x)=\sum_{i=0}^{n}{a_ix^i}=a_n(x-r_1) \cdots (x-r_n)$ be a polynomial of order $n$ and let 
$$
f^{(k)}(x)=\sum_{i=0}^{n-k}{a_{(k);i}x^i}
=\sum_{j=k}^{n}{\frac{j!}{(j-k)!}a_{j}x^{j-k}}
=a_n(x-r_{(k);1}) \cdots (x-r_{(k);n-k})
$$
be its derivative of order $k$. The roots of $f(x)$ can be linked to the roots of $f^{(k)}(x)$ by the following equation, 
\begin{dmath*}
\frac{(n-m)!}{n!}\sum_{\sum{i.y_{k,i}}=m}{\prod_{i=1}^{m}{\frac{(-1)^{y_{k,i}}}{(y_{k,i})!} \left( \frac{1}{i}\sum_{N=1}^{n}{(r_N)^i }\right)^{y_{k,i}}}}
=\frac{(n-m-k)!}{(n-k)!}\sum_{\sum{i.y_{k,i}}=m}{\prod_{i=1}^{m}{\frac{(-1)^{y_{k,i}}}{(y_{k,i})!} \left( \frac{1}{i}\sum_{N=1}^{n-k}{(r_{(k);N})^i }\right)^{y_{k,i}}}}
.\end{dmath*}
\end{theorem}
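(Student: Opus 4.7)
The plan is to apply Theorem \ref{Theorem 4.1} twice: once to $f(x)$ and once to $f^{(k)}(x)$, viewed as a polynomial of degree $n-k$ in its own right, and then match the resulting expressions against each other through the explicit relation between their coefficients. No genuinely new combinatorial content is needed; the reduction theorem does all the heavy lifting.

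First, I would read off the coefficients of $f^{(k)}(x)$ from the definition given in the statement: the coefficient of $x^{j-k}$ in $f^{(k)}(x)$ is $a_{(k);j-k} = \frac{j!}{(j-k)!}\,a_j$ for $k \leq j \leq n$. In particular the leading coefficient is $a_{(k);n-k} = \frac{n!}{(n-k)!}\,a_n$, while the coefficient of $x^{(n-k)-m}$ is $a_{(k);n-k-m} = \frac{(n-m)!}{(n-m-k)!}\,a_{n-m}$. Taking the ratio yields
\[
\frac{a_{(k);n-k-m}}{a_{(k);n-k}} \;=\; \frac{(n-m)!\,(n-k)!}{(n-m-k)!\,n!}\,\cdot\,\frac{a_{n-m}}{a_n}.
\]

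Next, I apply Theorem \ref{Theorem 4.1} to $f(x)$ to write $a_{n-m}/a_n$ as the partition sum over the roots $r_1,\ldots,r_n$, and I apply the same theorem to $f^{(k)}(x)$ (which has degree $n-k$ and roots $r_{(k);1},\ldots,r_{(k);n-k}$) to write $a_{(k);n-k-m}/a_{(k);n-k}$ as the partition sum over those derivative roots. Substituting both expressions into the displayed identity above gives
\[
\frac{(n-m)!\,(n-k)!}{(n-m-k)!\,n!}\sum_{\sum i\,y_{k,i}=m}\prod_{i=1}^m \frac{(-1)^{y_{k,i}}}{(y_{k,i})!}\!\left(\frac{1}{i}\sum_{N=1}^{n}(r_N)^i\right)^{\!y_{k,i}} = \sum_{\sum i\,y_{k,i}=m}\prod_{i=1}^m \frac{(-1)^{y_{k,i}}}{(y_{k,i})!}\!\left(\frac{1}{i}\sum_{N=1}^{n-k}(r_{(k);N})^i\right)^{\!y_{k,i}},
\]
and dividing through by $(n-k)!/(n-m-k)!$ rearranges this into exactly the equality claimed in the theorem.

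The only real obstacle is the bookkeeping for the derivative's coefficients, namely checking that $a_{(k);j-k} = \tfrac{j!}{(j-k)!}a_j$ and hence that the prefactor comes out correctly; this is routine but must be done carefully so that the factorials $(n-m)!/n!$ on one side and $(n-m-k)!/(n-k)!$ on the other land in the right places. One should also note that the statement implicitly requires $m \leq n-k$ (otherwise $(n-m-k)!$ on the right is undefined and the partition sum over $r_{(k);N}$ has no roots to extract the needed coefficient), which is exactly the range in which $a_{(k);n-k-m}$ is well defined. Once the factorial identity above is in hand, the theorem reduces to a direct substitution and no further argument is required.
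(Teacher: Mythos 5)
Your proposal is correct and follows essentially the same route as the paper: compare the coefficients of $f^{(k)}$ with those of $f$ to get $\frac{a_{(k);n-k-m}}{a_{(k);n-k}}=\frac{(n-m)!\,(n-k)!}{(n-m-k)!\,n!}\cdot\frac{a_{n-m}}{a_n}$, then apply Theorem \ref{Theorem 4.1} to both polynomials and rearrange. If anything, your write-up is more careful than the paper's (whose displayed coefficient computation is missing an equals sign and whose leading coefficient for $f^{(k)}$ is misstated as $a_n$), and your remark that the identity implicitly requires $m\leq n-k$ is a worthwhile addition.
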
  
\begin{proof}
By comparing the coefficients of $f(x)$ and $f^{(k)}(x)$, we get 
$$
\frac{a_{(k);n-k-m}}{a_{(k);n-k}}
=\frac{\frac{(n-m)!}{(n-m-k)!}a_{n-m}}{\frac{n!}{(n-k)!}a_{n}}
\frac{\frac{(n-m)!}{n!}}{\frac{(n-m-k)!}{(n-k)!}} \frac{a_{n-m}}{a_n}
.$$
By applying Vi\`ete's formula, we get the desired theorem.  
\end{proof}
A special case of this theorem which is of special interest is the following.  
\begin{corollary} \label{Corollary 4.1}
Let $f(x)=\sum_{i=0}^{n}{a_ix^i}=a_n(x-r_1) \cdots (x-r_n)$ be a polynomial of order $n$ and let 
$$
f^{(k)}(x)=\sum_{i=0}^{n-k}{a_{(k);i}x^i}
=\sum_{j=k}^{n}{\frac{j!}{(j-k)!}a_{j}x^{j-k}}
=a_n(x-r_{(k);1}) \cdots (x-r_{(k);n-k})
$$
be its derivative of order $k$. Let $\overline{x}$ be the average root value for $f(x)$ and $\overline{x_{(k)}}$ be the average root value for $f^{(k)}(x)$. 
$$
\overline{x}
=\frac{r_1 + \cdots + r_n}{n} 
=\frac{r_{(k);1} + \cdots + r_{(k);n-k}}{n-k} 
=\overline{x_{(k)}}
.$$
\end{corollary}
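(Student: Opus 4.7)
The plan is to derive this corollary as the case $m=1$ of the immediately preceding theorem linking the roots of $f$ to those of $f^{(k)}$. For $m=1$, the only partition of $1$ is $(y_{k,1})=(1)$, so both sides of that theorem collapse to a single term. Specifically, the outer sum $\sum_{\sum i\cdot y_{k,i}=1}$ contains only the vector with $y_{k,1}=1$ and $y_{k,i}=0$ for $i\geq 2$, for which the product $\prod_{i=1}^{m}\frac{(-1)^{y_{k,i}}}{(y_{k,i})!}\bigl(\tfrac{1}{i}\sum(r_N)^i\bigr)^{y_{k,i}}$ reduces to $-\sum_{N=1}^{n}r_N$ on the left and $-\sum_{N=1}^{n-k}r_{(k);N}$ on the right.

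Next I would substitute $m=1$ into the prefactors $\frac{(n-m)!}{n!}$ and $\frac{(n-m-k)!}{(n-k)!}$, which become $\frac{1}{n}$ and $\frac{1}{n-k}$ respectively. Putting these together, the theorem yields
\[
\frac{1}{n}\Bigl(-\sum_{N=1}^{n}r_N\Bigr)=\frac{1}{n-k}\Bigl(-\sum_{N=1}^{n-k}r_{(k);N}\Bigr),
\]
and cancelling the minus signs gives exactly $\overline{x}=\overline{x_{(k)}}$, which is the statement of the corollary.

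There is essentially no obstacle here, since all the heavy lifting has already been done in the previous theorem; the only thing to verify is that the $m=1$ specialization of the alternating partition sum is indeed a single term. As a sanity check independent of that theorem, one could also derive the identity directly from Vi\`ete's formula applied to $f$ and $f^{(k)}$ at index $m=1$, using the coefficient relation $a_{(k);j-k}=\frac{j!}{(j-k)!}a_j$ to write
\[
\frac{a_{(k);n-k-1}}{a_{(k);n-k}}=\frac{n-k}{n}\cdot\frac{a_{n-1}}{a_n},
\]
which, after applying Vi\`ete's formula on both sides ($\frac{a_{n-1}}{a_n}=-\sum r_N$ and $\frac{a_{(k);n-k-1}}{a_{(k);n-k}}=-\sum r_{(k);N}$), again produces the desired equality of averages. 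Either route is short and the result follows immediately.
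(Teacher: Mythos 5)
Your proof is correct and follows exactly the route the paper intends: the corollary is presented as the special case $m=1$ of the preceding theorem (the paper gives no written-out proof, only the phrase ``a special case of this theorem''), and your specialization of the partition sum to the single partition $(y_{1})=(1)$ together with the prefactors $\tfrac{1}{n}$ and $\tfrac{1}{n-k}$ is precisely that computation. Your alternative derivation directly from Vi\`ete's formula at index $1$ is also valid and is a nice independent check, but it is not needed.
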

\subsection{Generalized Binomial Theorem}
In this section, we prove a formula for the sum of multiple sums. This identity is then used to prove a generalization of the binomial theorem as well as a few MZV identities. 
\begin{theorem} \label{general sum of multiple sums}
Let $x\in \mathbb{C^*}$ and $n\in \mathbb{N^*}$. We have that 
$$
\sum_{m=0}^{n}{(-1)^m x^{n-m}\sum_{1\leq N_1 < \cdots < N_m \leq n}{r_{N_m}\cdots r_{N_1}}}=(-1)^n(r_1-x)\cdots(r_n-x)
$$
{\em and} 
$$
\sum_{m=0}^{n}{x^{n-m}\sum_{\sum{iy_i}=m}{\prod_{i=1}^{m}{\frac{(-1)^{y_i}}{y_i!}\left(\frac{1}{i}\sum_{N=1}^{n}{(r_N)^i}\right)^{y_i}}}}=(-1)^n(r_1-x)\cdots(r_n-x).
$$
\end{theorem}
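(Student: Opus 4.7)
The plan is to derive both identities by expanding $(r_1-x)\cdots(r_n-x)$ via Viète's theorem and Viète's formula, then to obtain the second identity from the first by applying the Reduction Theorem (Corollary \ref{Corollary 3.1}).

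First I would write $P(x) = a_n(x-r_1)\cdots(x-r_n)$ via Viète's theorem and, comparing with $P(x)=\sum_{m=0}^{n}a_{n-m}x^{n-m}$, read off that
\[
(x-r_1)\cdots(x-r_n) \;=\; \sum_{m=0}^{n}\frac{a_{n-m}}{a_n}\,x^{n-m}.
\]
Next I would substitute Viète's formula $\frac{a_{n-m}}{a_n}=(-1)^{m}\sum_{1\leq N_1<\cdots<N_m\leq n}r_{N_m}\cdots r_{N_1}$ (with the convention that the $m=0$ term is $1$, matching $P_{0,1,n}=1$) to obtain
\[
(x-r_1)\cdots(x-r_n) \;=\; \sum_{m=0}^{n}(-1)^{m}x^{n-m}\!\!\sum_{1\leq N_1<\cdots<N_m\leq n}\!\!r_{N_m}\cdots r_{N_1}.
\]
Multiplying both sides by $(-1)^{n}$ turns the left-hand side into $(r_1-x)\cdots(r_n-x)$ (there are $n$ sign flips in the factors), which yields the first stated identity.

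For the second identity, I would apply Corollary \ref{Corollary 3.1} with $a_{N}=r_{N}$ to rewrite the inner multiple sum as
\[
\sum_{1\leq N_1<\cdots<N_m\leq n}r_{N_m}\cdots r_{N_1} \;=\; (-1)^{m}\sum_{\sum i\, y_i = m}\prod_{i=1}^{m}\frac{(-1)^{y_i}}{y_i!}\left(\frac{1}{i}\sum_{N=1}^{n}(r_N)^{i}\right)^{y_i}.
\]
Substituting this into the first identity makes the two factors of $(-1)^{m}$ cancel, producing exactly the second claimed equality.

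There is no substantial obstacle: every ingredient (Viète's theorem, Viète's formula, and the Reduction Theorem) is available as a stated result in the excerpt. The only points requiring attention are bookkeeping of the sign $(-1)^{n}$ when flipping factors $(x-r_i)\mapsto(r_i-x)$, and verifying that the $m=0$ boundary term is handled correctly via the convention $P_{0,1,n}=1$ (stated in the remarks following Eq.~\eqref{eq2}). Once those are checked, the theorem follows immediately by direct substitution.
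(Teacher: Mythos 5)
Your proposal is correct and follows essentially the same route as the paper: expand $(x-r_1)\cdots(x-r_n)=\sum_{m=0}^{n}\frac{a_{n-m}}{a_n}x^{n-m}$, apply Vi\`ete's formula for the first identity, and then the reduction theorem (the paper invokes Theorem \ref{Theorem 4.1}, which is exactly Vi\`ete's formula combined with Corollary \ref{Corollary 3.1}) for the second. Your sign bookkeeping, including the cancellation of the two $(-1)^m$ factors and the $m=0$ convention, is accurate.
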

\begin{proof}
Let $f(x)=\sum_{m=0}^{n}{a_mx^m}=a_n(x-r_1) \cdots (x-r_n)$ be a polynomial of order $n$. Then 
$$\frac{f(x)}{a_n}
=\sum_{m=0}^{n}{x^m\frac{a_m}{a_n}}
=\sum_{m=0}^{n}{x^{n-m}\frac{a_{n-m}}{a_n}}
=(x-r_1) \cdots (x-r_n)
=(-1)^n(r_1-x) \cdots (r_n-x).$$ 
Applying Vi\`ete's formula, we get the first identity of the theorem. Similarly, applying Theorem \ref{Theorem 4.1} instead leads to the second identity. 
\end{proof}
\begin{corollary} \label{alternating sum of multiple sums}
For any $n\in \mathbb{N^*}$, we have that 
$$
\sum_{m=0}^{n}{(-1)^m\sum_{1\leq N_1 < \cdots < N_m \leq n}{r_{N_m}\cdots r_{N_1}}}=(-1)^n(r_1-1)\cdots(r_n-1)
$$
{\em and} 
$$
\sum_{m=0}^{n}{\sum_{\sum{iy_i}=m}{\prod_{i=1}^{m}{\frac{(-1)^{y_i}}{y_i!}\left(\frac{1}{i}\sum_{N=1}^{n}{(r_N)^i}\right)^{y_i}}}}=(-1)^n(r_1-1)\cdots(r_n-1).
$$
\end{corollary}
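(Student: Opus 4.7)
The plan is to observe that this corollary is nothing more than the special case $x=1$ of Theorem \ref{general sum of multiple sums}, so there is essentially nothing new to prove: I would simply specialize and note that the powers of $x$ collapse trivially.

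First, I would set $x=1$ in the first identity of Theorem \ref{general sum of multiple sums}. Since $1 \in \mathbb{C}^*$, the substitution is legitimate, and because $1^{n-m}=1$ for every $m$, the factor $x^{n-m}$ on the left-hand side disappears. What remains is exactly
$$
\sum_{m=0}^{n}{(-1)^m\sum_{1\leq N_1 < \cdots < N_m \leq n}{r_{N_m}\cdots r_{N_1}}},
$$
while the right-hand side becomes $(-1)^n(r_1-1)\cdots(r_n-1)$, matching the first identity of the corollary verbatim.

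Next, I would perform the same substitution $x=1$ in the second identity of Theorem \ref{general sum of multiple sums}. Once again the factor $x^{n-m}$ vanishes, so the left-hand side collapses to
$$
\sum_{m=0}^{n}{\sum_{\sum{iy_i}=m}{\prod_{i=1}^{m}{\frac{(-1)^{y_i}}{y_i!}\left(\frac{1}{i}\sum_{N=1}^{n}{(r_N)^i}\right)^{y_i}}}},
$$
and the right-hand side is unchanged, giving the second identity.

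There is no real obstacle here: both identities follow by direct specialization of a theorem already established, and no further manipulation of partitions or multiple sums is needed. The only conceptual remark worth including is that the two identities of the corollary are themselves equivalent via Corollary \ref{Corollary 3.1} applied termwise to the $m$-th summand, so one could alternatively derive the second from the first without reinvoking Theorem \ref{Theorem 4.1}; however, the cleanest presentation is simply the two parallel substitutions described above.
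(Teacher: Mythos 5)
Your proof is correct and matches the paper's intent exactly: the corollary is stated without a separate proof precisely because it is the immediate specialization $x=1$ of Theorem \ref{general sum of multiple sums}, which is all you do. Your closing remark that the two identities are termwise equivalent via Corollary \ref{Corollary 3.1} is also accurate.
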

\begin{corollary} \label{sum of multiple sums}
Substituting $r_N$ by $(-r_N)$ in Corollary \ref{alternating sum of multiple sums}, we get the sum of multiple sums.
$$
\sum_{m=0}^{n}{\sum_{1\leq N_1 < \cdots < N_m \leq n}{r_{N_m}\cdots r_{N_1}}}=\prod_{N=1}^{n}{(r_N+1)}.
$$
\end{corollary}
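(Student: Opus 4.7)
The plan is to obtain the identity by a direct substitution in Corollary \ref{alternating sum of multiple sums}, as the author suggests, and then verify that the two sides transform cleanly under this substitution. Since Corollary \ref{alternating sum of multiple sums} holds for arbitrary complex numbers $r_1,\ldots,r_n$, I am free to replace each $r_N$ by $-r_N$ and read off the resulting identity.

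First I would track the left-hand side. After the substitution, each inner multiple sum picks up a factor of $(-1)^m$, since every term $r_{N_m}\cdots r_{N_1}$ is a product of exactly $m$ of the $r_N$'s. Combined with the outer $(-1)^m$ in the original identity, this yields $(-1)^{2m}=1$, so the alternating signs collapse and the left-hand side becomes the unsigned sum of multiple sums that the corollary asserts.

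Next I would track the right-hand side. The substitution turns the product $(r_1-1)\cdots(r_n-1)$ into $(-r_1-1)\cdots(-r_n-1)$, and factoring a $-1$ out of each of the $n$ factors produces an overall $(-1)^n$. This cancels the prefactor $(-1)^n$ already present in Corollary \ref{alternating sum of multiple sums}, leaving exactly $\prod_{N=1}^{n}(r_N+1)$.

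Combining these two observations gives the desired equality. There is no serious obstacle here: the only things to watch for are the parity bookkeeping (making sure the $(-1)^m$ from the inner product and the $(-1)^n$ from the $n$ linear factors are handled correctly) and noting that Corollary \ref{alternating sum of multiple sums} is stated without any restriction on the $r_N$'s, so the substitution $r_N \mapsto -r_N$ is legitimate.
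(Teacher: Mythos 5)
Your proposal is correct and is exactly the argument the paper intends: the corollary's justification is precisely the substitution $r_N \mapsto -r_N$ in Corollary \ref{alternating sum of multiple sums}, and your sign bookkeeping (the $(-1)^m$ from each degree-$m$ term cancelling the alternating sign, and the $(-1)^n$ from the $n$ linear factors cancelling the prefactor) just makes explicit what the paper leaves implicit.
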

\begin{theorem} \label{Generalized BT}
For any $n\in \mathbb{N^*}$, we have that 
\begin{equation*}
\begin{split}
(a_1+b_1)\cdots(a_n+b_n)
&=\left(\prod_{i=1}^{n}{b_i}\right)\sum_{m=0}^{n}{\sum_{1\leq N_1 < \cdots < N_m \leq n}{\frac{a_{N_m}\cdots a_{N_1}}{b_{N_m}\cdots b_{N_1}}}} \\
&=\left(\prod_{i=1}^{n}{b_i}\right)\sum_{m=0}^{n}{(-1)^m\sum_{\sum{iy_i}=m}{\prod_{i=1}^{m}{\frac{(-1)^{y_i}}{y_i!}\left(\frac{1}{i}\sum_{N=1}^{n}{\left(\frac{a_N}{b_N}\right)^i}\right)^{y_i}}}}. 
\end{split}
\end{equation*}
\end{theorem}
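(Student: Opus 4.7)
The plan is to reduce the statement to the already-established sum-of-multiple-sums identity by a simple change of variable. First I would factor $b_i$ out of each factor on the left:
\[
(a_1+b_1)\cdots(a_n+b_n) = \Bigl(\prod_{i=1}^{n} b_i\Bigr)\,\prod_{N=1}^{n}\Bigl(1+\frac{a_N}{b_N}\Bigr),
\]
which of course requires $b_i \neq 0$ for all $i$ (the general case follows by a density/continuity argument, since both sides are polynomials in the $a_i,b_i$). Setting $r_N = a_N/b_N$, the product $\prod_{N=1}^{n}(1+r_N) = \prod_{N=1}^{n}(r_N+1)$ is precisely the object computed by Corollary \ref{sum of multiple sums}.

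Next I would apply Corollary \ref{sum of multiple sums} to this product, yielding
\[
\prod_{N=1}^{n}(r_N+1) = \sum_{m=0}^{n}\sum_{1\leq N_1<\cdots<N_m\leq n}{r_{N_m}\cdots r_{N_1}}.
\]
Substituting back $r_N = a_N/b_N$ and multiplying through by $\prod_{i=1}^{n} b_i$ gives the first form of the theorem.

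For the second form, I would apply Corollary \ref{Corollary 3.1} (the reduction theorem starting at $1$) to each inner multiple sum, with the sequence $a_N$ there playing the role of $r_N = a_N/b_N$ here:
\[
\sum_{1\leq N_1<\cdots<N_m\leq n}{\frac{a_{N_m}\cdots a_{N_1}}{b_{N_m}\cdots b_{N_1}}} = (-1)^m\sum_{\sum{i\,y_i}=m}\prod_{i=1}^{m}\frac{(-1)^{y_i}}{y_i!}\Bigl(\frac{1}{i}\sum_{N=1}^{n}\bigl(\tfrac{a_N}{b_N}\bigr)^i\Bigr)^{y_i}.
\]
Substituting this into the first form and keeping the $(-1)^m$ explicit yields the stated partition expression.

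There is no real obstacle: both identities are essentially immediate consequences of results already proven, and the only subtle point is the hypothesis $b_i\neq 0$, which is harmless because the final identity is polynomial in the $a_i$ and $b_i$ and can be extended by continuity (or verified to hold tautologically when some $b_i = 0$, in which case the corresponding factor $(a_i+b_i)$ simply contributes $a_i$ and one has to interpret the right-hand side as a formal rational identity cleared of denominators).
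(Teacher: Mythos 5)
Your proof is correct and follows essentially the same route as the paper: factor $\prod b_i$ out of the product, set $r_N=a_N/b_N$, and invoke Corollary~\ref{sum of multiple sums} (and then the reduction theorem, Corollary~\ref{Corollary 3.1}, for the partition form). Your added remark about the case $b_i=0$ and the extension by polynomial continuity is a point the paper glosses over, but it does not change the argument.
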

\begin{proof} 
$$
\prod_{i=1}^{n}{(a_i+b_i)}
=\prod_{i=1}^{n}{(b_i)\left(\frac{a_i}{b_i}+1\right)}
=\left(\prod_{i=1}^{n}{b_i}\right)\prod_{i=1}^{n}{\left(\frac{a_i}{b_i}+1\right)}.
$$
Letting $r_i=(a_i/b_i)$ and applying Corollary \ref{sum of multiple sums}, we obtain this theorem. 
\end{proof} 
Additionally, Corollary \ref{sum of multiple sums} can be used to derive several identities related to MZVs. 
\begin{corollary} \label{zeta sum}
The sum of MZVs can be rewritten as a product as follows: 
\begin{equation*} 
\sum_{m=0}^{n}{\zeta_n(\underbrace{p, \ldots, p}_{m \,\,times})}=\prod_{N=1}^{n}{\left(1+\frac{1}{N^p}\right)} 
\,\,\,\,\,\text{and}\,\,\,\,\, 
\sum_{m=0}^{\infty}{\zeta(\underbrace{p, \ldots, p}_{m \,\,times})}=\prod_{N=1}^{\infty}{\left(1+\frac{1}{N^p}\right)} 
.\end{equation*}
\end{corollary}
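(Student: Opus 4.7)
The plan is to apply Corollary \ref{sum of multiple sums} with the specific choice of sequence $r_N = 1/N^p$. Observe that by the definition of the partial multiple zeta value,
$$\zeta_n(\underbrace{p,\ldots,p}_{m\text{ times}}) = \sum_{1\leq N_1 < \cdots < N_m \leq n}\frac{1}{N_1^p\cdots N_m^p} = \sum_{1\leq N_1 < \cdots < N_m \leq n} r_{N_m}\cdots r_{N_1},$$
which is exactly a multiple sum of the form treated in Corollary \ref{sum of multiple sums}. Summing over $m$ from $0$ to $n$ (with the $m=0$ term equal to $1$ by the convention $P_{0,q,n}=1$) and applying that corollary directly yields
$$\sum_{m=0}^{n} \zeta_n(\underbrace{p,\ldots,p}_{m\text{ times}}) = \prod_{N=1}^{n}\left(1+\frac{1}{N^p}\right),$$
which is the first identity.

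For the infinite version, I would pass to the limit $n\to\infty$ on both sides, which requires $p>1$ so that everything converges. The right side converges because $\prod_{N=1}^{\infty}(1+1/N^p)$ converges iff $\sum 1/N^p<\infty$, i.e.\ iff $p>1$. On the left, each individual term $\zeta_n(p,\ldots,p)$ increases monotonically to $\zeta(p,\ldots,p)$ as $n\to\infty$, and by the monotone convergence theorem (all summands are non-negative) the interchange of limit and summation is justified, giving
$$\sum_{m=0}^{\infty}\zeta(\underbrace{p,\ldots,p}_{m\text{ times}}) = \prod_{N=1}^{\infty}\left(1+\frac{1}{N^p}\right).$$

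The main obstacle, such as it is, is this convergence issue for the infinite identity: one must be careful that the bound $p>1$ ensures absolute convergence both of the infinite product and of the double-indexed series over $m$ and the nested indices $N_1<\cdots<N_m$. Everything else is an immediate specialization of the already-established Corollary \ref{sum of multiple sums}, so the proof is essentially a one-line substitution plus a brief convergence remark.
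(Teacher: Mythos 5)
Your proof is correct and follows exactly the paper's route: the paper's own proof is the one-line application of Corollary \ref{sum of multiple sums} with $r_N = 1/N^p$, with the infinite case obtained by letting $n \to \infty$. Your additional care about convergence for $p>1$ (monotone convergence to justify interchanging the limit with the sum over $m$) is a welcome detail that the paper itself omits.
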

\begin{proof}
Applying Corollary \ref{sum of multiple sums} with  $r_N=(1/N^{p})$, we find this identity.
\end{proof}
\begin{corollary} \label{limit of zeta sum}
The sum of MZVs converges to 2 as the argument of the MZVs goes to infinity. 
\begin{equation*} 
\lim_{p \to \infty}{\sum_{m=0}^{n}{\zeta_n(\underbrace{p, \ldots, p}_{m \,\,times})}}
=\lim_{p \to \infty}{\sum_{m=0}^{\infty}{\zeta(\underbrace{p, \ldots, p}_{m \,\,times})}}
=2
.\end{equation*}
\end{corollary}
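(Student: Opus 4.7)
The plan is to pass through Corollary \ref{zeta sum}, which rewrites each sum of MZVs as an infinite or finite product
$$\sum_{m=0}^{n}{\zeta_n(\underbrace{p,\ldots,p}_{m\,\text{times}})}=\prod_{N=1}^{n}{\left(1+\frac{1}{N^p}\right)},\qquad \sum_{m=0}^{\infty}{\zeta(\underbrace{p,\ldots,p}_{m\,\text{times}})}=\prod_{N=1}^{\infty}{\left(1+\frac{1}{N^p}\right)}.$$
The entire statement then reduces to computing the limit as $p\to\infty$ of these products. The $N=1$ factor is always $1+1=2$, and for $N\geq 2$ we have $1+1/N^p\to 1$. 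So I expect both products to tend to $2$, and the proof reduces to justifying the interchange of limit and product.

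For the finite-$n$ case the justification is immediate: the product has only finitely many factors, each a continuous function of $p$, so
$$\lim_{p\to\infty}\prod_{N=1}^{n}{\left(1+\frac{1}{N^p}\right)}=2\cdot\prod_{N=2}^{n}{1}=2.$$

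For the infinite product I would take logarithms, writing
$$\log\prod_{N=1}^{\infty}{\left(1+\frac{1}{N^p}\right)}=\log 2+\sum_{N=2}^{\infty}{\log\!\left(1+\frac{1}{N^p}\right)},$$
and use the elementary bound $0\leq \log(1+x)\leq x$ for $x\geq 0$ to dominate the tail:
$$0\leq \sum_{N=2}^{\infty}{\log\!\left(1+\frac{1}{N^p}\right)}\leq \sum_{N=2}^{\infty}{\frac{1}{N^p}}=\zeta(p)-1.$$
Since $\zeta(p)\to 1$ as $p\to\infty$ (because for $p\geq 2$ one has $\zeta(p)-1=\sum_{N\geq 2}N^{-p}\leq \int_{1}^{\infty}x^{-p}dx=1/(p-1)$), the tail vanishes and the log tends to $\log 2$, giving limit $2$.

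The only mildly subtle point is the legitimacy of the interchange in the infinite case; the logarithm trick and the crude tail bound via $\zeta(p)-1\to 0$ handle it cleanly, so I do not anticipate a real obstacle. The whole argument is essentially bookkeeping once Corollary \ref{zeta sum} is in hand.
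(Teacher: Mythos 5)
Your proof is correct and follows essentially the same route as the paper: both pass through Corollary \ref{zeta sum} and compute the limit of the resulting product, extracting the factor $2$ from the $N=1$ term. The only difference is that the paper disposes of the infinite-product case merely by ``letting $n \to \infty$'' after the finite computation, whereas you explicitly justify the interchange of limit and infinite product via the logarithm and the bound $\zeta(p)-1 \to 0$; your version is the more rigorous of the two.
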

\begin{proof}
\begin{equation*} 
\lim_{p \to \infty}{\sum_{m=0}^{n}{\zeta_n(\underbrace{p, \ldots, p}_{m \,\,times})}}=\lim_{p \to \infty}{\prod_{N=1}^{n}{\left(1+\frac{1}{N^p}\right)}} 
=\lim_{p \to \infty}{\left(1+1\right)\prod_{N=2}^{n}{\left(1+\frac{1}{N^p}\right)}} 
=(2)(1)=2
.\end{equation*}
Letting $n \to \infty$, we obtain the second part of the identity. 
\end{proof}
Another identity that can be found through Corollary \ref{sum of multiple sums} is the following. 
\begin{corollary} \label{sum of the multiple sums of N}
For any $n\in \mathbb{N^*}$, we have that 
$$
\sum_{m=0}^{n}{\sum_{1\leq N_1 < \cdots < N_m \leq n}{N_m\cdots N_1}}=\prod_{N=1}^{n}{(N+1)}=(n+1)!.
$$
\end{corollary}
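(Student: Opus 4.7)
The plan is to obtain this identity as a direct specialization of Corollary \ref{sum of multiple sums}, which already gives the sum of multiple sums of an arbitrary sequence $r_N$ as the telescoping product $\prod_{N=1}^{n}(r_N+1)$. The only work needed is to pick the right sequence and simplify the resulting product.

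First I would set $r_N = N$ for $1 \leq N \leq n$ in the statement of Corollary \ref{sum of multiple sums}. The left-hand side becomes exactly the expression appearing in the statement,
$$
\sum_{m=0}^{n}\sum_{1 \leq N_1 < \cdots < N_m \leq n} N_m \cdots N_1,
$$
while the right-hand side becomes
$$
\prod_{N=1}^{n}(N+1) = 2 \cdot 3 \cdot 4 \cdots (n+1).
$$

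Second, I would recognize this latter product as a shifted factorial: multiplying numerator and denominator by $1$, we get $\prod_{N=1}^{n}(N+1) = \frac{(n+1)!}{1!} = (n+1)!$. Combining the two observations yields the claimed identity.

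There is no real obstacle here since Corollary \ref{sum of multiple sums} does all of the heavy lifting (it was itself obtained by the substitution $r_N \mapsto -r_N$ in Corollary \ref{alternating sum of multiple sums}, which in turn came from evaluating the factored polynomial at $x = 1$ via Theorem \ref{general sum of multiple sums}). The only minor thing to note is that the inner sum for $m = 0$ must be interpreted as $1$ (consistent with $P_{0,1,n} = 1$ from the earlier remark), so that the $m = 0$ term contributes the factor matching the constant term $1$ in the expansion of $\prod_{N=1}^{n}(N+1)$.
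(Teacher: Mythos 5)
Your proof is correct and matches the paper's intended derivation: the paper introduces this corollary with the phrase ``Another identity that can be found through Corollary \ref{sum of multiple sums}'' and the specialization $r_N = N$ together with $\prod_{N=1}^{n}(N+1)=(n+1)!$ is exactly that route. Your remark about interpreting the $m=0$ term as $1$ is consistent with the paper's convention $P_{0,q,n}=1$.
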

\section{Other applications} \label{Applications}
In this section, we will apply the reduction formula presented in Theorem \ref{Theorem 3.1} to simplify certain special multiple sums. The first special sum that we will simplify is the multiple sum of $N^p$. The second special sum is the multiple harmonic series as well as the multiple $p$-series for even values of $p$. 
\subsection{Multiple Power Sum}
The Faulhaber formula is a formula developed by Faulhaber in a 1631 edition of Academia Algebrae \cite{Faulhaber} to calculate sums of powers. The Faulhaber formula is as follows 
$$
\sum_{N=1}^{n}{N^p}=\frac{1}{p+1}\sum_{j=0}^{p}{(-1)^j\binom{p+1}{j}B_{j}n^{p+1-j}}
$$ 
where $B_j$ are the Bernoulli numbers of the first kind.
\begin{remark}
{\em See \cite{nielsen1923traite} for details on the history of Bernoulli numbers. }
\end{remark}

In this section, we will use the reduction formula for multiple sums to develop a more general form of the Faulhaber formula. 
\begin{theorem} \label{Theorem 5.1}
For any $m,n,p \in \mathbb{N}$ such that $n \geq m$, we have that 
\begin{equation*}
\begin{split}
\sum_{1 \leq N_1 < \cdots <N_m \leq n}{{N_m}^p\cdots {N_1}^p}
&=(-1)^m\sum_{\sum{i.y_{k,i}}=m}{\prod_{i=1}^{m}{\frac{(-1)^{y_{k,i}}}{(y_{k,i})!i^{y_{k,i}}} \left( \sum_{N=1}^{n}{N^{ip}}\right)^{y_{k,i}}}} \\
&=(-1)^m\sum_{\sum{i.y_{k,i}}=m}{\prod_{i=1}^{m}{\frac{(-1)^{y_{k,i}}}{(y_{k,i})!i^{y_{k,i}}} \left(\frac{n^{ip+1}}{ip+1}\sum_{j=0}^{ip}{(-1)^j\binom{ip+1}{j}\frac{B_{j}}{n^{j}}} \right)^{y_{k,i}}}} \\
\end{split}
\end{equation*}
where $B_j$ are the Bernoulli numbers of the first kind. 
\end{theorem}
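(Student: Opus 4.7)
The plan is a direct two-step application of results already in hand: the Reduction Theorem (Theorem \ref{Theorem 3.1}, via Corollary \ref{Corollary 3.1}) for the first equality, and the classical Faulhaber formula for the second.

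First I would specialize Corollary \ref{Corollary 3.1} to the sequence $a_{N}=N^{p}$, noting that $(a_{N})^{i}=N^{ip}$, so that for every admissible partition the factor $\left(\tfrac{1}{i}\sum_{N=1}^{n}(a_{N})^{i}\right)^{y_{k,i}}$ becomes $\left(\tfrac{1}{i}\sum_{N=1}^{n}N^{ip}\right)^{y_{k,i}}$. Pulling the $i^{-y_{k,i}}$ out of the parenthesis (so that it combines with the $(y_{k,i})!$ in the denominator) immediately yields the first equality of the theorem.

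Next I would substitute the Faulhaber formula with $p$ replaced by $ip$,
\begin{equation*}
\sum_{N=1}^{n}{N^{ip}}
=\frac{1}{ip+1}\sum_{j=0}^{ip}{(-1)^{j}\binom{ip+1}{j}B_{j}\,n^{ip+1-j}},
\end{equation*}
into each factor $\sum_{N=1}^{n}N^{ip}$ appearing in the first equality. Factoring $n^{ip+1}/(ip+1)$ out of the inner sum rewrites it as
\begin{equation*}
\frac{n^{ip+1}}{ip+1}\sum_{j=0}^{ip}{(-1)^{j}\binom{ip+1}{j}\frac{B_{j}}{n^{j}}},
\end{equation*}
which is precisely the form displayed in the theorem. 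This substitution, carried out inside the product over $i$, gives the second equality.

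There is no genuine obstacle: the argument is a one-line corollary of Theorem \ref{Theorem 3.1} followed by a one-line rewrite using Faulhaber. The only thing to watch is bookkeeping of the $i^{y_{k,i}}$ factors when moving them from the numerator of the reduction formula into the denominator alongside $(y_{k,i})!$, and the harmless rescaling $n^{ip+1-j}=n^{ip+1}\cdot n^{-j}$ when pulling $n^{ip+1}/(ip+1)$ outside the Faulhaber sum.
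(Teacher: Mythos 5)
Your proof is correct and follows exactly the same route as the paper: the paper's own proof is the one-line remark that the theorem follows from Theorem \ref{Theorem 3.1} followed by Faulhaber's formula, and your write-up simply fills in the (routine) bookkeeping of the $i^{y_{k,i}}$ factors and the rescaling $n^{ip+1-j}=n^{ip+1}n^{-j}$.
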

\begin{proof}
This theorem is obtained by applying Theorem \ref{Theorem 3.1} and then applying Faulhaber's formula. 
\end{proof}
\begin{corollary} \label{Corollary 5.1}
For any $m,n \in \mathbb{N}$, we have that 
\begin{equation*}
\begin{split}
{n+1 \brack n-m+1}
&=(-1)^m\sum_{\sum{i.y_{k,i}}=m}{\prod_{i=1}^{m}{\frac{(-1)^{y_{k,i}}}{(y_{k,i})!i^{y_{k,i}}} \left( \sum_{N=1}^{n}{N^{i}}\right)^{y_{k,i}}}} \\
&=(-1)^m\sum_{\sum{i.y_{k,i}}=m}{\prod_{i=1}^{m}{\frac{(-1)^{y_{k,i}}}{(y_{k,i})!i^{y_{k,i}}} \left(\frac{n^{i+1}}{i+1}\sum_{j=0}^{i}{(-1)^j\binom{i+1}{j}\frac{B_{j}}{n^{j}}} \right)^{y_{k,i}}}}
\end{split}
\end{equation*}
where $B_j$ are the Bernoulli numbers of the first kind.
\end{corollary}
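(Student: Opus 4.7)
The plan is to reduce the corollary to Theorem~\ref{Theorem 5.1} specialized at $p=1$, and then identify the resulting multiple sum with the unsigned Stirling number via the rising factorial. Setting $p=1$ in Theorem~\ref{Theorem 5.1} already produces exactly the two right-hand sides displayed in the corollary (the Bernoulli form coming from Faulhaber's formula applied to $\sum_{N=1}^{n} N^{i}$), so the only thing left to prove is the identification
$$
{n+1 \brack n-m+1}
=\sum_{1 \leq N_1 < \cdots < N_m \leq n}{N_m \cdots N_1}.
$$

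To establish this, I would start from the definition given in the paper, $x^{\overline{n+1}}=\sum_{k=0}^{n+1}{n+1 \brack k}x^{k}$, together with the factorisation $x^{\overline{n+1}}=x(x+1)(x+2)\cdots(x+n)$. Expanding the product $(x+1)(x+2)\cdots(x+n)$ as a polynomial in $x$, the coefficient of $x^{n-m}$ is, by definition of the elementary symmetric polynomial,
$$
e_m(1,2,\ldots,n)=\sum_{1 \leq N_1 < \cdots < N_m \leq n}{N_1 \cdots N_m}.
$$
Multiplying by the extra factor $x$ shifts every exponent up by one, so the coefficient of $x^{n-m+1}$ in $x^{\overline{n+1}}$ is the same elementary symmetric polynomial. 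Matching this coefficient against ${n+1 \brack n-m+1}$ from the Stirling expansion yields the desired identity.

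With this in hand, the corollary follows immediately: Theorem~\ref{Theorem 5.1} with $p=1$ gives
$$
\sum_{1 \leq N_1 < \cdots <N_m \leq n}{N_m \cdots N_1}
=(-1)^m\sum_{\sum{i.y_{k,i}}=m}{\prod_{i=1}^{m}{\frac{(-1)^{y_{k,i}}}{(y_{k,i})!i^{y_{k,i}}} \left(\sum_{N=1}^{n}{N^{i}}\right)^{y_{k,i}}}},
$$
and substituting the Stirling identification on the left gives the first form of the corollary; substituting Faulhaber's formula for $\sum_{N=1}^{n} N^i$ on the right then produces the Bernoulli-number form.

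The main obstacle is not computational but a careful bookkeeping of indices: one must correctly pair the exponent $n-m+1$ in the rising-factorial expansion with the lower index $n-m+1$ of the Stirling number, and observe that the ``extra'' factor of $x$ in $x^{\overline{n+1}}=x\cdot(x+1)\cdots(x+n)$ is precisely what shifts the elementary symmetric polynomial $e_m(1,\ldots,n)$ from the coefficient of $x^{n-m}$ to that of $x^{n-m+1}$. Once this index shift is handled cleanly, every remaining step is a direct citation of Theorem~\ref{Theorem 5.1} and Faulhaber's formula.
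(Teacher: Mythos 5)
Your proposal is correct and follows essentially the same route as the paper: both reduce the corollary to Theorem~\ref{Theorem 5.1} at $p=1$ combined with the identification ${n+1 \brack n-m+1}=\sum_{1 \leq N_1 < \cdots < N_m \leq n}{N_m \cdots N_1}$ coming from the expansion of the rising factorial. The only (cosmetic) difference is in the bookkeeping for that identification --- you factor $x^{\overline{n+1}}=x\cdot(x+1)\cdots(x+n)$ and read off the coefficient of $x^{n-m+1}$ directly, whereas the paper applies Vi\`ete's formula to the polynomial with roots $0,1,\ldots,n-1$, discards the vanishing terms with $N_1=0$, and then shifts $n$ to $n+1$; both yield the same identity.
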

\begin{proof}
Knowing that Stirling numbers are the coefficients of a polynomial with roots $0,1, \cdots, n-1$, then, from Vi\`ete's formula, we express a Stirling number of the first kind as follows, 
$$
\sum_{0 \leq N_1 < \cdots <N_m \leq n-1}{{N_m}\cdots {N_1}}
={n \brack n-m}
.$$
Knowing that terms containing $N_1=0$ will be zero and substituting $n$ for $n-1$, we get 
$$
\sum_{1 \leq N_1 < \cdots <N_m \leq n}{{N_m}\cdots {N_1}}
={n+1 \brack n-m+1}
.$$
By applying Theorem \ref{Theorem 5.1} for $p=1$, the theorem is proven. 
\end{proof}
Let us consider the following special cases: 
\begin{itemize}
\item For $m=2$
\begin{equation*}
\begin{split}
&\sum_{1 \leq N_1 < N_2 \leq n}{{N_2^p}{N_1^p}} \\
&\,\,=\frac{1}{2}\left(\sum_{N=1}^{n}{{N}^p}\right)^{2}
-\frac{1}{2}\left(\sum_{N=1}^{n}{{N}^{2p}}\right) \\
&\,\,=\frac{1}{2}\left[\left(\frac{n^{p+1}}{p+1}\sum_{j=0}^{p}{(-1)^j\binom{p+1}{j}\frac{B_{j}}{n^{j}}} \right)^{2}
-\left(\frac{n^{2p+1}}{2p+1}\sum_{j=0}^{2p}{(-1)^j\binom{2p+1}{j}\frac{B_{j}}{n^{j}}} \right)\right]
.\end{split}
\end{equation*}
\begin{example}
{\em For $p=1$, by using the previous equation and exploiting Faulhaber's formulas, we can get the following formula }
$$
\sum_{1 \leq N_1 < N_2 \leq n}{{N_2}{N_1}}
=\frac{n(n-1)(n+1)(3n+2)}{24}
=\left(\sum_{N=1}^{n}{N}\right)\left[\frac{(n-1)(3n+2)}{12}\right]
.$$
\end{example}
\begin{example}
{\em For $p=2$, by applying this theorem and exploiting Faulhaber's formulas, we can get the following formula }
$$
\sum_{1 \leq N_1 < N_2 \leq n}{{N_2^2}{N_1^2}}
=\frac{n(n-1)(n+1)(2n-1)(2n+1)(5n+6)}{360}
.$$
\end{example}
\item For $m=3$ \\ 
$$
\sum_{1 \leq N_1<N_2<N_3 \leq n}{{N_3}^p{N_2}^p{N_1}^p}
=\frac{1}{6}\left(\sum_{N=1}^{n}{N^p}\right)^{3}
-\frac{1}{2}\left(\sum_{N=1}^{n}{N^p}\right)\left(\sum_{N=1}^{n}{N^{2p}}\right)
+\frac{1}{3}\left(\sum_{N=1}^{n}{N^{3p}}\right)
.$$
\begin{example}
{\em For $p=1$, by applying this theorem and exploiting Faulhaber's formulas, we can get the following formula }
$$
\sum_{1 \leq N_1<N_2<N_3 \leq n}{{N_3}{N_2}{N_1}}
=\frac{(n - 2) (n - 1) n^2 (n + 1)^2}{48}
=\left(\sum_{N=1}^{n}{N}\right)^2\left[\frac{(n-1)(n-2)}{12}\right]
.$$
\end{example}
\end{itemize}
\subsection{Multiple harmonic series}
In this section, using the formula developed by Euler and the reduction theorem (Theorem \ref{Theorem 3.1}), we will prove an expression which can be used to calculate multiple harmonic series for positive even values. Then we will present new identities based on solutions for some more general forms of the Basel problem.\\  

We start by applying Theorem \ref{Theorem 3.1} to the zeta function for positive even values to get an expression for the multiple series of $\frac{1}{N^{2p}}$. 
\begin{theorem} \label{Theorem 5.2}
For any $m,p \in \mathbb{N}$, we have that 
\begin{equation*}
\begin{split}
\sum_{1 \leq N_1< \cdots <N_{m}}{\frac{1}{N_{m}^{2p} \cdots N_{1}^{2p}}}
&=(-1)^{m}\sum_{\sum{i.y_{k,i}}=m}{\prod_{i=1}^{m}{\frac{(-1)^{y_{k,i}}}{(y_{k,i})!i^{y_{k,i}}} \left(\zeta(2ip)\right)^{y_{k,i}}}} \\
&=(-1)^{(p+1)m}(2\pi)^{2pm}\sum_{\sum{i.y_{k,i}}=m}{\prod_{i=1}^{m}{\frac{1}{(y_{k,i})!} \left(\frac{B_{2ip}}{(2i)(2ip)!}\right)^{y_{k,i}}}}
.\end{split}
\end{equation*}
\end{theorem}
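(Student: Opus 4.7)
The plan is to apply the Reduction Theorem directly to the sequence $a_N = 1/N^{2p}$ and then pass to the limit $n \to \infty$, giving the first equality; the second equality will then follow by substituting Euler's classical closed form for $\zeta(2k)$.

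For the first equality, I would invoke Corollary \ref{Corollary 3.1} with $a_N = 1/N^{2p}$. The inner simple sum becomes
$$\frac{1}{i}\sum_{N=1}^{n}{(a_N)^i} = \frac{1}{i}\sum_{N=1}^{n}{\frac{1}{N^{2ip}}}.$$
Since $2ip \geq 2$ for $i,p\geq 1$, each of these simple sums converges absolutely as $n\to\infty$, and similarly the multiple series on the left converges absolutely. Hence I can take $n\to\infty$ termwise in the finite sum over partitions, replacing $\sum_{N=1}^{n}{1/N^{2ip}}$ by $\zeta(2ip)$. This yields the first claimed identity.

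For the second equality, I would recall Euler's formula
$$\zeta(2k) = (-1)^{k+1}\frac{B_{2k}(2\pi)^{2k}}{2(2k)!},$$
and substitute $k=ip$ inside each factor $(\zeta(2ip))^{y_{k,i}}$. The remaining work is bookkeeping: the total power of $2\pi$ in the generic partition term is $\prod_i (2\pi)^{2ip\,y_{k,i}} = (2\pi)^{2p\sum{i\,y_{k,i}}} = (2\pi)^{2pm}$, which factors out of the sum; the $i^{y_{k,i}}$ and $2^{y_{k,i}}$ factors combine into $(2i)^{y_{k,i}}$, giving the denominator $(2i)(2ip)!$ claimed; finally, the total sign is
$$(-1)^m \cdot \prod_{i=1}^{m}(-1)^{y_{k,i}}(-1)^{(ip+1)y_{k,i}} = (-1)^m \cdot (-1)^{p\sum{i\,y_{k,i}}} = (-1)^{(p+1)m},$$
where I used $(-1)^{y_{k,i}(ip+2)} = (-1)^{ip\,y_{k,i}}$ and $\sum{i\,y_{k,i}}=m$.

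The only delicate step is the sign bookkeeping in the second equality, so I would carry that out explicitly. No deeper obstacle is expected: both equalities follow essentially mechanically once the Reduction Theorem and Euler's formula are in hand.
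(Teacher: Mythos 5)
Your proposal is correct and follows essentially the same route as the paper: apply the Reduction Theorem with $a_N = 1/N^{2p}$ to obtain the $\zeta(2ip)$ form, then substitute Euler's formula $\zeta(2k)=(-1)^{k+1}\frac{B_{2k}(2\pi)^{2k}}{2(2k)!}$ and track the signs and powers of $2\pi$. Your sign bookkeeping checks out, and your explicit justification of the passage to the limit $n\to\infty$ is a small point of extra care that the paper leaves implicit.
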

\begin{proof}
From Theorem \ref{Theorem 3.1}, 
\begin{equation*}
\begin{split}
\sum_{1 \leq N_1< \cdots <N_{m}}{\frac{1}{N_{m}^{2p} \cdots N_{1}^{2p}}}
&=(-1)^{m}\sum_{\sum{i.y_{k,i}}=m}{\prod_{i=1}^{m}{\frac{(-1)^{y_{k,i}}}{(y_{k,i})!i^{y_{k,i}}} \left(\sum_{N=1}^{\infty}{\left(\frac{1}{N^{2p}}\right)^{i}}\right)^{y_{k,i}}}} \\
&=(-1)^{m}\sum_{\sum{i.y_{k,i}}=m}{\prod_{i=1}^{m}{\frac{(-1)^{y_{k,i}}}{(y_{k,i})!i^{y_{k,i}}} \left(\zeta(2ip)\right)^{y_{k,i}}}} 
.\end{split}
\end{equation*}
Euler proved that, for $m \geq 1$ (see \cite{arfken}),  
$$
\zeta(2m)=\frac{(-1)^{m+1}(2\pi)^{2m}}{2(2m)!}B_{2m}
.$$
Hence, 
\begin{equation*}
\begin{split}
\sum_{1 \leq N_1< \cdots <N_{m}}{\frac{1}{N_{m}^{2p} \cdots N_{1}^{2p}}}
&=(-1)^m\sum_{\sum{i.y_{k,i}}=m}{\prod_{i=1}^{m}{\frac{(-1)^{y_{k,i}}}{(y_{k,i})!i^{y_{k,i}}} \left((-1)^{ip+1}\frac{B_{2ip}(2\pi)^{2ip}}{2(2ip)!}\right)^{y_{k,i}}}} \\
&=(-1)^{(p+1)m}(2\pi)^{2pm}\sum_{\sum{i.y_{k,i}}=m}{\prod_{i=1}^{m}{\frac{1}{(y_{k,i})!} \left(\frac{B_{2ip}}{(2i)(2ip)!}\right)^{y_{k,i}}}}
.\end{split}
\end{equation*}
\end{proof}
\begin{corollary} \label{Corollary 5.2}
For $m=2$, Theorem \ref{Theorem 5.2} becomes 
$$
\sum_{1 \leq N_1 < N_2}{\frac{1}{N_1^{2p} N_2^{2p}}}
= \frac{1}{2}\left[(\zeta(2p))^2-\zeta(4p)\right]=\frac{(2\pi)^{4p}}{4(4p)!}\left[\binom{4p}{2p}\frac{(B_{2p})^2}{2}+B_{4p}\right]
.$$
\end{corollary}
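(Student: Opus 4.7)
The plan is to specialize Theorem \ref{Theorem 5.2} to the case $m=2$ and then invoke Euler's closed-form expression $\zeta(2k) = \frac{(-1)^{k+1}(2\pi)^{2k}}{2(2k)!}B_{2k}$ to convert the answer into Bernoulli numbers.

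First, I would enumerate the partitions of $2$. There are exactly two: the partition $(y_{k,1},y_{k,2}) = (2,0)$ corresponding to $2 = 1+1$, and the partition $(y_{k,1},y_{k,2}) = (0,1)$ corresponding to $2 = 2$. Next, for each partition I would evaluate the product $\prod_{i=1}^{2}\frac{(-1)^{y_{k,i}}}{(y_{k,i})!\,i^{y_{k,i}}}(\zeta(2ip))^{y_{k,i}}$. The first partition contributes $\frac{1}{2!\cdot 1}(\zeta(2p))^{2} = \tfrac{1}{2}(\zeta(2p))^{2}$, and the second contributes $\frac{-1}{1!\cdot 2}\zeta(4p) = -\tfrac{1}{2}\zeta(4p)$. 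Summing and multiplying by the overall factor $(-1)^{m}=(-1)^{2}=1$ yields the first equality $\tfrac{1}{2}\bigl[(\zeta(2p))^{2}-\zeta(4p)\bigr]$.

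For the second equality, I would substitute Euler's formula. Since the sign $(-1)^{p+1}$ is squared in $(\zeta(2p))^{2}$, one obtains $(\zeta(2p))^{2} = \frac{(2\pi)^{4p}}{4((2p)!)^{2}}(B_{2p})^{2}$, while $\zeta(4p) = \frac{(-1)^{2p+1}(2\pi)^{4p}}{2(4p)!}B_{4p} = -\frac{(2\pi)^{4p}}{2(4p)!}B_{4p}$ (the exponent $2p+1$ is always odd, independently of the parity of $p$). Plugging these into $\tfrac{1}{2}\bigl[(\zeta(2p))^{2}-\zeta(4p)\bigr]$ and factoring out $\frac{(2\pi)^{4p}}{4(4p)!}$, the bracket becomes $\frac{(4p)!}{2((2p)!)^{2}}(B_{2p})^{2}+B_{4p}$, which is exactly $\binom{4p}{2p}\frac{(B_{2p})^{2}}{2}+B_{4p}$ upon recognizing the central binomial coefficient.

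No step is genuinely hard: the only place where care is required is the sign bookkeeping at the point where Euler's formula is applied to $\zeta(4p)$, because the combination of the minus sign from the partition $(0,1)$ with the $(-1)^{2p+1}=-1$ factor from Euler's formula is what produces the $+B_{4p}$ (rather than $-B_{4p}$) inside the final bracket. Once that is handled, the rearrangement into the $\binom{4p}{2p}$ form is a one-line manipulation.
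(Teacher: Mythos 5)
Your proposal is correct and follows essentially the same route as the paper: specialize Theorem \ref{Theorem 5.2} to $m=2$ (the two partitions $2=1+1$ and $2=2$ give $\tfrac12(\zeta(2p))^2$ and $-\tfrac12\zeta(4p)$), then substitute Euler's formula and factor out $\frac{(2\pi)^{4p}}{4(4p)!}$. Your explicit tracking of the $(-1)^{2p+1}=-1$ sign in $\zeta(4p)$, which is what turns $-\zeta(4p)$ into $+B_{4p}$ in the final bracket, is in fact handled more carefully than in the paper's displayed intermediate steps, which contain two compensating sign slips before reaching the same (correct) final expression.
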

\begin{proof}
By applying Theorem \ref{Theorem 5.2} with $m=2$, we get 
\begin{equation*}
\begin{split}
\sum_{1 \leq N_1 < N_2}{\frac{1}{N_1^{2p} N_2^{2p}}}
&= \frac{1}{2}\left[(\zeta(2p))^2-\zeta(4p)\right]
=\frac{1}{2}\left[\frac{(2\pi)^{4p}}{4[(2p)!]^2}(B_{2p})^2-\frac{(2\pi)^{4p}}{2(4p)!}B_{4p}\right]\\
&=\frac{(2\pi)^{4p}}{4}\left[\frac{(B_{2p})^2}{2(2p)!(2p)!}-\frac{B_{4p}}{(4p)!}\right]
=\frac{(2\pi)^{4p}}{4(4p)!}\left[\binom{4p}{2p}\frac{(B_{2p})^2}{2}+B_{4p}\right]
.\end{split}
\end{equation*}
\end{proof}
The following table summarizes some values of the zeta function for positive even arguments: 
$$
\zeta(2)=\frac{\pi^2}{6} \,\,\,\,\,\,
\zeta(4)=\frac{\pi^4}{90} \,\,\,\,\,\,
\zeta(6)=\frac{\pi^6}{945} \,\,\,\,\,\,
\zeta(8)=\frac{\pi^8}{9450} \,\,\,\,\,\,
\zeta(10)=\frac{\pi^{10}}{93555} \,\,\,\,\,\,
$$
$$
\zeta(12)=\frac{691\pi^{12}}{638512875} \,\,\,\,\,\,
\zeta(14)=\frac{2\pi^{14}}{18243225} \,\,\,\,\,\,
\zeta(16)=\frac{3617\pi^{16}}{325641566250}. \,\,\,\,\,\,
$$
By using the values in the above table as well as Theorem \ref{Theorem 5.2} and playing with different values, we can notice some identities.  
\begin{theorem} \label{Theorem 5.3}
For any $m \in \mathbb{N}$, we have that 
$$
\sum_{1 \leq N_1 < \cdots < N_m}{\frac{1}{N_m^2 \cdots N_1^2}}
=\sum_{\sum{i.y_{k,i}}=m}{\prod_{i=1}^{m}{\frac{(-1)^{y_{k,i}}}{(y_{k,i})!i^{y_{k,i}}} \left(\zeta(2i)\right)^{y_{k,i}}}}
=\frac{\pi^{2m}}{(2m+1)!}
$$
and 
$$
\sum_{\sum{i.y_{k,i}}=m}{\prod_{i=1}^{m}{\frac{1}{(y_{k,i})!} \left(\frac{B_{2i}}{(2i)(2i)!}\right)^{y_{k,i}}}}
=\frac{1}{2^{2m}(2m+1)!}
.$$
\end{theorem}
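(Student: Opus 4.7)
The plan is to exploit Theorem \ref{Theorem 5.2} with $p=1$, which already establishes the first equality between the multiple harmonic sum and the partition sum involving $\zeta(2i)$. The remaining tasks are (i) to evaluate this quantity in closed form as $\pi^{2m}/(2m+1)!$, and (ii) to translate that closed form into the Bernoulli-number identity stated at the bottom of the theorem.

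For (i), the natural tool is Euler's infinite product factorization
$$\frac{\sin(\pi x)}{\pi x} = \prod_{n=1}^{\infty}\left(1 - \frac{x^2}{n^2}\right).$$
Expanding the product as a power series in $x^2$ through the elementary-symmetric identity $\prod_{n \geq 1}(1 + a_n t) = \sum_m t^m \sum_{N_1 < \cdots < N_m} a_{N_1} \cdots a_{N_m}$ with $a_n = 1/n^2$ and $t = -x^2$, the coefficient of $x^{2m}$ becomes $(-1)^m \sum_{1 \leq N_1 < \cdots < N_m} 1/(N_1^2 \cdots N_m^2)$. Comparing with the classical Taylor expansion $\sin(\pi x)/(\pi x) = \sum_{m \geq 0} (-1)^m \pi^{2m} x^{2m}/(2m+1)!$ and matching coefficients of $x^{2m}$ then delivers the closed form $\pi^{2m}/(2m+1)!$.

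For (ii), substitute Euler's closed form $\zeta(2i) = (-1)^{i+1}(2\pi)^{2i}B_{2i}/(2(2i)!)$ into the partition-sum expression. The power $\zeta(2i)^{y_{k,i}}$ contributes a sign $(-1)^{(i+1)y_{k,i}}$, a factor $(2\pi)^{2iy_{k,i}}$, a numerator $B_{2i}^{y_{k,i}}$, and a denominator $2^{y_{k,i}}(2i)!^{y_{k,i}}$. Combined with the preexisting $(-1)^{y_{k,i}}$ factor, the sign on each partition becomes $(-1)^{(i+2)y_{k,i}} = (-1)^{iy_{k,i}}$, whose product over $i$ collapses to $(-1)^{\sum i y_{k,i}} = (-1)^m$, independent of the partition. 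The $(2\pi)$-factors assemble into an overall $(2\pi)^{2m}$, while $\prod_i 2^{y_{k,i}} i^{y_{k,i}} = \prod_i (2i)^{y_{k,i}}$ repackages the denominator as $(2i)^{y_{k,i}}(2i)!^{y_{k,i}}$. Equating to the $\pi^{2m}/(2m+1)!$ from (i), the $(-1)^m$ and one copy of $\pi^{2m}$ cancel, leaving the stated Bernoulli identity after dividing through by $2^{2m}$.

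The main obstacle is the careful sign and exponent bookkeeping in step (ii): one must invoke the constraint $\sum_i i y_{k,i} = m$ at precisely the right moments so that the signs, the powers of $2$, and the powers of $\pi$ collapse uniformly across every partition, rather than producing partition-dependent prefactors that would block the clean factorization. Beyond this, no deeper analytic input is required than Euler's two classical formulas (the sine product and the closed form for $\zeta(2m)$).
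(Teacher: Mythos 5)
Your proposal is correct in substance but takes a genuinely different route at the decisive step. The paper does not prove the evaluation $\zeta(\underbrace{2,\ldots,2}_{m})=\pi^{2m}/(2m+1)!$ at all: it cites it from the literature (Hoffman, Ohno, Schneider) and then merely applies Theorem \ref{Theorem 3.1} to convert it into the $\zeta(2i)$ partition identity, with Theorem \ref{Theorem 5.2} at $p=1$ supplying the Bernoulli version. You instead derive the closed form from first principles via Euler's product $\sin(\pi x)/(\pi x)=\prod_{n\geq 1}\left(1-x^2/n^2\right)$, reading off the coefficient of $x^{2m}$ as $(-1)^m$ times the multiple sum (the elementary symmetric function expansion of the product) and matching it against the Taylor coefficient $(-1)^m\pi^{2m}/(2m+1)!$. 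This makes the theorem self-contained at the cost of one analytic input --- the term-by-term expansion of the infinite product, which is legitimate here since $\sum 1/n^2$ converges absolutely --- and is in fact essentially the classical proof of the result the paper cites. Your step (ii) reproduces the computation already carried out inside the paper's proof of Theorem \ref{Theorem 5.2}, so the two arguments coincide there. One caveat on signs: as printed, the middle expression in the theorem's first display omits the prefactor $(-1)^m$ that Theorem \ref{Theorem 5.2} requires (check $m=1$: the partition sum evaluates to $-\zeta(2)$, not $\zeta(2)$); your bookkeeping in (ii), where ``the $(-1)^m$ cancels,'' implicitly assumes that prefactor is present, so you are proving the corrected statement --- which is the right thing to do, but worth flagging explicitly.
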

\begin{proof}
In \cite{hoffman1992multiple}, \cite{ohno2001multiple}, and \cite{Schneider}, the following relation was proven, 
$$
\sum_{1 \leq N_1 < \cdots < N_m}{\frac{1}{N_m^2 \cdots N_1^2}}
=\frac{\pi^{2m}}{(2m+1)!}
.$$
By applying Theorem \ref{Theorem 3.1}, we get 
$$
\sum_{\sum{i.y_{k,i}}=m}{\prod_{i=1}^{m}{\frac{(-1)^{y_{k,i}}}{(y_{k,i})!i^{y_{k,i}}} \left(\zeta(2i)\right)^{y_{k,i}}}}
=\frac{\pi^{2m}}{(2m+1)!}
.$$
Applying Theorem \ref{Theorem 5.2} with $p=1$, we obtain the second equation. 
\end{proof}
\begin{example}
{\em For $m=4$, we have }
\begin{dmath*}
\sum_{1 \leq N_1<N_2<N_3<N_4}{\frac{1}{N_4^2 N_3^2 N_2^2 N_1^2}}
=\frac{1}{24}\left(\sum_{N=1}^{\infty}{\frac{1}{N^2}}\right)^{4}
-\frac{1}{4}\left(\sum_{N=1}^{\infty}{\frac{1}{N^2}}\right)^{2}\left(\sum_{N=1}^{\infty}{\frac{1}{N^4}}\right)
+\frac{1}{3}\left(\sum_{N=1}^{\infty}{\frac{1}{N^2}}\right)\left(\sum_{N=1}^{\infty}{\frac{1}{N^6}}\right)
+\frac{1}{8}\left(\sum_{N=1}^{\infty}{\frac{1}{N^4}}\right)^{2}
-\frac{1}{4}\left(\sum_{N=1}^{\infty}{\frac{1}{N^8}}\right)
={\frac{\pi^8}{9!} 
(\approx 0.02614784782)}
.\end{dmath*}
\end{example}
Using Theorem \ref{Theorem 5.3}, we will prove that the multiple sum of $\frac{1}{N^p}$ will converge to 0 as the number of summations $m$ goes to infinity for any integer $p \geq 2$. 
\begin{theorem} \label{Theorem 5.4}
Let $p \in \mathbb{N}$ such that $p \geq 2$, for any $m \in \mathbb{N}$, we have that  
$$ 
\lim_{m \to \infty}{{\left(\sum_{1 \leq N_1 < \cdots < N_m}{\frac{1}{N_m^{p} \cdots N_1^{p}}}\right)}}
=0
.$$
\end{theorem}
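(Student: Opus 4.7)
The plan is to reduce the general $p\geq 2$ case to the specific case $p=2$, which is already handled by Theorem \ref{Theorem 5.3}. The key observation is monotonicity in $p$: since $N \geq 1$ in the summation range, we have $1/N^p \leq 1/N^2$ for every integer $p \geq 2$. Applying this termwise inside the multiple sum,
\begin{equation*}
0 \leq \sum_{1 \leq N_1 < \cdots < N_m}{\frac{1}{N_m^p \cdots N_1^p}} \leq \sum_{1 \leq N_1 < \cdots < N_m}{\frac{1}{N_m^2 \cdots N_1^2}}.
\end{equation*}
so a squeeze argument reduces the problem to showing that the upper bound tends to $0$ as $m \to \infty$.

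For the upper bound, I would invoke Theorem \ref{Theorem 5.3} directly, which gives the closed form
\begin{equation*}
\sum_{1 \leq N_1 < \cdots < N_m}{\frac{1}{N_m^2 \cdots N_1^2}} = \frac{\pi^{2m}}{(2m+1)!}.
\end{equation*}
The limit $\lim_{m \to \infty}\pi^{2m}/(2m+1)! = 0$ is standard (e.g.\ by noting that for $m$ large enough that $2m+1 > \pi^2$, the ratio of consecutive terms is at most some constant less than $1$, so the sequence decays geometrically after a finite index). The squeeze theorem then yields the claim.

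I expect no real obstacle here; the statement is essentially a direct corollary of Theorem \ref{Theorem 5.3} combined with elementary comparison. The only subtlety worth mentioning in the write-up is justifying that the upper bound is finite and applicable for all $p$ in a uniform way, but since the bounding sum is independent of $p$ and is known to equal $\pi^{2m}/(2m+1)!$, this is automatic. The proof can therefore be quite short: state the comparison, invoke Theorem \ref{Theorem 5.3}, and apply the squeeze theorem.
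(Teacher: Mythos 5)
Your proposal is correct and follows essentially the same route as the paper: termwise comparison $1/N_i^{p}\leq 1/N_i^{2}$ for $p\geq 2$, the resulting squeeze between $0$ and $\sum_{1 \leq N_1 < \cdots < N_m}{1/(N_m^{2}\cdots N_1^{2})}=\pi^{2m}/(2m+1)!$ from Theorem \ref{Theorem 5.3}, and the elementary limit $\pi^{2m}/(2m+1)!\to 0$. No gaps; this matches the paper's argument.
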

\begin{proof}
Knowing that for any integer $p \geq 2, 0 \leq \frac{1}{N_i^{p}} \leq \frac{1}{N_i^{2}}$, therefore, $0 \leq \frac{1}{N_m^{p} \cdots N_1^{p}} \leq \frac{1}{N_m^{2} \cdots N_1^{2}}$, which then implies that 
$$
0 \leq 
\sum_{1 \leq N_1 < \cdots < N_m}{\frac{1}{N_m^{p} \cdots N_1^{p}}} 
\leq 
\sum_{1 \leq N_1 < \cdots < N_m}{\frac{1}{N_m^{2} \cdots N_1^{2}}}
.$$
By taking the limit as $m$ goes to infinity and applying Theorem \ref{Theorem 5.3}, we get 
$$
0 \leq 
\lim_{m \to \infty}{{\left(\sum_{1 \leq N_1 < \cdots < N_m}{\frac{1}{N_m^{p} \cdots N_1^{p}}}\right)}}
\leq 
\lim_{m \to \infty}{{\left(\frac{\pi^{2m}}{(2m+1)!}\right)}}=0
.$$
Hence, the theorem is proven. 
\end{proof}
\begin{theorem} \label{Conjecture 5.1}
For any $m \in \mathbb{N}$, we have that 
$$
\sum_{1 \leq N_1 < \cdots < N_m}{\frac{1}{N_m^4 \cdots N_1^4}}
=\sum_{\sum{i.y_{k,i}}=m}{\prod_{i=1}^{m}{\frac{(-1)^{y_{k,i}}}{(y_{k,i})!i^{y_{k,i}}} \left(\zeta(4i)\right)^{y_{k,i}}}}
=\frac{2(2^{2m})\pi^{4m}}{(4m+2)!}
=\frac{2(\sqrt{2} \pi)^{4m}}{(4m+2)!}
$$
and 
$$
\sum_{\sum{i.y_{k,i}}=m}{\prod_{i=1}^{m}{\frac{1}{(y_{k,i})!} \left(\frac{B_{4i}}{(2i)(4i)!}\right)^{y_{k,i}}}}
=\frac{2(-1)^m}{2^{2m}(4m+2)!}
.$$
\end{theorem}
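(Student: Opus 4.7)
The plan is to establish the closed-form generating identity
\begin{equation*}
\sum_{m=0}^{\infty}(-1)^{m}\,x^{4m}\,\zeta(\underbrace{4,\ldots,4}_{m\,\text{times}}) \;=\; \frac{\sin(\pi x)\sinh(\pi x)}{\pi^{2}x^{2}},
\end{equation*}
extract its Taylor coefficients in closed form, and then deduce the partition/Bernoulli identity by comparison with Theorem \ref{Theorem 5.2}. I expect the generating function step to be short, and the main work to be a careful expansion of the trigonometric/hyperbolic product.

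First I would obtain the product side. Starting from the Weierstrass factorisations $\sin(\pi x)=\pi x\prod_{n\ge 1}(1-x^{2}/n^{2})$ and $\sinh(\pi x)=\pi x\prod_{n\ge 1}(1+x^{2}/n^{2})$, multiplication gives $\sin(\pi x)\sinh(\pi x)/(\pi^{2}x^{2})=\prod_{n\ge 1}(1-x^{4}/n^{4})$. Applying Corollary \ref{alternating sum of multiple sums} to the finite truncation with $r_{N}=x^{4}/N^{4}$ (or equivalently Corollary \ref{sum of multiple sums} after a sign change) yields
\begin{equation*}
\prod_{N=1}^{n}\!\left(1-\frac{x^{4}}{N^{4}}\right)=\sum_{m=0}^{n}(-1)^{m}x^{4m}\!\!\sum_{1\le N_{1}<\cdots<N_{m}\le n}\!\!\frac{1}{N_{1}^{4}\cdots N_{m}^{4}}.
\end{equation*}
Absolute convergence of $\sum 1/N^{4}$ (for $|x|<1$) lets me pass to $n\to\infty$ term by term, producing the generating identity.

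Next I would expand the right-hand side using the identity
\begin{equation*}
\sin(\pi x)\sinh(\pi x)=\tfrac{1}{2i}\bigl[\cos((1-i)\pi x)-\cos((1+i)\pi x)\bigr].
\end{equation*}
Since $(1\pm i)^{2}=\pm 2i$, one has $(1\pm i)^{2n}=2^{n}(\pm i)^{n}$. Substituting into the cosine series, the even-$n$ terms cancel, and for $n=2k+1$ one computes $(-i)^{2k+1}-i^{2k+1}=-2i(-1)^{k}$. After dividing by $2i\pi^{2}x^{2}$ and tidying the constants, this collapses to
\begin{equation*}
\frac{\sin(\pi x)\sinh(\pi x)}{\pi^{2}x^{2}}=\sum_{k=0}^{\infty}\frac{(-1)^{k}\,2^{2k+1}\pi^{4k}}{(4k+2)!}\,x^{4k}.
\end{equation*}
Matching coefficients of $x^{4m}$ with the series from the first step gives
\begin{equation*}
\zeta(\underbrace{4,\ldots,4}_{m\,\text{times}})=\frac{2\cdot 2^{2m}\pi^{4m}}{(4m+2)!}=\frac{2(\sqrt{2}\pi)^{4m}}{(4m+2)!}.
\end{equation*}
The first equality of the theorem is then immediate from Theorem \ref{Theorem 3.1} (the reduction theorem) applied with $a_{N}=1/N^{2}$, since the reduction rewrites the multiple zeta value exactly as the sum over partitions in $\zeta(2i\cdot 2)=\zeta(4i)$. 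The Bernoulli form is obtained by equating this closed form with the expression furnished by Theorem \ref{Theorem 5.2} at $p=2$ (where $(-1)^{(p+1)m}=(-1)^{m}$ and $(2\pi)^{2pm}=2^{4m}\pi^{4m}$) and solving for the sum over partitions, which gives $2(-1)^{m}/(2^{2m}(4m+2)!)$.

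The main obstacle is the middle step: one must keep track of signs and powers of $i$ carefully to see that only one parity class of indices survives and that the surviving terms assemble into a series supported on powers of $x^{4}$. Once that identity is in hand, the rest is either standard (Weierstrass products, interchange of limits justified by absolute convergence) or direct substitution into results already proved earlier in the paper.
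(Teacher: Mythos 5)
Your proof is correct, but it takes a genuinely different and more self-contained route than the paper. The paper's proof is essentially a two-line citation: it imports the evaluation $\zeta(\{4\}^m)=\frac{2(2^{2m})\pi^{4m}}{(4m+2)!}$ wholesale from \cite{borwein1996evaluations}, then applies Theorem \ref{Theorem 3.1} to get the partition form and Theorem \ref{Theorem 5.2} with $p=2$ to get the Bernoulli form. You instead reprove the imported input from scratch: the Weierstrass products give $\prod_{n\ge1}(1-x^4/n^4)=\sin(\pi x)\sinh(\pi x)/(\pi^2x^2)$, the paper's own Corollary \ref{alternating sum of multiple sums} (with $r_N=x^4/N^4$) identifies the product with the generating series $\sum_m(-1)^mx^{4m}\zeta_n(\{4\}^m)$, and the $\cos((1\mp i)\pi x)$ expansion extracts the coefficients; your bookkeeping of $(1\pm i)^{2n}=2^n(\pm i)^n$ and the survival of only the $n=2k+1$ terms is correct, as is the final normalization $2\cdot2^{2m}\pi^{4m}/(4m+2)!$. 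What your approach buys is independence from the external reference (and a nice demonstration that the paper's Section 4 corollaries generate these evaluations); what the paper's approach buys is brevity. Two small points: where you say the reduction theorem is applied with $a_N=1/N^2$, you mean $a_N=1/N^4$ (so that $\sum_N(a_N)^i=\zeta(4i)$, consistent with your parenthetical $\zeta(2i\cdot2)$); and note that both your derivation and the paper's own Theorem \ref{Theorem 5.2} actually produce a prefactor $(-1)^m$ in front of the partition sum $\sum\prod\frac{(-1)^{y_{k,i}}}{(y_{k,i})!\,i^{y_{k,i}}}(\zeta(4i))^{y_{k,i}}$, which the displayed statement of the theorem omits -- already at $m=1$ the stated middle expression equals $-\zeta(4)<0$, so the omission is a typo in the statement rather than a flaw in either proof.
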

\begin{proof}
From \cite{borwein1996evaluations}, we have that 
$$
\sum_{1 \leq N_1 < \cdots < N_m}{\frac{1}{N_m^4 \cdots N_1^4}}
=\frac{2(2^{2m})\pi^{4m}}{(4m+2)!}
=\frac{2(\sqrt{2} \pi)^{4m}}{(4m+2)!}
.$$
Hence, applying Theorem \ref{Theorem 3.1}, we obtain the first equation of this theorem. \\
The second equation is obtained by applying Theorem \ref{Theorem 5.2} with $p=2$. 
\end{proof}
\begin{example}
{\em For $m=3$, we have }
\begin{dmath*}
\sum_{1 \leq N_1<N_2<N_3}{\frac{1}{N_3^4 N_2^4 N_1^4}}
=\frac{1}{6}\left(\sum_{N=1}^{n}{\frac{1}{N^4}}\right)^{3}
-\frac{1}{2}\left(\sum_{N=1}^{n}{\frac{1}{N^4}}\right)\left(\sum_{N=1}^{n}{\frac{1}{N^8}}\right)
+\frac{1}{3}\left(\sum_{N=1}^{n}{\frac{1}{N^{12}}}\right)
={\frac{\pi^{12}}{681080400} 
=\frac{2(2^{2(3)})\pi^{4(3)}}{(4(3)+2)!} 
(\approx 0.001357063251)}
.\end{dmath*}
\end{example}
\begin{theorem} \label{Conjecture 5.2}
For any $m \in \mathbb{N}$, we have that 
$$
\sum_{1 \leq N_1 < \cdots < N_m}{\frac{1}{N_m^6 \cdots N_1^6}}
=\sum_{\sum{i.y_{k,i}}=m}{\prod_{i=1}^{m}{\frac{(-1)^{y_{k,i}}}{(y_{k,i})!i^{y_{k,i}}} \left(\zeta(6i)\right)^{y_{k,i}}}}
=\frac{6(2\pi)^{6m}}{(6m+3)!}
$$
and 
$$
\sum_{\sum{i.y_{k,i}}=m}{\prod_{i=1}^{m}{\frac{1}{(y_{k,i})!} \left(\frac{B_{6i}}{(2i)(6i)!}\right)^{y_{k,i}}}}
=\frac{6}{(6m+3)!}
.$$
\end{theorem}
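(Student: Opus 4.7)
The plan is to follow the template established by the proofs of Theorems~\ref{Theorem 5.3} and~\ref{Conjecture 5.1}: the statement reduces to (i) importing a known closed-form evaluation of the depth-$m$ multiple zeta value $\zeta(\{6\}^m)$, together with (ii) a direct application of the reduction and Bernoulli-rewrite machinery already developed in Sections~\ref{Reduction Formulas} and~\ref{Applications}. The decisive input is the identity
$$\sum_{1 \leq N_1 < \cdots < N_m}\frac{1}{N_m^6 \cdots N_1^6} = \frac{6\,(2\pi)^{6m}}{(6m+3)!},$$
which is the $p=3$ instance of the Borwein--Bradley closed-form evaluations for $\zeta(\{2p\}^m)$. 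One way to derive it is to extract the coefficient of $x^{6m}$ from
$$\prod_{n=1}^{\infty}\left(1-\frac{x^6}{n^6}\right)=\prod_{n=1}^{\infty}\left(1-\frac{x^2}{n^2}\right)\left(1-\frac{\omega x^2}{n^2}\right)\left(1-\frac{\bar{\omega} x^2}{n^2}\right)$$
with $\omega=e^{2\pi i/3}$, then to evaluate each of the three factors via the sine product formula and collect powers of $x$.

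With the closed form in hand, the first equality is immediate. Apply Theorem~\ref{Theorem 3.1} (or equivalently Corollary~\ref{Corollary 3.1}) to the sequence $a_N = 1/N^6$ while letting the upper bound tend to infinity; each simple sum collapses to $\sum_{N=1}^{\infty}(a_N)^i = \zeta(6i)$. This expresses the multiple zeta value on the left as the alternating partition sum in the middle, which in turn equals the closed form on the right.

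For the second equality, invoke Theorem~\ref{Theorem 5.2} with $p=3$. Since $(-1)^{(p+1)m}=(-1)^{4m}=1$ and $2pm=6m$, $2ip=6i$, that theorem rewrites the same multiple sum as
$$(2\pi)^{6m}\sum_{\sum i\,y_{k,i}=m}\prod_{i=1}^{m}\frac{1}{y_{k,i}!}\left(\frac{B_{6i}}{(2i)(6i)!}\right)^{y_{k,i}}.$$
Equating this with $6\,(2\pi)^{6m}/(6m+3)!$ and cancelling the common factor $(2\pi)^{6m}$ produces the desired Bernoulli identity. The only non-mechanical step in the whole argument is establishing (or citing) the closed form $\zeta(\{6\}^m) = 6(2\pi)^{6m}/(6m+3)!$; everything after that is routine substitution into theorems already proved in this paper.
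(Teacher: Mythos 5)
Your proposal is correct and follows essentially the same route as the paper: the paper likewise imports the closed form $\zeta(\{6\}^m)=6(2\pi)^{6m}/(6m+3)!$ from Borwein--Bradley, applies Theorem~\ref{Theorem 3.1} for the first equality, and applies Theorem~\ref{Theorem 5.2} with $p=3$ for the Bernoulli identity. The only addition is your sketch of how to derive the cited closed form from the sine-product factorization, which the paper does not attempt (it simply cites the reference).
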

\begin{proof}
From \cite{borwein1996evaluations}, we have that 
$$
\sum_{1 \leq N_1 < \cdots < N_m}{\frac{1}{N_m^6 \cdots N_1^6}}
=\frac{6(2\pi)^{6m}}{(6m+3)!}
.$$
Hence, applying Theorem \ref{Theorem 3.1}, we obtain the first equation of this theorem. \\
The second equation is obtained by applying Theorem \ref{Theorem 5.2} with $p=3$. 
\end{proof}
\begin{example}
{\em For $m=3$, we have }
\begin{dmath*}
\sum_{1 \leq N_1<N_2<N_3}{\frac{1}{N_3^6 N_2^6 N_1^6}}
=\frac{1}{6}\left(\sum_{N=1}^{n}{\frac{1}{N^6}}\right)^{3}
-\frac{1}{2}\left(\sum_{N=1}^{n}{\frac{1}{N^6}}\right)\left(\sum_{N=1}^{n}{\frac{1}{N^{12}}}\right)
+\frac{1}{3}\left(\sum_{N=1}^{n}{\frac{1}{N^{18}}}\right)
={\frac{2\pi^{18}}{64965492466875} 
=\frac{6(2\pi)^{6(3)}}{(6(3)+3)!} 
(\approx 0.00002735551966)}
.\end{dmath*}
\end{example}
\section{Relation to Recurrent Sums and Odd-Even Partition Identities} \label{Relation to Recurrent Sums and Odd-Even Partitions}
Recurrent sums and multiple sums have been studied separately respectively in \cite{RecurrentSums} and in this paper. In this section. we will compare these types of sums and show their similarities and the link between them. Then by combining the individual relations of each of these sums, we will produce new results. In particular, we will obtain new relations governing odd partitions and even partitions of a non-negative integer.  
\subsection{Relations between recurrent sums and multiple sums}
In this section, we develop the relation linking recurrent and multiple sums. Recurrent sums and multiple sums can be related by the following theorem.  
\begin{theorem} \label{Theorem 6.1}
For any $m,q,n \in \mathbb{N}$ and for any sequence $a_N$ defined in the interval $[q,n]$, we have that 
\begin{dmath*}
\sum_{N_m=q}^{n}{\cdots \sum_{N_1=q}^{N_2}{a_{N_m}\cdots a_{N_1}}}
+(-1)^m\sum_{q \leq N_1 < \cdots <N_m \leq n}{a_{N_m}\cdots a_{N_1}}
=2\sum_{\substack{\sum{i.y_{k,i}}=m \\ \sum{y_{k,i}} \,\, is \,\, even}}{\prod_{i=1}^{m}{\frac{1}{(y_{k,i})!} \left( \frac{1}{i}\sum_{N=q}^{n}{(a_N)^i }\right)^{y_{k,i}}}}
.\end{dmath*}
\begin{dmath*}
\sum_{N_m=q}^{n}{\cdots \sum_{N_1=q}^{N_2}{a_{N_m}\cdots a_{N_1}}}
-(-1)^m\sum_{q \leq N_1 < \cdots <N_m \leq n}{a_{N_m}\cdots a_{N_1}}
=2\sum_{\substack{\sum{i.y_{k,i}}=m \\ \sum{y_{k,i}} \,\, is \,\, odd}}{\prod_{i=1}^{m}{\frac{1}{(y_{k,i})!} \left( \frac{1}{i}\sum_{N=q}^{n}{(a_N)^i }\right)^{y_{k,i}}}}
.\end{dmath*}
\end{theorem}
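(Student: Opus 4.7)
The plan is to combine Theorem~\ref{Theorem 3.1} (the reduction theorem for multiple sums) with the analogous reduction theorem for recurrent sums established in \cite{RecurrentSums}, and then read off the odd/even partition decompositions by the standard parity trick $1\pm(-1)^{\sum y_{k,i}}$.

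First, I will rewrite Theorem~\ref{Theorem 3.1} in the form given in its accompanying remark, namely
$$
\sum_{q \leq N_1 < \cdots <N_m \leq n}{a_{N_m}\cdots a_{N_1}}
=\sum_{\sum{i.y_{k,i}}=m}{(-1)^{m-\sum{y_{k,i}}}\prod_{i=1}^{m}{\frac{1}{(y_{k,i})!} \left( \frac{1}{i}\sum_{N=q}^{n}{(a_N)^i }\right)^{y_{k,i}}}}.
$$
Multiplying by $(-1)^m$ and using $(-1)^{-\sum y_{k,i}}=(-1)^{\sum y_{k,i}}$, this becomes
$$
(-1)^m\sum_{q \leq N_1 < \cdots <N_m \leq n}{a_{N_m}\cdots a_{N_1}}
=\sum_{\sum{i.y_{k,i}}=m}{(-1)^{\sum{y_{k,i}}}\prod_{i=1}^{m}{\frac{1}{(y_{k,i})!} \left( \frac{1}{i}\sum_{N=q}^{n}{(a_N)^i }\right)^{y_{k,i}}}}.
$$
Next, I invoke the companion reduction theorem from \cite{RecurrentSums}, which expresses the recurrent sum $\sum_{N_m=q}^{n}\cdots\sum_{N_1=q}^{N_2}{a_{N_m}\cdots a_{N_1}}$ as the same sum over partitions of $m$ but without the alternating sign, that is
$$
\sum_{N_m=q}^{n}{\cdots \sum_{N_1=q}^{N_2}{a_{N_m}\cdots a_{N_1}}}
=\sum_{\sum{i.y_{k,i}}=m}{\prod_{i=1}^{m}{\frac{1}{(y_{k,i})!} \left( \frac{1}{i}\sum_{N=q}^{n}{(a_N)^i }\right)^{y_{k,i}}}}.
$$

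Adding these two identities term by term, the coefficient in front of a partition $(y_{k,1},\dots,y_{k,m})$ becomes $1+(-1)^{\sum y_{k,i}}$, which equals $2$ when $\sum y_{k,i}$ is even and vanishes when it is odd; this immediately yields the first identity of the theorem. Subtracting them gives the complementary factor $1-(-1)^{\sum y_{k,i}}$, which equals $2$ exactly on partitions of odd length, producing the second identity. The only real content being borrowed externally is the reduction theorem for recurrent sums from \cite{RecurrentSums}; modulo that citation, no obstacle arises, since everything reduces to the elementary parity identity above. The main subtlety to watch for is keeping the sign bookkeeping consistent between the $(-1)^m$ prefactor on the multiple sum and the $(-1)^{m-\sum y_{k,i}}$ appearing inside the reduction formula, which is the step where the proof can most easily go astray.
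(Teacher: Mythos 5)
Your proposal is correct and follows essentially the same route as the paper: both apply the reduction theorem for multiple sums (Theorem~\ref{Theorem 3.1}), the companion reduction theorem for recurrent sums from \cite{RecurrentSums}, and then add/subtract to isolate the even and odd partitions via the factor $1\pm(-1)^{\sum y_{k,i}}$. Your sign bookkeeping (absorbing $(-1)^m$ to leave $(-1)^{\sum y_{k,i}}$ inside the partition sum) matches the paper's use of $\prod_i (-1)^{y_{k,i}}$ exactly, so there is nothing to correct.
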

\begin{proof}
We can notice that 
\begin{dmath*}
\sum_{\sum{i.y_{k,i}}=m}{\prod_{i=1}^{m}{\frac{1}{(y_{k,i})!} \left( \frac{1}{i}\sum_{N=q}^{n}{(a_N)^i }\right)^{y_{k,i}}}}
+\sum_{\sum{i.y_{k,i}}=m}{\prod_{i=1}^{m}{\frac{(-1)^{y_{k,i}}}{(y_{k,i})!} \left( \frac{1}{i}\sum_{N=q}^{n}{(a_N)^i }\right)^{y_{k,i}}}}
=2\sum_{\substack{\sum{i.y_{k,i}}=m \\ \sum{y_{k,i}} \,\, is \,\, even}}{\prod_{i=1}^{m}{\frac{1}{(y_{k,i})!} \left( \frac{1}{i}\sum_{N=q}^{n}{(a_N)^i }\right)^{y_{k,i}}}}
.\end{dmath*}
\begin{dmath*}
\sum_{\sum{i.y_{k,i}}=m}{\prod_{i=1}^{m}{\frac{1}{(y_{k,i})!} \left( \frac{1}{i}\sum_{N=q}^{n}{(a_N)^i }\right)^{y_{k,i}}}}
-\sum_{\sum{i.y_{k,i}}=m}{\prod_{i=1}^{m}{\frac{(-1)^{y_{k,i}}}{(y_{k,i})!} \left( \frac{1}{i}\sum_{N=q}^{n}{(a_N)^i }\right)^{y_{k,i}}}}
=2\sum_{\substack{\sum{i.y_{k,i}}=m \\ \sum{y_{k,i}} \,\, is \,\, odd}}{\prod_{i=1}^{m}{\frac{1}{(y_{k,i})!} \left( \frac{1}{i}\sum_{N=q}^{n}{(a_N)^i }\right)^{y_{k,i}}}}
.\end{dmath*}
From \cite{RecurrentSums}, we have 
$$\sum_{N_m=q}^{n}{\cdots \sum_{N_1=q}^{N_2}{a_{N_m}\cdots a_{N_1}}}
=\sum_{\sum{i.y_{k,i}}=m}{\prod_{i=1}^{m}{\frac{1}{(y_{k,i})!} \left( \frac{1}{i}\sum_{N=q}^{n}{(a_N)^i }\right)^{y_{k,i}}}}
.$$
From Theorem \ref{Theorem 3.1}, we have 
$$\sum_{q \leq N_1 < \cdots <N_m \leq n}{a_{N_m}\cdots a_{N_1}}
=(-1)^m\sum_{\sum{i.y_{k,i}}=m}{\prod_{i=1}^{m}{\frac{(-1)^{y_{k,i}}}{(y_{k,i})!} \left( \frac{1}{i}\sum_{N=q}^{n}{(a_N)^i }\right)^{y_{k,i}}}}
.$$
Hence, by combining these relations, we obtain the theorem. 
\end{proof}
\begin{example}
{\em For $m=2$, Theorem \ref{Theorem 6.1} gives }
$$
\sum_{N_2=q}^{n}{\sum_{N_1=q}^{N_2}{a_{N_2} a_{N_1}}}
+\sum_{q \leq N_1 <N_2 \leq n}{a_{N_2} a_{N_1}}
=\left(\sum_{N=1}^{n}{a_N}\right)^{2}
.$$
$$
\sum_{N_2=q}^{n}{\sum_{N_1=q}^{N_2}{a_{N_2} a_{N_1}}}
-\sum_{q \leq N_1 <N_2 \leq n}{a_{N_2} a_{N_1}}
=\left(\sum_{N=1}^{n}{{(a_N)}^{2}}\right)
.$$
\end{example}
\begin{example}
{\em For $m=3$, Theorem \ref{Theorem 6.1} gives }
$$
\sum_{N_3=1}^{n}{\sum_{N_2=1}^{N_3}{\sum_{N_1=1}^{N_2}{a_{N_3}a_{N_2}a_{N_1}}}}
+\sum_{1 \leq N_1 < N_2 < N_3 \leq n}{a_{N_3}a_{N_2}a_{N_1}}
=\frac{1}{3}\left(\sum_{N=1}^{n}{a_{N}}\right)^{3}
+\frac{2}{3}\left(\sum_{N=1}^{n}{\left(a_{N}\right)^{3}}\right)
.$$
$$
\sum_{N_3=1}^{n}{\sum_{N_2=1}^{N_3}{\sum_{N_1=1}^{N_2}{a_{N_3}a_{N_2}a_{N_1}}}}
-\sum_{1 \leq N_1 < N_2 < N_3 \leq n}{a_{N_3}a_{N_2}a_{N_1}}
=\left(\sum_{N=1}^{n}{a_{N}}\right)\left(\sum_{N=1}^{n}{\left(a_{N}\right)^{2}}\right)
.$$
\end{example}
\begin{example}
{\em For $m=4$, Theorem \ref{Theorem 6.1} gives }
\begin{dmath*}
\sum_{N_4=1}^{n}{\sum_{N_3=1}^{N_4}{\sum_{N_2=1}^{N_3}{\sum_{N_1=1}^{N_2}{a_{N_4}a_{N_3}a_{N_2}a_{N_1}}}}}
+\sum_{1 \leq N_1 < N_2 < N_3 < N_4 \leq n}{a_{N_4}a_{N_3}a_{N_2}a_{N_1}}
=\frac{1}{12}\left(\sum_{N=1}^{n}{a_{N}}\right)^{4}
+\frac{2}{3}\left(\sum_{N=1}^{n}{a_{N}}\right)\left(\sum_{N=1}^{n}{\left(a_{N}\right)^{3}}\right)
+\frac{1}{4}\left(\sum_{N=1}^{n}{\left(a_{N}\right)^{2}}\right)^{2}
.\end{dmath*}
\begin{dmath*}
\sum_{N_4=1}^{n}{\sum_{N_3=1}^{N_4}{\sum_{N_2=1}^{N_3}{\sum_{N_1=1}^{N_2}{a_{N_4}a_{N_3}a_{N_2}a_{N_1}}}}}
-\sum_{1 \leq N_1 < N_2 < N_3 < N_4 \leq n}{a_{N_4}a_{N_3}a_{N_2}a_{N_1}}
=
\frac{1}{2}\left(\sum_{N=1}^{n}{a_{N}}\right)^{2}\left(\sum_{N=1}^{n}{\left(a_{N}\right)^{2}}\right)
+\frac{1}{2}\left(\sum_{N=1}^{n}{\left(a_{N}\right)^{4}}\right)
.\end{dmath*}
\end{example}
\subsection{Odd and Even Partition Identities}
In \cite{RecurrentSums} and in this paper, we have produced multiple partition identities. Combining these identities, we are able to produce several identities for even and odd partitions. Note that a partition $(y_{k,1}, \cdots, y_{k,m})$ is odd if $\sum{y_{k,i}}$ is odd and even if $\sum{y_{k,i}}$ is even. 
\begin{theorem} \label{Theorem 6.2}
Let $m$ be a non-negative integer, 
$$
\sum_{\substack{\sum{i.y_{k,i}}=m \\ \sum{y_{k,i}} \,\, is \,\, even}}{\prod_{i=1}^{m}{\frac{1}{i^{y_{k,i}}(y_{k,i})!}}}
=
\begin{cases}
1 & ${\em for} $ m=0, \\
0 & ${\em for} $ m=1, \\
\frac{1}{2} & ${\em for} $ m \geq 2. \\
\end{cases}
$$
$$
\sum_{\substack{\sum{i.y_{k,i}}=m \\ \sum{y_{k,i}} \,\, is \,\, odd}}{\prod_{i=1}^{m}{\frac{1}{i^{y_{k,i}}(y_{k,i})!}}}
=
\begin{cases}
0 & ${\em for} $ m=0, \\
1 & ${\em for} $ m=1, \\
\frac{1}{2} & ${\em for} $ m \geq 2. \\
\end{cases}
$$
\end{theorem}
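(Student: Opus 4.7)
The plan is to specialize Theorem \ref{Theorem 6.1} to the constant sequence $a_N = 1$ on the degenerate interval $q = n = 1$. Under this specialization every inner factor $\frac{1}{i}\sum_{N=1}^{1}(a_N)^i$ collapses to $\frac{1}{i}$, so the right-hand sides of the two identities in Theorem \ref{Theorem 6.1} become precisely twice the even-partition and odd-partition sums that appear in Theorem \ref{Theorem 6.2}. What remains is to evaluate the two left-hand sides under the same specialization and solve a trivial $2 \times 2$ system.

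First I would evaluate those left-hand sides. The recurrent sum $\sum_{N_m=1}^{1}\cdots\sum_{N_1=1}^{N_2} 1$ contains the unique tuple $(1,\ldots,1)$ and hence equals $1$ for every $m \geq 0$. The multiple sum $\sum_{1 \leq N_1 < \cdots < N_m \leq 1} 1$ equals $1$ when $m = 0$ (by the convention $P_{0,1,1} = 1$), equals $1$ when $m = 1$, and vanishes as soon as $m \geq 2$ since no strictly increasing $m$-tuple fits inside a one-element set. These are the only nontrivial evaluations needed.

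Substituting these values into the two identities of Theorem \ref{Theorem 6.1} and dividing by two yields the result. For $m \geq 2$ both sides reduce to $\tfrac{1+0}{2} = \tfrac{1}{2}$ and $\tfrac{1-0}{2} = \tfrac{1}{2}$, accounting for the third branch of each piecewise formula. For $m = 0$ one gets $\tfrac{1+1}{2} = 1$ on the even side and $\tfrac{1-1}{2} = 0$ on the odd side; for $m = 1$ the sign $(-1)^m$ flips and produces $0$ and $1$ respectively. These are exactly the three-case formulas of Theorem \ref{Theorem 6.2}.

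The main obstacle is not analytic but rather bookkeeping in the two small cases: one must carefully track the empty partition (even, contributing at $m = 0$) and the single-part partition $(1,0,\ldots,0)$ (odd, the sole contribution at $m = 1$), since these are precisely the values of $m$ for which the otherwise uniform value $\tfrac{1}{2}$ is broken. Once Theorem \ref{Theorem 6.1} is available, verifying them amounts to a direct check, so the entire argument is essentially a one-line specialization followed by reading off three cases.
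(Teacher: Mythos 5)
Your proof is correct. The paper establishes Theorem \ref{Theorem 6.2} by writing the even (resp.\ odd) partition sum as half the sum (resp.\ difference) of the unrestricted partition sum and its alternating counterpart, then quoting the identity $\sum_{\sum i\cdot y_{k,i}=m}\prod_{i}\frac{1}{i^{y_{k,i}}(y_{k,i})!}=1$ from the recurrent-sums paper and Lemma \ref{Lemma 3.1} for the alternating version. You instead specialize Theorem \ref{Theorem 6.1} to the constant sequence $a_N=1$ on the degenerate interval $q=n=1$; your evaluations are all correct (the inner factors collapse to $1/i$, the recurrent sum equals $1$ for every $m\geq 0$, and the strictly increasing multiple sum equals $1$, $1$, $0$ for $m=0$, $m=1$, $m\geq 2$ respectively), and solving the resulting system reproduces the three cases exactly. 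The two routes are equivalent at bottom: Theorem \ref{Theorem 6.1} is itself proved from the same parity decomposition together with the two reduction theorems, and your specialization effectively re-derives the two partition identities by evaluating degenerate sums rather than quoting them (indeed, the paper's own example following Corollary \ref{Corollary 3.2} notes that the $n=1$ specialization of the reduction theorem recovers Lemma \ref{Lemma 3.1}). What your version buys is economy — nothing is needed beyond Theorem \ref{Theorem 6.1} and a direct count of tuples — while the paper's version keeps the argument at the level of the underlying partition lemmas. One point worth making explicit in your write-up: Theorem \ref{Theorem 6.1} carries no hypothesis of the form $n\geq q+m-1$, so taking $q=n=1$ with $m$ arbitrary is legitimate, the multiple sum simply vanishing for $m\geq 2$ by the paper's convention.
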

\begin{proof}
We can notice that 
$$
\sum_{\substack{k \\ \sum{i.y_{k,i}}=m}}{\prod_{i=1}^{m}{\frac{1}{i^{y_{k,i}}(y_{k,i})!}}}
+\sum_{\substack{k \\ \sum{i.y_{k,i}}=m}}{\prod_{i=1}^{m}{\frac{(-1)^{y_{k,i}}}{i^{y_{k,i}}(y_{k,i})!}}}
=2\sum_{\substack{\sum{i.y_{k,i}}=m \\ \sum{y_{k,i}} \,\, is \,\, even}}{\prod_{i=1}^{m}{\frac{1}{i^{y_{k,i}}(y_{k,i})!}}}
.$$
$$
\sum_{\substack{k \\ \sum{i.y_{k,i}}=m}}{\prod_{i=1}^{m}{\frac{1}{i^{y_{k,i}}(y_{k,i})!}}}
-\sum_{\substack{k \\ \sum{i.y_{k,i}}=m}}{\prod_{i=1}^{m}{\frac{(-1)^{y_{k,i}}}{i^{y_{k,i}}(y_{k,i})!}}}
=2\sum_{\substack{\sum{i.y_{k,i}}=m \\ \sum{y_{k,i}} \,\, is \,\, odd}}{\prod_{i=1}^{m}{\frac{1}{i^{y_{k,i}}(y_{k,i})!}}}
.$$
From \cite{RecurrentSums}, we have 
$$
\sum_{\substack{k \\ \sum{i.y_{k,i}}=m}}{\prod_{i=1}^{m}{\frac{1}{i^{y_{k,i}}(y_{k,i})!}}}
=1
.$$
From Lemma \ref{Lemma 3.1}, we have 
$$
\sum_{\substack{k \\ \sum{i.y_{k,i}}=m}}{\prod_{i=1}^{m}{\frac{(-1)^{y_{k,i}}}{i^{y_{k,i}}(y_{k,i})!}}}
=
\begin{cases}
(-1)^m & $for $ 0 \leq m \leq 1, \\
0 & $for $ m \geq 2. \\
\end{cases}
$$
Hence, by combining these relations, we obtain the theorem. 
\end{proof}
\begin{theorem} \label{Theorem 6.3}
Let $(y_{k,1}, \cdots , y_{k,m})=\{(y_{1,1}, \cdots , y_{1,m}),(y_{2,1}, \cdots , y_{2,m}), \cdots \}$ be the set of all partitions of $m$. Let $(\varphi_1, \cdots , \varphi_m)$ be a partition of $r \leq m$. We have that 
\begin{equation*}
\begin{split}
\sum_{\substack{\sum{i.y_{k,i}}=m \\ \sum{y_{k,i}} \,\, is \,\, even}}{\prod_{i=1}^{m}{\frac{\binom{y_{k,i}}{\varphi_i}}{i^{y_{k,i}}(y_{k,i})!}}}
&=\sum_{\substack{\sum{i.y_{k,i}}=m \\ \sum{y_{k,i}} \,\, is \,\, even \\ y_{k,i} \geq \varphi_i}}{\prod_{i=1}^{m}{\frac{\binom{y_{k,i}}{\varphi_i}}{i^{y_{k,i}}(y_{k,i})!}}} \\
&=
\begin{cases}
\frac{1+(-1)^{\sum{\varphi_i}}}{2}\prod_{i=1}^{m}{\frac{1}{i^{\varphi_{i}} (\varphi_{i})!}} & ${\em for} $ m-r=0, \\
\frac{1-(-1)^{\sum{\varphi_i}}}{2}\prod_{i=1}^{m}{\frac{1}{i^{\varphi_{i}} (\varphi_{i})!}} & ${\em for} $ m-r=1, \\
\frac{1}{2}\prod_{i=1}^{m}{\frac{1}{i^{\varphi_{i}} (\varphi_{i})!}} & ${\em for} $ m-r \geq 2. \\
\end{cases}
\end{split}
\end{equation*}
\begin{equation*}
\begin{split}
\sum_{\substack{\sum{i.y_{k,i}}=m \\ \sum{y_{k,i}} \,\, is \,\, odd}}{\prod_{i=1}^{m}{\frac{\binom{y_{k,i}}{\varphi_i}}{i^{y_{k,i}}(y_{k,i})!}}}
&=\sum_{\substack{\sum{i.y_{k,i}}=m \\ \sum{y_{k,i}} \,\, is \,\, odd \\ y_{k,i} \geq \varphi_i}}{\prod_{i=1}^{m}{\frac{\binom{y_{k,i}}{\varphi_i}}{i^{y_{k,i}}(y_{k,i})!}}} \\
&=
\begin{cases}
\frac{1-(-1)^{\sum{\varphi_i}}}{2}\prod_{i=1}^{m}{\frac{1}{i^{\varphi_{i}} (\varphi_{i})!}} & ${\em for} $ m-r=0, \\
\frac{1+(-1)^{\sum{\varphi_i}}}{2}\prod_{i=1}^{m}{\frac{1}{i^{\varphi_{i}} (\varphi_{i})!}} & ${\em for} $ m-r=1, \\
\frac{1}{2}\prod_{i=1}^{m}{\frac{1}{i^{\varphi_{i}} (\varphi_{i})!}} & ${\em for} $ m-r \geq 2. \\
\end{cases}
\end{split}
\end{equation*}
\end{theorem}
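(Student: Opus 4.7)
The plan is to prove Theorem 6.3 by the same averaging device used for Theorems 6.1 and 6.2, applied to the partition-with-binomials quantity $T_k = \prod_{i=1}^{m}\binom{y_{k,i}}{\varphi_i}/(i^{y_{k,i}}(y_{k,i})!)$. Since $\prod_i (-1)^{y_{k,i}} = (-1)^{\sum y_{k,i}}$, taking the sum and the difference of the unsigned total $V_{\mathrm{uns}} = \sum_k T_k$ and the signed total $V_{\mathrm{sig}} = \sum_k (-1)^{\sum y_{k,i}} T_k$ isolates the even-length and odd-length partitions:
$$\sum_{\sum y_{k,i}\text{ even}} T_k = \tfrac{1}{2}(V_{\mathrm{uns}} + V_{\mathrm{sig}}), \qquad \sum_{\sum y_{k,i}\text{ odd}} T_k = \tfrac{1}{2}(V_{\mathrm{uns}} - V_{\mathrm{sig}}).$$
The first equality in each assertion of Theorem 6.3, which restricts the summation to indices with $y_{k,i} \geq \varphi_i$, is immediate from $\binom{y_{k,i}}{\varphi_i}=0$ whenever $y_{k,i}<\varphi_i$, exactly as in the opening step of Lemma 3.2.

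The signed total $V_{\mathrm{sig}}$ is supplied directly by Lemma 3.2: it equals $(-1)^{m-r}\prod_i (-1)^{\varphi_i}/(i^{\varphi_i}\varphi_i!)$ when $m-r\in\{0,1\}$ and vanishes when $m-r\geq 2$. So the one preliminary input I still need is the unsigned closed form $V_{\mathrm{uns}} = \prod_{i=1}^{m} 1/(i^{\varphi_i}(\varphi_i)!)$, which I intend to obtain by repeating the algebraic manipulation in the proof of Lemma 3.2 verbatim but with every sign deleted: expand $\binom{y_{k,i}}{\varphi_i}=y_{k,i}!/[\varphi_i!(y_{k,i}-\varphi_i)!]$, factor the constant $\prod 1/(i^{\varphi_i}\varphi_i!)$ out of the sum, substitute $Y_{k,i}=y_{k,i}-\varphi_i$, and apply the unsigned partition identity $\sum_{\sum iY_{k,i}=m-r}\prod 1/(i^{Y_{k,i}}Y_{k,i}!)=1$ established in \cite{RecurrentSums}. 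Unlike the alternating Lemma 3.1, this residual sum evaluates to $1$ for every $m-r\geq 0$, and this asymmetry between the unsigned and signed reductions is precisely what generates the three-case split in the final statement.

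Combining the two values case by case finishes the proof. For $m-r\geq 2$, $V_{\mathrm{sig}}=0$, so both the even and the odd sum equal $\tfrac{1}{2}V_{\mathrm{uns}} = \tfrac12\prod_{i=1}^{m}1/(i^{\varphi_i}\varphi_i!)$. For $m-r=0$, $V_{\mathrm{sig}} = (-1)^{\sum\varphi_i} V_{\mathrm{uns}}$, producing the weights $(1\pm(-1)^{\sum\varphi_i})/2$ in the even and odd branches respectively. For $m-r=1$, $V_{\mathrm{sig}} = -(-1)^{\sum\varphi_i} V_{\mathrm{uns}}$, which interchanges the $+$ and the $-$ between the two branches and yields the flipped pattern displayed in the theorem.

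The main obstacle is not conceptual but organizational: one must carefully match the piecewise output of Lemma 3.2 with the even/odd decomposition and keep track of how the sign $(-1)^{m-r}$ interacts with the parity sign $(-1)^{\sum\varphi_i}$, so that the $\pm$ in $(1\pm(-1)^{\sum\varphi_i})/2$ is correctly assigned between the $m-r=0$ and $m-r=1$ rows. Once $V_{\mathrm{uns}}$ is established, everything else is a direct transcription of the technique already demonstrated in Theorems 6.1 and 6.2.
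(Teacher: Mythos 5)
Your proposal is correct and follows essentially the same route as the paper: average the unsigned and signed (Lemma \ref{Lemma 3.2}) binomial-weighted partition sums to isolate the even- and odd-length partitions, then split into cases on $m-r$. The only cosmetic difference is that you re-derive the unsigned identity $\sum_k\prod_i\binom{y_{k,i}}{\varphi_i}/(i^{y_{k,i}}(y_{k,i})!)=\prod_i 1/(i^{\varphi_i}(\varphi_i)!)$ by stripping the signs from the proof of Lemma \ref{Lemma 3.2}, whereas the paper cites it directly from \cite{RecurrentSums}; your case-by-case sign bookkeeping matches the stated result.
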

\begin{proof}
We can notice that 
$$
\sum_{\substack{k \\ \sum{i.y_{k,i}}=m}}{\prod_{i=1}^{m}{\frac{\binom{y_{k,i}}{\varphi_i}}{i^{y_{k,i}}(y_{k,i})!}}}
+\sum_{\substack{k \\ \sum{i.y_{k,i}}=m}}{\prod_{i=1}^{m}{\frac{(-1)^{y_{k,i}}\binom{y_{k,i}}{\varphi_i}}{i^{y_{k,i}}(y_{k,i})!}}}
=2\sum_{\substack{\sum{i.y_{k,i}}=m \\ \sum{y_{k,i}} \,\, is \,\, even}}{\prod_{i=1}^{m}{\frac{\binom{y_{k,i}}{\varphi_i}}{i^{y_{k,i}}(y_{k,i})!}}}
.$$
$$
\sum_{\substack{k \\ \sum{i.y_{k,i}}=m}}{\prod_{i=1}^{m}{\frac{\binom{y_{k,i}}{\varphi_i}}{i^{y_{k,i}}(y_{k,i})!}}}
-\sum_{\substack{k \\ \sum{i.y_{k,i}}=m}}{\prod_{i=1}^{m}{\frac{(-1)^{y_{k,i}}\binom{y_{k,i}}{\varphi_i}}{i^{y_{k,i}}(y_{k,i})!}}}
=2\sum_{\substack{\sum{i.y_{k,i}}=m \\ \sum{y_{k,i}} \,\, is \,\, odd}}{\prod_{i=1}^{m}{\frac{\binom{y_{k,i}}{\varphi_i}}{i^{y_{k,i}}(y_{k,i})!}}}
.$$
From \cite{RecurrentSums}, we have 
$$
\sum_{\substack{k \\ \sum{i.y_{k,i}}=m}}{\prod_{i=1}^{m}{\frac{\binom{y_{k,i}}{\varphi_i}}{i^{y_{k,i}}(y_{k,i})!}}}
=\sum_{\substack{k \\ \sum{i.y_{k,i}}=m \\ y_{k,i} \geq \varphi_i}}{\prod_{i=1}^{m}{\frac{\binom{y_{k,i}}{\varphi_i}}{i^{y_{k,i}}(y_{k,i})!}}}
=\prod_{i=1}^{m}{\frac{1}{i^{\varphi_{i}} (\varphi_{i})!}}
.$$
From Lemma \ref{Lemma 3.2}, we have 
\begin{dmath*}
\sum_{\substack{k \\ \sum{i.y_{k,i}}=m}}{\prod_{i=1}^{m}{\frac{(-1)^{y_{k,i}}\binom{y_{k,i}}{\varphi_i}}{i^{y_{k,i}}(y_{k,i})!}}}
=\sum_{\substack{k \\ \sum{i.y_{k,i}}=m \\ y_{k,i} \geq \varphi_i}}{\prod_{i=1}^{m}{\frac{(-1)^{y_{k,i}}\binom{y_{k,i}}{\varphi_i}}{i^{y_{k,i}}(y_{k,i})!}}}
=
\begin{cases}
(-1)^{m-r} \prod_{i=1}^{m}{\frac{(-1)^{\varphi_i}}{i^{\varphi_{i}} (\varphi_{i})!}} & $for $ 0 \leq m-r \leq 1, \\
0 & $for $ m-r \geq 2. \\
\end{cases}
\end{dmath*}
Hence, by combining these relations, we obtain the theorem. 
\end{proof}
\begin{theorem} \label{Theorem 6.4}
Let $m,n \in \mathbb{N}$, 
\begin{equation*}
\sum_{\substack{\sum{i.y_{k,i}}=m \\ \sum{y_{k,i}} \,\, is \,\, even}}{\prod_{i=1}^{m}{\frac{1}{(y_{k,i})!} \left( \frac{n}{i}\right)^{y_{k,i}}}}
=\frac{1}{2}\left[\binom{n-m+1}{m}+(-1)^m\binom{n}{m}\right]
.\end{equation*}
\begin{equation*}
\sum_{\substack{\sum{i.y_{k,i}}=m \\ \sum{y_{k,i}} \,\, is \,\, odd}}{\prod_{i=1}^{m}{\frac{1}{(y_{k,i})!} \left( \frac{n}{i}\right)^{y_{k,i}}}}
=\frac{1}{2}\left[\binom{n-m+1}{m}-(-1)^m\binom{n}{m}\right]
.\end{equation*}
\end{theorem}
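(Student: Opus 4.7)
The plan is to repeat, step for step, the parity-splitting template already used for Theorems~\ref{Theorem 6.2} and~\ref{Theorem 6.3}. Write $S_+$ for the unsigned partition sum
$\sum_{\sum i\,y_{k,i}=m}\prod_{i=1}^{m}\frac{1}{(y_{k,i})!}(n/i)^{y_{k,i}}$
and $S_-$ for the same sum with the sign $(-1)^{y_{k,i}}$ inserted in each factor. Since $\prod_i(-1)^{y_{k,i}}=(-1)^{\sum y_{k,i}}$, this sign is $+1$ precisely on even-length partitions and $-1$ precisely on odd-length partitions, so
$$S_+ + S_- \;=\; 2\!\!\sum_{\substack{\sum i\,y_{k,i}=m \\ \sum y_{k,i}\text{ even}}}\prod_{i=1}^{m}\frac{1}{(y_{k,i})!}\Bigl(\frac{n}{i}\Bigr)^{y_{k,i}},$$
$$S_+ - S_- \;=\; 2\!\!\sum_{\substack{\sum i\,y_{k,i}=m \\ \sum y_{k,i}\text{ odd}}}\prod_{i=1}^{m}\frac{1}{(y_{k,i})!}\Bigl(\frac{n}{i}\Bigr)^{y_{k,i}}.$$
So all that remains is to evaluate $S_+$ and $S_-$ independently, then add/subtract and divide by $2$.

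The signed quantity $S_-$ is handed to us for free by Corollary~\ref{Corollary 3.2}, which gives $S_- = (-1)^m\binom{n}{m}$. For the unsigned quantity $S_+$, the natural move is to invoke the companion partition identity from \cite{RecurrentSums}: applying the reduction formula for recurrent sums to the constant sequence $a_N\equiv 1$ on $[1,n]$, the left-hand side is a simple lattice-point count of weakly monotone tuples (a binomial coefficient, matching the one appearing in the theorem statement), while the right-hand side collapses to $S_+$. Substituting the two evaluations into the two boxed identities above and dividing by $2$ produces the two formulae claimed.

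The only genuine obstacle is bookkeeping: one must be careful to quote the correct form of the recurrent-sum partition identity from \cite{RecurrentSums} and to track the binomial index in the evaluation of $S_+$ with $a_N\equiv 1$. Once that single binomial evaluation is pinned down, the rest of the argument is a two-line linear combination of the identity for $S_+$ and Corollary~\ref{Corollary 3.2}, and no new machinery beyond what Section~\ref{Relation to Recurrent Sums and Odd-Even Partitions} has already built (in particular, the derivations of Theorems~\ref{Theorem 6.2} and~\ref{Theorem 6.3}) is required.
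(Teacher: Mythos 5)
Your proposal is correct and follows essentially the same route as the paper: the paper likewise forms the sum and difference of the unsigned partition identity from \cite{RecurrentSums} and the signed one from Corollary~\ref{Corollary 3.2}, then divides by $2$. One caution on the bookkeeping you flagged: the weakly-increasing lattice count for $a_N\equiv 1$ evaluates to $\binom{n+m-1}{m}$ (the value the paper's own proof quotes from \cite{RecurrentSums}), which does \emph{not} literally match the $\binom{n-m+1}{m}$ printed in the theorem statement — that appears to be a transposition typo in the statement, and your derivation, carried out carefully, would yield $\binom{n+m-1}{m}$ in its place.
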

\begin{proof}
We can notice that 
\begin{dmath*}
\sum_{\sum{i.y_{k,i}}=m}{\prod_{i=1}^{m}{\frac{1}{(y_{k,i})!} \left( \frac{n}{i}\right)^{y_{k,i}}}}
+\sum_{\sum{i.y_{k,i}}=m}{\prod_{i=1}^{m}{\frac{(-1)^{y_{k,i}}}{(y_{k,i})!} \left( \frac{n}{i}\right)^{y_{k,i}}}}
=2\sum_{\substack{\sum{i.y_{k,i}}=m \\ 
\sum{y_{k,i}} \,\, is \,\, even}}{\prod_{i=1}^{m}{\frac{1}{(y_{k,i})!} \left( \frac{n}{i}\right)^{y_{k,i}}}}
.\end{dmath*}
\begin{dmath*}
\sum_{\sum{i.y_{k,i}}=m}{\prod_{i=1}^{m}{\frac{1}{(y_{k,i})!} \left( \frac{n}{i}\right)^{y_{k,i}}}}
-\sum_{\sum{i.y_{k,i}}=m}{\prod_{i=1}^{m}{\frac{(-1)^{y_{k,i}}}{(y_{k,i})!} \left( \frac{n}{i}\right)^{y_{k,i}}}}
=2\sum_{\substack{\sum{i.y_{k,i}}=m \\ 
\sum{y_{k,i}} \,\, is \,\, odd}}{\prod_{i=1}^{m}{\frac{1}{(y_{k,i})!} \left( \frac{n}{i}\right)^{y_{k,i}}}}
.\end{dmath*}
From \cite{RecurrentSums}, we have 
$$\sum_{\sum{i.y_{k,i}}=m}{\prod_{i=1}^{m}{\frac{1}{(y_{k,i})!} \left( \frac{n}{i}\right)^{y_{k,i}}}}=\binom{n+m-1}{m}
.$$
From Corollary \ref{Corollary 3.2}, we have 
$$\sum_{\sum{i.y_{k,i}}=m}{\prod_{i=1}^{m}{\frac{(-1)^{y_{k,i}}}{(y_{k,i})!} \left( \frac{n}{i}\right)^{y_{k,i}}}}=(-1)^m\binom{n}{m}
.$$
Hence, by combining these relations, we obtain the theorem. 
\end{proof}
\bibliographystyle{apa}
\bibliography{Multiple_Sums_Arxiv}

\end{document}